\documentclass{amsart}

\usepackage{enumitem}

\usepackage{amssymb,amsmath,latexsym,mathtools,tensor}

\usepackage{graphicx}
\usepackage[font=small,labelfont=bf]{caption} 

\usepackage{etoolbox}

\usepackage{booktabs}

\usepackage{hyperref}

\expandafter\def\expandafter\normalsize\expandafter{%
    \normalsize
    \setlength\abovedisplayskip{10pt}
    \setlength\belowdisplayskip{10pt}
    \setlength\abovedisplayshortskip{10pt}
    \setlength\belowdisplayshortskip{10pt}
}

\makeatletter

\usepackage{epigraph}

\setlength\epigraphwidth{.7\textwidth}
\patchcmd{\epigraph}{\@epitext{#1}}{\itshape\@epitext{#1}}{}{}
\makeatother

\everymath=\expandafter{\the\everymath\displaystyle}

\usepackage{tikz-cd}

\usepackage{tikz}
\usetikzlibrary{fit}

\usepackage{blkarray}

\usetikzlibrary{patterns}
\usetikzlibrary{calc}

\pgfdeclarelayer{edgelayer}
\pgfdeclarelayer{nodelayer}
\pgfsetlayers{edgelayer,nodelayer,main}

\newtheorem{theorem}{Theorem}[section]
\newtheorem{lemma}[theorem]{Lemma}
\newtheorem{proposition}[theorem]{Proposition}
\newtheorem{corollary}[theorem]{Corollary}
\newtheorem{definition}[theorem]{Definition}
\newtheorem{example}[theorem]{Example}

\newcommand{\image}{{\rm{im} }\,}
\newcommand{\coimage}{{\rm{coim} }\,}
\newcommand{\kernel}{{\rm{ker} }\,}
\newcommand{\cokernel}{{\rm{coker} }\,}
\newcommand{\rank}{{\rm{rank} }\,}

\newcounter{repsection}
\newcounter{repeat} \numberwithin{repeat}{repsection}
\newtheorem{reptheorem}[repeat]{Theorem}
\newtheorem{repproposition}[repeat]{Proposition}

\setcounter{tocdepth}{2}

\begin{document}

\title{On the Structural Theorem of Persistent Homology}


\author{Killian Meehan}
\address{
         Hiraoka Laboratory\\
		KUIAS, Kyoto University\\
		Kyoto, Japan 606-8501} 
\email[K.~Meehan]{killian.f.meehan@gmail.com}
\author{Andrei Pavlichenko}
\address{Mathematics Department\\
         University of Missouri\\
         202 Math Sci Bldg, MO 65211}
\email[A.~Pavlichneko]{adpmbc@mail.missouri.edu}
\author{Jan Segert}
\address{Mathematics Department\\
         University of Missouri\\
         202 Math Sci Bldg, MO 65211}
\email[J.~Segert]{segertj@missouri.edu}


\maketitle

\begin{abstract}
We study the categorical framework for the computation of persistent homology, 
without reliance on a particular computational algorithm.   
The computation of persistent homology is commonly summarized as a matrix theorem, 
which we  call the  Matrix Structural Theorem.   Any of the various algorithms for computing 
persistent homology constitutes  a {\it constructive} proof  of the Matrix Structural Theorem.  
We show that  the Matrix Structural Theorem is  equivalent to the Krull-Schmidt property 
of the category of filtered chain complexes.   
We separately  establish the Krull-Schmidt property by abstract categorical methods, yielding  a novel  
{\it nonconstructive}  proof of the Matrix Structural Theorem.   
 \par
 	These results provide the foundation for an alternate categorical framework for 
 decomposition in persitent homology, bypassing the usual  persistence vector 
 spaces and quiver representations.   
\end{abstract}

\epigraph{But the power of homology is seldom of much efficacy, except in those happy dispositions where it is almost superfluous.}{\textit{with apologies to Edward Gibbon}}

\tableofcontents

\section{Manifestations of the Structural Theorem}
\label{section1}

\subsection{Introduction}
\label{subsection1.1}
 During the last decade, persistent homology \cite{ELZ,Car} has achieved great success as a  powerful and versatile tool,  
particularly for Topological Data Analysis (TDA) of point clouds.   The term {\it point cloud}  usually means a finite subset of points 
in a Euclidean space ${\mathbb R}^k$, where the dimension $k$ can be large, and the number of points is often very large.      
Many excellent surveys and introductions are available in the literature  
  \cite{Car2,Oudot,EH,Zom,Ghr,Wei}.    {\it Decomposition} plays a central role both in the theory and in the applications of persistent homology.    The ubiquitous ``barcode diagrams" encode  a decomposition  in terms of the 
 types and multiplicities of  indecomposable summands.    This data is an invariant, independent of the choice 
 of decomposition.  
  The summands represented by long barcodes contain important characteristic information, while the summands represented by short barcodes only contain random ``noise" and may be disregarded.    
  A number of ``stability theorems" \cite{CSEH,Oudot} provide a firm foundation for  this intuitively appealing interpretation of the long and short barcode invariants.     
In this paper we consider the interplay between the algorithmic and the categorical underpinnings for decomposition in persistent homology.  

It is helpful to first review  analogous decomposition issues for the much more familiar context of finite-dimensional vector spaces 
(over a fixed field ${\Bbb F}$).    
The ordinary  Gaussian elimination algorithm can construct a basis for a vector space. 
Any choice of basis then constitutes a decomposition of the vector space, 
wherein  the linear span of each basis element is a one-dimensional vector 
space.  The direct sum  of these one-dimensional  summands is canonically identified 
(naturally isomorphic) to the original vector space.   A  
one-dimensional vector space cannot be further decomposed as a  sum of  nonzero (dimensional) summands.   
This means that  one-dimensional vector spaces  are  the {\it indecomposable objects}, in the  category of vector spaces.  Since all one-dimensional vector spaces are mutually isomorphic,  there is just one type of indecomposable (object) 
in the category of vector spaces.  
The familiar  dimension of a vector space is  just the multiplicity  of the indecomposable (one-dimensional) summands in a decomposition, and this multiplicity is an invariant  independent of the choice of decomposition.  

Now setting aside what we know about Gaussian elimination, we ask more abstractly  {\it why} is it that 
any vector space is actually decomposable?   
We first observe that decomposability is a categorical property, since it involves both objects (vector spaces)  and  morphisms (linear maps). 
The theory of {\it Krull-Schmidt categories} \cite{Krause} provides an appropriate, albeit abstract, categorical setting for 
questions of decomposability.   The axioms of 
a Krull-Schmidt category guarantee that every object admits an essentially unique decomposition 
as a finite sum of indecomposable objects.  For the category of vector spaces,      
this essential uniqueness encodes the familiar fact that the dimension is an invariant independent of the choice of decomposition.    
The goal then becomes to verify (and understand) the Krull-Schmidt property for the category of vector spaces.  
A concrete constructive verification of the Krull-Schmidt axioms for the category of vector spaces follows easily 
from basic properties of Gaussian elimination and linearity, but this is more in line with describing {\it how} 
to perform a decomposition rather than {\it why} vector spaces are decomposable.  
Fortunately there is a complementary abstract  tool available.  
A theorem of Atiyah \cite{Atiyah} dating back to the early years of category theory provides 
a very useful criterion for verifying the Krull-Schmidt property of a category.  
For the category of vector spaces, Atiyah's criterion reduces to checking certain 
elementary properties of linear maps.  So Atiyah's theorem 
nonconstructively answers the abstract question of {\it why} any vector space admits a decomposition, 
complementing  our understanding of {\it how} to constructively decompose a given vector space via Gaussian elimination.   

In this paper we consider analogous questions of {\it how} and {\it why}  decomposition works in persistent homology.  
The following picture   
summarizes one common description of the transformation from point cloud data to barcodes invariants: 
\begin{equation*}
\begin{tikzcd}[row sep=large, column sep=1.05cm] 
     \fbox{\begin{tabular}{@{}c@{}}Point\\Cloud\end{tabular}}  
     \ar{r}{Threshold}  & 
     \fbox{\begin{tabular}{@{}c@{}}Filtered\\Simplicial\\Complex\end{tabular}}
     \ar{r}{Boundary} &  
      \fbox{\begin{tabular}{@{}c@{}}Filtered\\Chain\\Complex\end{tabular}}
      \ar{rr}{Homology} & &
       \fbox{\begin{tabular}{@{}c@{}}\bf{Persistence}\\ \bf{Vector}\\ \bf{Space}\end{tabular}} 
       \ar[d,  "\bf{Decomposition}" description]
       \\         &  &  
      & &
       \fbox{\begin{tabular}{@{}c@{}} \sl{Barcodes}\end{tabular}} 
     \end{tikzcd}
\end{equation*}
The initial stages,  going from a point cloud to a filtered chain complex, will be briefly reviewed in Section 1.2 below.   
The primary focus of this paper will be the final stages, going from filtered chain complexes to barcodes.  
At the homology step, the homology functor $H_n$ of the chosen dimension/degree $n$ 
takes a {\it filtered chain complex} (which is a diagram of chain complexes)  to 
 a {\it persistence vector space} (which is a diagram of vector spaces).   
The key final step is to compute  barcode invariants by  decomposition of a persistence vector space. 
An important insight \cite{Car} is that 
persistence vector spaces are quiver representations.      
A concrete consequence is the applicability of decomposition algorithms from quiver representation theory, showing 
{ how} to decompose a persistence vector space and compute the barcodes. 
An abstract consequence is that the appropriate category of quiver representations is Krull-Schmidt 
by Atiyah's theorem, showing {\it why} all of this works.   So the Krull-Schmidt property of persistence 
vector spaces nicely ties together the theoretical and computational aspects.    
But there is one problem with this picture.

The standard computational algorithms for persistent homology \cite{ELZ,ZC1,ZC2} do {\it not} 
 work by decomposing a persistence vector space. 
The following picture  summarizes {\it how}  barcodes are normally computed:  
\begin{equation*}
\begin{tikzcd}[row sep=large, column sep=1.05cm] 
     \fbox{\begin{tabular}{@{}c@{}}Point\\Cloud\end{tabular}}  
     \ar{r}{Threshold}  & 
     \fbox{\begin{tabular}{@{}c@{}}Filtered\\Simplicial\\Complex\end{tabular}}
     \ar{r}{Boundary} &  
      \fbox{\begin{tabular}{@{}c@{}}Filtered\\Chain\\Complex\end{tabular}}
       \ar[d,  "Reduction" description]
      & 
         \\         &  &   \fbox{\begin{tabular}{@{}c@{}}Creators\\and\\Destroyers\end{tabular}}
         \ar{rr}[swap]{Selection} & &
       \fbox{\begin{tabular}{@{}c@{}}\sl{Barcodes}\end{tabular}} 
     \end{tikzcd}
\end{equation*}
The initial stages of the picture, going from a point cloud to a filtered chain complex, are unchanged.  
The key {\it reduction} step  \cite{ELZ,ZC1,ZC2} is the construction of a special type of basis. 
Each  basis element is interpreted as either 
 a {\it creator}  or as a   {\it destroyer} of a homology class. 
The {\it selection} step consists of keeping those creators and destroyers that correspond 
to nonzero barcodes of the desired homology dimension/degree $n$, and 
discarding  the remaining basis elements.    The question remains of {\it why} there should 
exist such algorithms operating 
on filtered complexes, rather than on persistence vector spaces.   

In this paper we provide a categorical framework for the 
standard  persistent homology algorithms, using an equivalence of categories to unify the two pictures above: 
\begin{equation*}
\begin{tikzcd}[row sep=large, column sep=1.05cm] 
     \fbox{\begin{tabular}{@{}c@{}}Point\\Cloud\end{tabular}}  
     \ar{r}{Threshold}  & 
     \fbox{\begin{tabular}{@{}c@{}}Filtered\\Simplicial\\Complex\end{tabular}}
     \ar{r}{Boundary} &  
      \fbox{\begin{tabular}{@{}c@{}} \bf{Filtered}\\ \bf{Chain}\\ \bf{Complex}\end{tabular}}
       \ar[d,  "Congruence" description]
        \ar{rr}{Homology} && 
       \fbox{\begin{tabular}{@{}c@{}} \bf{Persistence}\\ \bf{Vector}\\ \bf{Space}\end{tabular}} 
        \ar[d,  "\bf{Decomposition}" description]
       \ar[dll,  swap, dashed, leftrightarrow, "\bf{Equivalence}" description]
      & 
         \\         &  &   \fbox{\begin{tabular}{@{}c@{}} \bf{Quotient}\\ \bf{Object}\end{tabular}}
         \ar{rr}[swap]{\bf{Decomposition}} & &
       \fbox{\begin{tabular}{@{}c@{}} \sl{Barcodes}\end{tabular}} 
     \end{tikzcd}
\end{equation*}
Our  {\it Categorical Structural Theorem} (Theorem 1.6) is the foundation of the framework.  The theorem 
 asserts that the category of filtered chain complexes is Krull-Schmidt, and provides an intuitive classification of indecomposables.    This leads to an 
 alternate framework for persistent homology, where the barcodes   
 describe the Krull-Schmidt decomposition of an object in a quotient of the category of filtered chain complexes. 
 The barcodes are exactly the same as in the standard framework, because the quotient category is 
 equivalent to the category of persistence vector spaces.  
 This framework gives a unified answer for {\it why} and {\it how} decomposition actually works in  
persistent homology.    We no longer need to rely on the Krull-Schmidt property 
 of the category of persistence vector spaces as an indirect theoretical foundation for decomposition and barcodes,  since 
 we can directly appeal to the Krull-Schmidt property already in the category of filtered chain complexes.

 The Categorical Structural Theorem is the abstract  version of what we  call the 
 {\it Structural Theorem of Persistent Homology}.   
We give a nonconstructive categorical proof of the Categorical Structural Theorem, indirectly using Atiyah's criterion.
Combining the Categorical Sructural Theorem with a classification of 
indecomposable filtered chain complexes then yields a novel {\it nonconstructive} proof of what 
we call the  {\it Matrix Structural Theorem} (Theorem 1.4).  
The Matrix Structural Theorem characterizes 
the output of any of the various standard persistent homology algorithms 
in terms of a matrix factorization rather than the more common 
description in terms  of creators and destroyers for homology.    
In this sense, any of the standard algorithms can be thought of as constituting a 
   {\it constructive} proof of the Matrix Structural Theorem.  
In Appendices B.1 and B.2 we present a detailed mathematical treatment of the matrix reduction approach 
to the  Matrix Structural Theorem.

\subsection{Topological Data Analysis by Example}
\label{prep}
 
 This paper focuses on the final stages of Topological Data Analysis (TDA), going from a filtered chain complex 
 to barcode invariants.  
In this section we present  a simple example to illustrate the stages leading up to the Structural Theorem,  
namely going from a point cloud to a filtered simplicial complex.   A reader familiar with TDA may  skip this section, 
which is similar to material in introductory papers such as \cite{Car,CSEH,Ghr} and textbooks such as \cite{EH,Zom}.     
In our example, we 
use the $\alpha$-complex construction \cite{EH,Ed}, which is suitable for low dimensions.  
We note that for large point clouds in high dimensions,  
the Vietoris-Rips construction \cite{Car2,Oudot}  is often preferable.

\begin{example} The first step is to construct a  Delaunay complex, the second step is to construct a filtration 
of the Delaunay complex.     
%
%
\begin{figure}
 \includegraphics[width=0.45\textwidth]{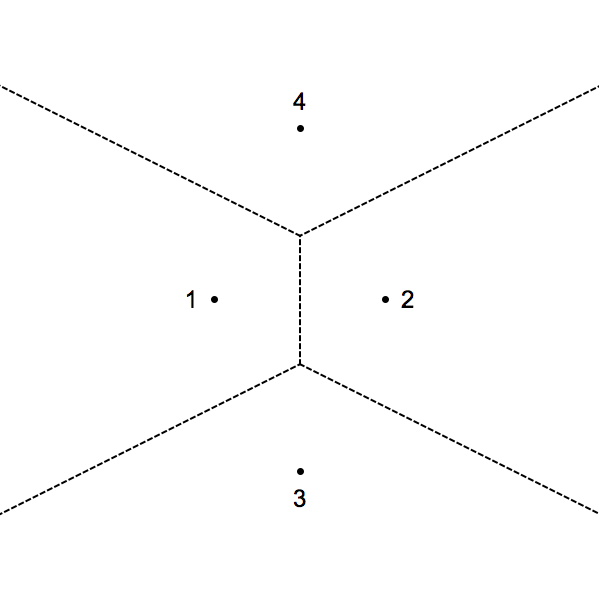}
\caption{A point cloud with its Voronoi cells.}
\label{fig:1}       
\end{figure}
\begin{figure}
 \includegraphics[width=0.45\textwidth]{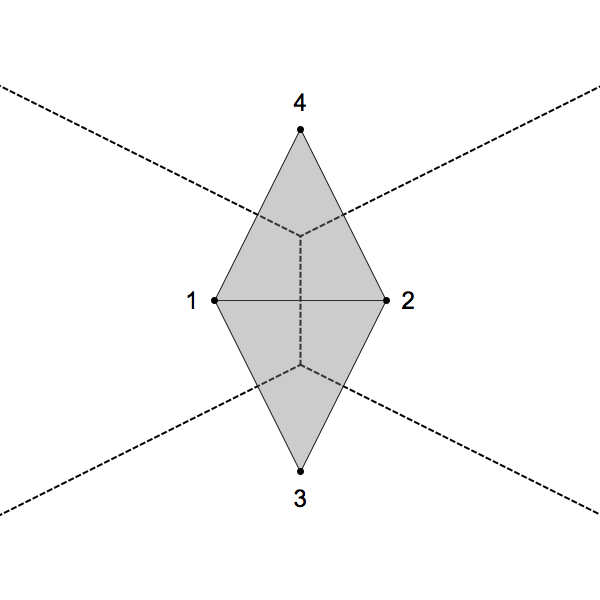}
\caption{Delaunay  Complex. }
\label{fig:4a}       
\end{figure}
We illustrate the construction of the Delaunay simplicial complex associated to a point cloud.
Figure \ref{fig:1} shows a point cloud consisting of four  points in  in ${\Bbb R^2}$ labeled by $n \in \{1,2,3,4 \} $, together 
with the Voronoi cell $V(n)$ of each labeled point.              
We recall \cite{EH} that a Voronoi cell $V(n)$ contains all the   
points $x \in {\Bbb R^2}$  such that $n$ is the closest labeled point to $x$ (or one of the closest if several are equidistant).  
Figure  \ref{fig:4a} shows the Delaunay simplicial complex encoding the intersections of the Voronoi cells.   
We recall that the simplex    
 $[n_0,\dots,n_k]$, where $n_i \in \{ 1,2,3,4 \}$ and  $n_0 <  \dots < n_k$, is 
included in the Delaunay complex iff $V(n_0) \cap \cdots \cap V(n_k) \ne \emptyset$.  
For example, the simplex $[1,2]$ is included because $V(1) \cap V(2) \ne \emptyset$, but 
the simplex $[3,4]$ is not included because $V(3) \cap V(4) = \emptyset$.

The $\alpha$ construction assigns to each Delaunay simplex  $[n_0,\dots,n_k]$  a 
 real nonnegative ``birth parameter" $b([n_0,\dots,n_k])$.    
Let $B_r(n)$ denote the closed ball of radius $r$ centered at the labeled point $n$, 
and consider the subset  $A_r(n )= B_r(n) \cap V(n)$  of the Voronoi cell $V(n)$. 
The birth parameter of the Delaunay simplex $[n_0,\dots,n_k]$ is defined to be 
the smallest value of $r$ such that $A_r(n_0) \cap \cdots \cap A_r(n_k) \ne \emptyset$. 
A value of $r$ is called a``threshold' if it is the birth parameter for some Delaunay simplex.  
The integer ``level" $p$ indexes the thresholds in increasing order, 
as illustrated in Figures \ref{fig:1rep} through \ref{fig:4}:  
\begin{figure}
 \includegraphics[width=0.45\textwidth]{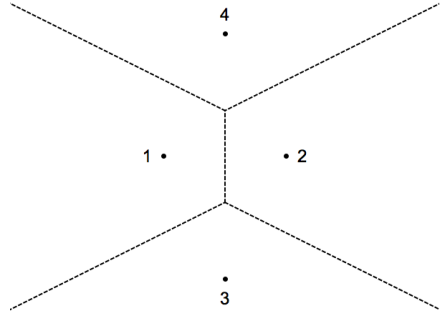}
\caption{Level $p =1$ is the threshold $r= 0 = b([1])=b([2]) =b([3]) = b([4])$. }
\label{fig:1rep}       
\end{figure}
\begin{figure}
 \includegraphics[width=0.45\textwidth]{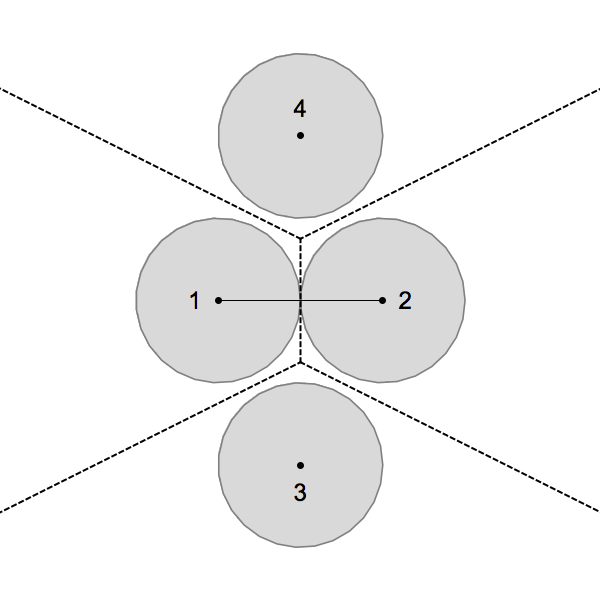}
\caption{Level $p = 2$ is the threshold $r=1 = b([1,2])$. }
\label{fig:2}       
\end{figure}
\begin{figure}
 \includegraphics[width=0.45\textwidth]{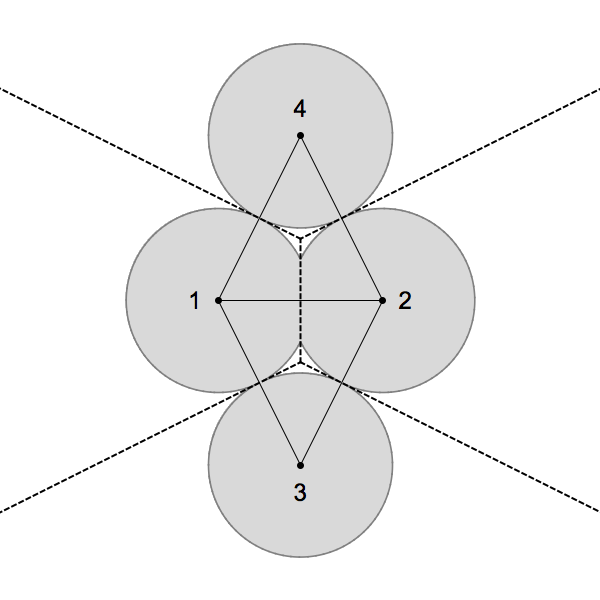}
\caption{Level $p =3$ is the threshold  $r =1.12 = b([1,3]) = b([1,4])= b([2,3]) = b([2,4])$. }
\label{fig:3}       
\end{figure}
\begin{figure}
 \includegraphics[width=0.45\textwidth]{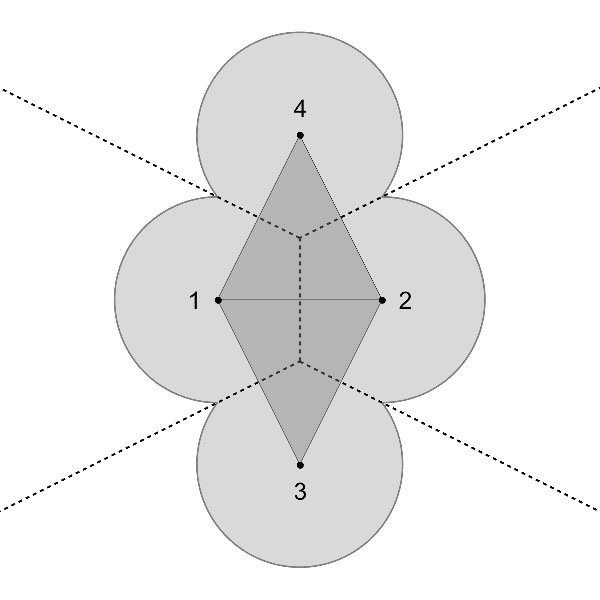}
\caption{Level $p = 4$ is the threshold $r= 1.25 = b([1,2,3]) = b([1,2,4])$. }
\label{fig:4}       
\end{figure}

The $\alpha$ construction produces a filtration of the Delaunay complex,  
and the simplicial 
homology \cite{Hatcher} of this filtered complex is described in terms of the barcode invariants \cite{Oudot,EH,Zom}.
Conventionally the filtration and the corresponding  barcodes are indexed  by the real-valued threshold parameter $r$, 
which for our example yields the $H_1$ barcode diagram of Figure \ref{fig:bc1}. 
%
%
\begin{figure*}
  \includegraphics[width=0.8\textwidth]{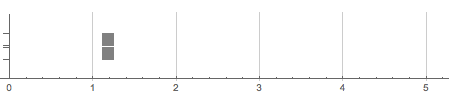}
\caption{$H_1$ barcodes indexed by the real threshold parameter $r$.}
\label{fig:bc1}       
\end{figure*}
The diagram indicates that the first homology $H_1$ detects two one-dimensional ``holes" that appear  
at $r = 1.12$ and are filled in at $r = 1.25$. 
In this paper we will index filtrations and the corresponding barcodes by the integer-valued level $p$, which 
for our example yields the $H_1$ barcode diagram of Figure \ref{fig:bc2}.  
%
%
\begin{figure*}
  \includegraphics[width=0.8\textwidth]{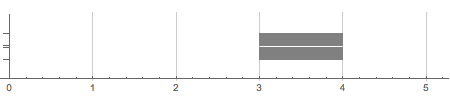}
\caption{$H_1$ barcodes indexed by the integer  level $p$.}
\label{fig:bc2}       
\end{figure*}
This diagram indicates the same information, namely that the first homology $H_1$ detects two one-dimensional ``holes" that appear  
at $p = 3$ (which corresponds to $r = 1.12$) and are filled in at $p =4$ (which corresponds to $r = 1.25$).    
\end{example}

\subsection{Matrix Structural Theorem} 
\label{subsection1.2}

For simplicity, we start with the ungraded version of the structural theorem.  
 A  {\it differential matrix} is a square matrix $D$ satisfying $D^2 = 0$.   
 We'll say a differential matrix is {\it Jordan} if it is in Jordan normal form,  meaning it decomposes as 
 a block-diagonal matrix built from copies of the two differential Jordan block matrices  
\begin{equation*}
J =   
 \begin{tikzpicture}[baseline={([yshift=-0.5ex]current bounding box.center)}]

[decoration=brace]
\tikzset{
    node style ge/.style={circle,minimum size=.75cm},
}
\pgfdeclarelayer{background}
\pgfdeclarelayer{foreground}
\pgfsetlayers{background,main,foreground}
\matrix (A) [matrix of math nodes,
             nodes = {node style ge},
             left delimiter  = {[}, 
             right delimiter = {]}, 
            inner sep=-2pt,
		row sep=-.3cm,
		column sep=-.25cm
             ]
{
0\\
};

\end{tikzpicture}
, \qquad K = 
 \begin{tikzpicture}[baseline={([yshift=-0.5ex]current bounding box.center)}]

[decoration=brace]
\tikzset{
    node style ge/.style={circle,minimum size=.75cm},
}
\pgfdeclarelayer{background}
\pgfdeclarelayer{foreground}
\pgfsetlayers{background,main,foreground}
\matrix (A) [matrix of math nodes,
             nodes = {node style ge},
             left delimiter  = {[}, 
             right delimiter = {]}, 
            inner sep=-2pt,
		row sep=-.3cm,
		column sep=-.25cm
             ]
{
0 & 1 \\
0 & 0 \\
};

\end{tikzpicture}
.
\end{equation*}
We'll say a differential matrix $\underline D$ is {\it almost-Jordan} if there exists a permutation matrix $P$ such that 
the differential matrix $P^{-1} \underline D P$ is Jordan.   Given an almost-Jordan differential matrix $\underline D$, 
it is trivial to construct such a permutation matrix $P$.   
We will say a square matrix $B$ is {\it triangular} if it is upper-triangular and invertible.

The standard algorithm for computing persistent homology is based on the papers \cite{ELZ,ZC1,ZC2}.    
The result of a persistent homology computation, not depending on a choice of  algorithm, is 
conveniently summarized   \cite{DMV,Oudot} as a matrix factorization:

\begin{theorem} 
\label{prenon}
 (Ungraded Matrix  Structural Theorem) 
 Any differential matrix $D$ factors as $D = B {\underline D} B^{-1}$ where  
${\underline D}$ is an almost-Jordan differential matrix  and $B$ is a triangular matrix.   
\end{theorem}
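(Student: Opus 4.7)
The plan is a two-step matrix reduction. First, I would run the classical column reduction algorithm of Edelsbrunner--Letscher--Zomorodian on $D$, producing an upper-triangular unipotent matrix $R$ such that the reduced matrix $\tilde D = DR$ has nonzero columns with pairwise distinct \emph{lows} (where the low of a column is the largest row index at which it has a nonzero entry). The algorithm processes columns left-to-right, and at each column $j$ subtracts multiples of earlier columns $k < j$ of matching low until column $j$ is zero or its low becomes unique among processed columns. Each such operation is right multiplication by an elementary upper-triangular matrix, so the cumulative $R$ is upper triangular with ones on the diagonal.

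Second, I would perform a basis adjustment. The reduced matrix $\tilde D$ partitions the indices $\{1,\dots,n\}$ into \emph{creators} (indices appearing as lows), \emph{destroyers} (columns $j$ with nonzero reduced form, each paired with its low creator $i$), and \emph{singletons}. I would construct $R'$ from $R$ by replacing, for each creator--destroyer pair $(i,j)$, the $i$-th column of $R$ with the destroyer column $\tilde D_{\cdot,j}$. Since $\tilde D_{\cdot,j}$ has low $i$, this new column lies in $\mathrm{span}(f_1,\dots,f_i)$, so $R'$ remains upper triangular; its diagonal entries are the original $1$s at destroyer and singleton positions, or the nonzero low value $\tilde D_{ij}$ at a creator position, so $R'$ is invertible.

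A direct computation then shows $\underline D := (R')^{-1} D R'$ is almost-Jordan. For a destroyer $j$ paired with creator $i$, column $\underline D_{\cdot,j} = (R')^{-1}\tilde D_{\cdot,j} = (R')^{-1} R'_{\cdot,i} = e_i$, the standard basis vector. For a creator $i$, column $\underline D_{\cdot,i} = (R')^{-1}D\tilde D_{\cdot,j} = (R')^{-1} D^2 R_{\cdot,j} = 0$ using $D^2 = 0$. For a singleton $k$, column $\underline D_{\cdot,k} = (R')^{-1} D R_{\cdot,k} = (R')^{-1}\tilde D_{\cdot,k} = 0$, since singleton columns of $\tilde D$ vanish. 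Thus $\underline D$ has entries in $\{0,1\}$, at most one nonzero entry per column, and matched destroyer/creator pairs, which is exactly the almost-Jordan condition; setting $B = R'$ yields the desired factorization.

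The main obstacle is verifying that the basis adjustment is internally consistent: the $i$-th column of $R'$ is swapped for a vector whose shape depends on the destroyer column associated with $i$, and one must check this does not conflict with the triangularity and invertibility conditions required of $B$. Termination of the column reduction is a standard bookkeeping argument (the sequence of leftmost column lows strictly decreases under each operation). The deeper algebraic input is the identity $D\tilde D = D^2 R = 0$, which makes every replaced column annihilated by $D$; this is the matrix embodiment of the differential condition $D^2 = 0$, and without it the creator columns of $\underline D$ would fail to vanish and the almost-Jordan shape would collapse.
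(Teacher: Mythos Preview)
Your proof is correct and follows essentially the same route as the paper's constructive argument in Appendix~B.2: your column-reduction matrix $R$ is the paper's $V$, your $\tilde D$ is the paper's $R = DV$, and your replacement construction $R'$ is exactly the paper's $\hat V$, with your column-by-column verification playing the role of Lemmas~\ref{redif2} and~\ref{redifm}. The one point you leave implicit is that the creator, destroyer, and singleton sets actually partition the indices (in particular that no destroyer is also a creator), which is needed for your computation of $\underline D_{\cdot,j}$ at destroyer columns; this follows from $D^2=0$ via the observation that each $\tilde D_{\cdot,j}\in\ker D$ lies in the span of the non-destroyer columns of $R$, forcing its low to be a non-destroyer index.
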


\noindent  It is the triangular condition that makes this interesting: without the triangular condition, 
this would follow immediately from the ordinary Jordan normal form.
Furthermore, the matrix ${\underline D}$ is unique, as we show in Appendix A.
   We'll call  ${\underline D}$  the {\it persistence canonical form} of the differential matrix $D$.  
A column of  the triangular matrix $B$ is in $\ker D$ iff  the corresponding column of ${\underline D}$ is zero. 
We will say that $B$ is {\it normalized} if each such column has diagonal entry equal to $1$.  
It is always possible to  normalize  $B$ by scalar multiplication of columns, but even with normalization  
$B$ is not unique in general. 
A constructive  proof of Theorem \ref{prenon} follows from any of the algorithms for computing persistent homology.        
In Appendices B.1 and B.2 we discuss the matrix reduction approach to computing persistent homology.  

\begin{example} 
\label{explain}
Consider the filtered simplicial complex shown in Figure \ref{fig:sc1}. 
%
%
\begin{figure*}
  \includegraphics[width=\textwidth]{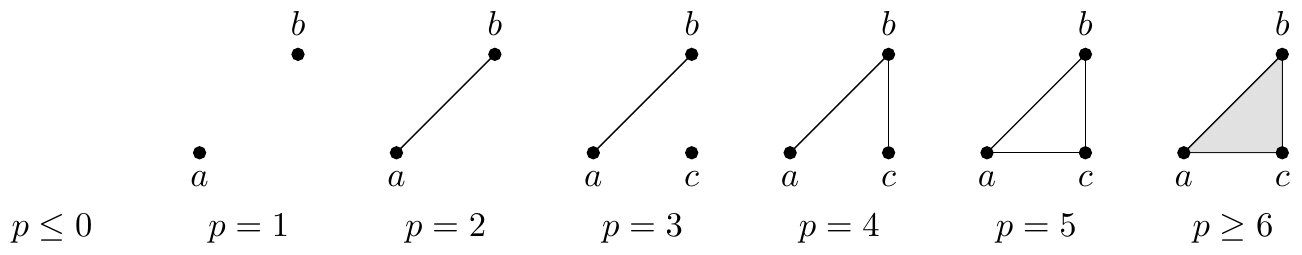}
\caption{A filtered Simplicial Complex.}
\label{fig:sc1}       
\end{figure*}
With the usual  convention for an adapted basis,   
the ordering of basis elements prioritizes  the level of the filtration over  the degree/dimension of the simplex.   
The initial basis of simplices is  then ordered  so the  level (denoted by prescript) is nondecreasing, and within each  level the 
degree (denoted by postscript) is nondecreasing.   Using lexicographic order to break any remaining ties, the initial adapted 
basis is   $ \tensor[_1]{a}{_0}, \tensor[_1]{b}{_0}, \tensor[_2]{ab}{_1}, \tensor[_3]{c}{_0}, 
\tensor[_4]{bc}{_1},  \tensor[_5]{ac}{_1}, \tensor[_6]{abc}{_2}$, and the boundary operator over the 
field ${\mathbb F} = {\mathbb Q}$ of rationals 
is   represented by the differential matrix
%
%
\begin{equation*}
D = 
 \begin{tikzpicture}[baseline={([yshift=-1.4ex]current bounding box.center)}]
[decoration=brace]
\tikzset{
    node style ge/.style={circle,minimum size=.75cm},
}
\pgfdeclarelayer{background}
\pgfdeclarelayer{foreground}
\pgfsetlayers{background,main,foreground}
\matrix (A) [matrix of math nodes,
             nodes = {node style ge},
             left delimiter  = {[}, 
             right delimiter = {]}, 
             inner sep=-2pt, 
             row sep=-.3cm, 
             column sep=0cm 
             ]
{
0 & 0 & -1 & 0 & 0 & -1 & 0 \\
0 & 0 & 1 & 0 & -1 & 0 & 0 \\
0 & 0 & 0 & 0 & 0 & 0 & 1 \\
0 & 0 & 0 & 0 & 1 & 1 & 0 \\
0 & 0 & 0 & 0 & 0 & 0 & 1 \\
0 & 0 & 0 & 0 & 0 & 0 & -1 \\
0 & 0 & 0 & 0 & 0 & 0 & 0 \\
};
\node[above=-1pt] at (A-1-1.north) {$\tensor[_1]{a}{_0}$};
\node[above=-1pt] at (A-1-2.north) {$\tensor[_1]{b}{_0}$};
\node[above=-1pt] at (A-1-3.north) {$\tensor[_2]{ab}{_1}$};
\node[above=-1pt] at (A-1-4.north) {$\tensor[_3]{c}{_0}$};
\node[above=-1pt] at (A-1-5.north) {$\tensor[_4]{bc}{_1}$};
\node[above=-1pt] at (A-1-6.north) {$\tensor[_5]{ac}{_1}$};
\node[above=-1pt] at (A-1-7.north) {$\tensor[_6]{abc}{_2}$};
\node[left=6pt,yshift=-.05cm] at (A-1-1.west) {$\tensor[_1]{a}{_0} $};
\node[left=6pt,yshift=-.05cm] at (A-2-1.west) {$\tensor[_1]{b}{_0} $};
\node[left=6pt,yshift=-.05cm] at (A-3-1.west) {$\tensor[_2]{ab}{_1}$};
\node[left=6pt,yshift=-.05cm] at (A-4-1.west) {$ \tensor[_3]{c}{_0}$};
\node[left=6pt,yshift=-.05cm] at (A-5-1.west) {$\tensor[_4]{bc}{_1} $};
\node[left=6pt,yshift=-.05cm] at (A-6-1.west) {$\tensor[_5]{ac}{_1}$};
\node[left=6pt,yshift=-.05cm] at (A-7-1.west) {$\tensor[_6]{abc}{_2}$};

\end{tikzpicture}
.
\end{equation*}

%
\noindent The persistence canonical form is 
\begin{equation*} 
{\underline D} = 
 \begin{tikzpicture}[baseline={([yshift=-1.4ex]current bounding box.center)}]
[decoration=brace]
\tikzset{
    node style ge/.style={circle,minimum size=.75cm},
}
\pgfdeclarelayer{background}
\pgfdeclarelayer{foreground}
\pgfsetlayers{background,main,foreground}
\matrix (A) [matrix of math nodes,
             nodes = {node style ge},
             left delimiter  = {[}, 
             right delimiter = {]}, 
             inner sep=-2pt, 
             row sep=-.3cm, 
             column sep=0cm 
             ]
{
0 & 0 & 0 & 0 & 0 & 0 & 0 \\
0 &  { 0} & { 1} & 0 & 0 & 0 & 0 \\
0 & 0 & 0 & 0 & 0 & 0 & 0 \\
0 & 0 & 0 &  { 0} & { 1} & 0 & 0 \\
0 & 0 & 0 & 0 &  0 & 0 & 0 \\
0 & 0 & 0 & 0 & 0 &  { 0} & { 1} \\
0 & 0 & 0 & 0 & 0 & 0 &  0 \\
};
\node[above=-1pt] at (A-1-1.north) {$\tensor[_1]{\underline{a}}{_0} $};
\node[above=-1pt] at (A-1-2.north) {$\tensor[_1]{\underline{b}}{_0} $};
\node[above=-1pt] at (A-1-3.north) {$ \tensor[_2]{\underline{ab}}{_1} $};
\node[above=-1pt] at (A-1-4.north) {$ \tensor[_3]{\underline{c}}{_0}$};
\node[above=-1pt] at (A-1-5.north) {$\tensor[_4]{\underline{bc}}{_1} $};
\node[above=-1pt] at (A-1-6.north) {$\tensor[_5]{\underline{ac}}{_1} $};
\node[above=-1pt] at (A-1-7.north) {$\tensor[_6]{\underline{abc}}{_2}  $};
\node[left=6pt,yshift=-.05cm] at (A-1-1.west) {$\tensor[_1]{\underline{a}}{_0} $};
\node[left=6pt,yshift=-.05cm] at (A-2-1.west) {$\tensor[_1]{\underline{b}}{_0} $};
\node[left=6pt,yshift=-.05cm] at (A-3-1.west) {$\tensor[_2]{\underline{ab}}{_1}$};
\node[left=6pt,yshift=-.05cm] at (A-4-1.west) {$\tensor[_3]{\underline{c}}{_0} $};
\node[left=6pt,yshift=-.05cm] at (A-5-1.west) {$\tensor[_4]{\underline{bc}}{_1} $};
\node[left=6pt,yshift=-.05cm] at (A-6-1.west) {$\tensor[_5]{\underline{ac}}{_1}$};
\node[left=6pt,yshift=-.05cm] at (A-7-1.west) {$\tensor[_6]{\underline{abc}}{_2} $};

\end{tikzpicture}
,
\end{equation*}
\noindent   
as verified by checking that $\underline D =  B^{-1} D B$ for the triangular (and normalized) matrix  
\begin{equation*}
B = 
  \begin{tikzpicture}[baseline={([yshift=-1.4ex]current bounding box.center)}]

[decoration=brace]

\tikzset{
    node style ge/.style={circle,minimum size=.75cm},
}
\pgfdeclarelayer{background}
\pgfdeclarelayer{foreground}
\pgfsetlayers{background,main,foreground}
\matrix (A) [matrix of math nodes,
             nodes = {node style ge},
             left delimiter  = {[}, 
             right delimiter = {]}, 
             inner sep=-2pt, 
             row sep=-.3cm, 
             column sep=0cm 
             ]
{
1 & -1 & 0 & -1 & 0 & 0 & 0 \\
0 & 1 & 0 & 0 & 0 & 0 & 0 \\
0 & 0 & 1 & 0 & 1 & -1 & 0 \\
0 & 0 & 0 & 1 & 0 & 0 & 0 \\
0 & 0 & 0 & 0 & 1 & -1 & 0 \\
0 & 0 & 0 & 0 & 0 & 1 & 0 \\
0 & 0 & 0 & 0 & 0 & 0 & -1 \\
};
\node[above=-1pt] at (A-1-1.north) {$\tensor[_1]{\underline{a}}{_0}  $};
\node[above=-1pt] at (A-1-2.north) {$\tensor[_1]{\underline{b}}{_0} $};
\node[above=-1pt] at (A-1-3.north) {$\tensor[_2]{\underline{ab}}{_1}$};
\node[above=-1pt] at (A-1-4.north) {$ \tensor[_3]{\underline{c}}{_0} $};
\node[above=-1pt] at (A-1-5.north) {$\tensor[_4]{\underline{bc}}{_1} $};
\node[above=-1pt] at (A-1-6.north) {$\tensor[_5]{\underline{ac}}{_1} $};
\node[above=-1pt] at (A-1-7.north) {$\tensor[_6]{\underline{abc}}{_2} $};
\node[left=6pt,yshift=-.05cm] at (A-1-1.west) {$\tensor[_1]{a}{_0}  $};
\node[left=6pt,yshift=-.05cm] at (A-2-1.west) {$\tensor[_1]{b}{_0} $};
\node[left=6pt,yshift=-.05cm] at (A-3-1.west) {$\tensor[_2]{ab}{_1}$};
\node[left=6pt,yshift=-.05cm] at (A-4-1.west) {$\tensor[_3]{c}{_0} $};
\node[left=6pt,yshift=-.05cm] at (A-5-1.west) {$\tensor[_4]{bc}{_1} $};
\node[left=6pt,yshift=-.05cm] at (A-6-1.west) {$\tensor[_5]{ac}{_1} $};
\node[left=6pt,yshift=-.05cm] at (A-7-1.west) {$\tensor[_6]{abc}{_2} $};

\end{tikzpicture}
.
\end{equation*}
\noindent   
The persistence canonical form ${\underline D}$ is  almost-Jordan in general,  and in 
this example it happens to be actually Jordan.    
The  matrix $B$ represents the basis change to the new adapted basis  
$\tensor[_1]{\underline{a}}{_0},  \tensor[_1]{\underline{b}}{_0}, \tensor[_2]{\underline{ab}}{_1}, \tensor[_3]{\underline{c}}{_0},
 \tensor[_4]{\underline{bc}}{_1},  \tensor[_5]{\underline{ac}}{_1},  \tensor[_6]{\underline{abc}}{_2}$.  
 The  level  remains nondecreasing because $B$ is triangular.  
Each basis element retains pure degree, 
although  Theorem \ref{prenon} does not explicitly address issues of degree.  
 The matrix ${\underline D}$ represents 
the boundary operator  relative to the new adapted basis.   
\end{example}

We prefer to prioritize degree over   level in ordering the elements of an adapted basis.      
This has the advantage of encoding the degree in the  block structure of the matrix.  
The following version of the structural theorem is then manifestly compatible with the grading by degree:  

\begin{theorem} 
\label{postnon}
(Matrix  Structural Theorem) 
 Any block-superdiagonal differential matrix $D$ factors as $D = B {\underline D} B^{-1}$ where  
${\underline D}$ is a block-superdiagonal almost-Jordan differential matrix  and $B$ is a block-diagonal triangular matrix.   
\end{theorem}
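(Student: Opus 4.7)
The plan is to refine the matrix reduction argument underlying Theorem~\ref{prenon} so that the resulting basis change matrix $B$ additionally respects the degree grading. The block-superdiagonal hypothesis on $D$ says that, in the chosen basis ordering (pure degree first, level within each degree second), every column of pure degree $n$ has all its nonzero entries concentrated in rows of pure degree $n-1$; in particular, degree-$0$ columns are identically zero.

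First, I would apply the matrix reduction algorithm described in Appendices~B.1--B.2 to $D$ in this degree-first, level-second ordering. By the constructive proof of Theorem~\ref{prenon}, the algorithm yields a triangular matrix $B$ and an almost-Jordan differential matrix $\underline D$ with $D = B\,\underline D\, B^{-1}$. The key observation is that, in this ordering, every column operation the algorithm performs takes place between two columns of the same degree. Indeed, when the algorithm reduces a column $j$ of degree $n$ against an earlier column $i$, it does so because the pivot (i.e.\ lowest nonzero) row of column $j$ lies in degree $n-1$ and also carries a nonzero entry of column $i$; but the block-superdiagonal hypothesis forces the only columns with any nonzero entry in a degree-$(n-1)$ row to be the columns of pure degree $n$, so $i$ must itself have degree $n$.

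It follows that the matrix $B$ produced by the algorithm is block-diagonal with respect to the degree decomposition while remaining triangular within each degree block, so $B$ is a block-diagonal triangular matrix in the sense of the theorem. Since conjugation by a block-diagonal matrix preserves block structure, the conjugate $\underline D = B^{-1} D B$ is again block-superdiagonal, and it is almost-Jordan by Theorem~\ref{prenon}, completing the factorization.

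I expect the main subtlety to be precisely this degree-tracking of the reduction algorithm, which requires committing to a specific constructive proof of Theorem~\ref{prenon} rather than invoking its statement as a black box: any $B$ furnished abstractly by Theorem~\ref{prenon} need not \emph{a priori} be block-diagonal, and the argument rests on ordering the operations so that the algorithm never has occasion to combine columns of different degrees. A potential alternative would be to postprocess an arbitrary $B$ from Theorem~\ref{prenon} by projecting onto its block-diagonal part and verifying that the factorization survives, but this appears strictly harder than running the algorithm in the correct order from the start.
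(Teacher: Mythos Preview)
Your proposal is correct and matches the paper's own approach: the paper's proof in Appendix~B.2 is precisely that running the column-reduction algorithm on a block-superdiagonal $D$ yields a block-diagonal $V$ (and hence a block-diagonal $B=\hat V$), and you have supplied the reasoning the paper leaves implicit, namely that any column operation the algorithm performs must occur between columns sharing a nonzero row, which the block-superdiagonal hypothesis forces to have the same degree. Your closing remark about the necessity of tracking degrees through a specific algorithm rather than invoking Theorem~\ref{prenon} as a black box is also exactly the point the paper makes in the discussion following the statement of Theorem~\ref{postnon}.
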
 

\noindent The block-diagonal structure of $B$ ensures that the transformed basis elements retain pure degree. 
The persistence canonical form ${\underline D}$  inherits the block-superdiagonal structure 
of the differential $D$.    
It is always possible to normalize $B$ by scalar multiplication of columns as in the ungraded case.  
Any of the  algorithmic proofs of Theorem \ref{prenon}  \cite{ELZ,ZC1,ZC2}  
can be used to prove Theorem \ref{postnon} by keeping track of degrees.  
We discuss this point for the standard algorithm in Appendix B.2.

\begin{example} 
\label{exfilt}
We again consider the filtered chain complex of Example \ref{explain}, but    
with  basis order prioritizing   degree  over   level.   
Now the degree of basis elements (denoted by postscript) is nondecreasing, and within a degree the  level 
(denoted by prescript) of basis elements is nondecreasing.  
Using lexicographic order to break any remaining ties, the initial 
adapted basis is now   $ \tensor[_1]{a}{_0}, \tensor[_1]{b}{_0}, \tensor[_3]{c}{_0}, \tensor[_2]{ab}{_1}, \tensor[_4]{bc}{_1}, \tensor[_5]{ac}{_1},
 \tensor[_6]{abc}{_2}$, and the boundary operator over the 
field ${\mathbb F} = {\mathbb Q}$ of rationals 
is  now represented by the block-superdiagonal differential matrix
%
%
\begin{equation*}
D = 
 \begin{tikzpicture}[baseline={([yshift=-1.4ex]current bounding box.center)}]
[decoration=brace]
\tikzset{
    node style ge/.style={circle,minimum size=.75cm},
}
\pgfdeclarelayer{background}
\pgfdeclarelayer{foreground}
\pgfsetlayers{background,main,foreground}
\matrix (A) [matrix of math nodes,
             nodes = {node style ge},
             left delimiter  = {[}, 
             right delimiter = {]}, 
             inner sep=-2pt, 
             row sep=-.3cm, 
             column sep=0cm 
             ]
{
0 & 0 &  0 & {-1} & 0 &{-1}  & 0 \\
0 & 0 & 0 & 1 & -1 & 0 & 0 \\
0 & 0 &  0 &  0 & 1 &   1 &  0 \\
0 & 0 & 0 & 0 & 0 & 0 &1 \\
0 & 0 & 0 & 0 & 0 & 0 & 1 \\
0 & 0 & 0 & 0 & 0 & 0 & {-1} \\
0 & 0 & 0 & 0 & 0 & 0 & 0 \\
};
\node[above=-1pt] at (A-1-1.north) {$\tensor[_1]{a}{_0}$};
\node[above=-1pt] at (A-1-2.north) {$\tensor[_1]{b}{_0}$};
\node[above=-1pt] at (A-1-3.north) {$\tensor[_3]{c}{_0}$};
\node[above=-1pt] at (A-1-4.north) {$\tensor[_2]{ab}{_1}$};
\node[above=-1pt] at (A-1-5.north) {$\tensor[_4]{bc}{_1}$};
\node[above=-1pt] at (A-1-6.north) {$\tensor[_5]{ac}{_1}$};
\node[above=-1pt] at (A-1-7.north) {$\tensor[_6]{abc}{_2}$};
\node[left=6pt,yshift=-.05cm] at (A-1-1.west) {$\tensor[_1]{a}{_0} $};
\node[left=6pt,yshift=-.05cm] at (A-2-1.west) {$\tensor[_1]{b}{_0} $};
\node[left=6pt,yshift=-.05cm] at (A-3-1.west) {$ \tensor[_3]{c}{_0}$};
\node[left=6pt,yshift=-.05cm] at (A-4-1.west) {$\tensor[_2]{ab}{_1}$};
\node[left=6pt,yshift=-.05cm] at (A-5-1.west) {$\tensor[_4]{bc}{_1} $};
\node[left=6pt,yshift=-.05cm] at (A-6-1.west) {$\tensor[_5]{ac}{_1}$};
\node[left=6pt,yshift=-.05cm] at (A-7-1.west) {$\tensor[_6]{abc}{_2}$};

\node (top1) at ($(A-1-3.north)!0.5!(A-1-4.north)$) {};
\node (top2) at ($(A-1-6.north)!0.5!(A-1-7.north)$) {};

\node (bot1) at ($(A-7-3.south)!0.5!(A-7-4.south)$) {};
\node (bot2) at ($(A-7-6.south)!0.5!(A-7-7.south)$) {};

\node (left1) at ($(A-3-1.south west)!0.5!(A-4-1.north west)-(.15,0)$) {};
\node (left2) at ($(A-6-1.south west)!0.5!(A-7-1.north west)-(.15,0)$) {};

\node (right1) at ($(A-3-7.south east)!0.5!(A-4-7.north east)+(.15,0)$) {};
\node (right2) at ($(A-6-7.south east)!0.5!(A-7-7.north east)+(.15,0)$) {};

\node (a_1) at ($(A-3-3.south east)!0.5!(A-3-4.south west)$) {};
\node (a_2) at ($(A-4-3.north east)!0.5!(A-4-4.north west)$) {};
\node (b_1) at ($(A-3-6.south east)!0.5!(A-3-7.south west)$) {};
\node (b_2) at ($(A-4-6.north east)!0.5!(A-4-7.north west)$) {};

\node (mid11) at ($(a_1)!0.5!(a_2)$) {};
\node (mid12) at ($(b_1)!0.5!(b_2)$) {};

\draw (left1) -- (right1);
\draw (left2) -- (right2);
\draw (top1) -- (bot1);
\draw (top2) -- (bot2);

\begin{pgfonlayer}{background}


\fill[black!20!white] (mid11) rectangle (top2.south);
\fill[black!20!white] (mid12) rectangle (right2.west);

\end{pgfonlayer}

\end{tikzpicture}
.
\end{equation*}


\noindent The persistence canonical form inherits the block-superdiagonal structure  
%
\begin{equation*} 
{\underline D} = 
 \begin{tikzpicture}[baseline={([yshift=-1.4ex]current bounding box.center)}]
[decoration=brace]
\tikzset{
    node style ge/.style={circle,minimum size=.75cm},
}
\pgfdeclarelayer{background}
\pgfdeclarelayer{foreground}
\pgfsetlayers{background,main,foreground}
\matrix (A) [matrix of math nodes,
             nodes = {node style ge},
             left delimiter  = {[}, 
             right delimiter = {]}, 
             inner sep=-2pt, 
             row sep=-.3cm, 
             column sep=0cm 
             ]
{
0 & 0 &  0 & 0 & 0 & 0 & 0 \\
0 & 0 & 0 & 1 & 0 & 0 & 0 \\
0 & 0 &  0 &  0 & 1 &   0 &  0 \\
0 & 0 & 0 & 0 & 0 & 0 & 0 \\
0 & 0 & 0 & 0 & 0 & 0 & 0 \\
0 & 0 & 0 & 0 & 0 & 0 & 0 \\
0 & 0 & 0 & 0 & 0 & 0 & 0 \\
};
\node[above=-1pt] at (A-1-1.north) {$\tensor[_1]{\underline{a}}{_0} $};
\node[above=-1pt] at (A-1-2.north) {$\tensor[_1]{\underline{b}}{_0} $};
\node[above=-1pt] at (A-1-3.north) {$\tensor[_3]{\underline{c}}{_0}  $};
\node[above=-1pt] at (A-1-4.north) {$\tensor[_2]{\underline{ab}}{_1} $};
\node[above=-1pt] at (A-1-5.north) {$\tensor[_4]{\underline{bc}}{_1} $};
\node[above=-1pt] at (A-1-6.north) {$\tensor[_5]{\underline{ac}}{_1} $};
\node[above=-1pt] at (A-1-7.north) {$\tensor[_6]{\underline{abc}}{_2}  $};
\node[left=6pt,yshift=-.05cm] at (A-1-1.west) {$\tensor[_1]{\underline{a}}{_0} $};
\node[left=6pt,yshift=-.05cm] at (A-2-1.west) {$\tensor[_1]{\underline{b}}{_0} $};
\node[left=6pt,yshift=-.05cm] at (A-3-1.west) {$\tensor[_3]{\underline{c}}{_0} $};
\node[left=6pt,yshift=-.05cm] at (A-4-1.west) {$\tensor[_2]{\underline{ab}}{_1} $};
\node[left=6pt,yshift=-.05cm] at (A-5-1.west) {$\tensor[_4]{\underline{bc}}{_1} $};
\node[left=6pt,yshift=-.05cm] at (A-6-1.west) {$\tensor[_5]{\underline{ac}}{_1}$};
\node[left=6pt,yshift=-.05cm] at (A-7-1.west) {$\tensor[_6]{\underline{abc}}{_2} $};

\node (top1) at ($(A-1-3.north)!0.5!(A-1-4.north)$) {};
\node (top2) at ($(A-1-6.north)!0.5!(A-1-7.north)$) {};

\node (bot1) at ($(A-7-3.south)!0.5!(A-7-4.south)$) {};
\node (bot2) at ($(A-7-6.south)!0.5!(A-7-7.south)$) {};

\node (left1) at ($(A-3-1.south west)!0.5!(A-4-1.north west)-(.15,0)$) {};
\node (left2) at ($(A-6-1.south west)!0.5!(A-7-1.north west)-(.15,0)$) {};

\node (right1) at ($(A-3-7.south east)!0.5!(A-4-7.north east)+(.15,0)$) {};
\node (right2) at ($(A-6-7.south east)!0.5!(A-7-7.north east)+(.15,0)$) {};

\node (a_1) at ($(A-3-3.south east)!0.5!(A-3-4.south west)$) {};
\node (a_2) at ($(A-4-3.north east)!0.5!(A-4-4.north west)$) {};
\node (b_1) at ($(A-3-6.south east)!0.5!(A-3-7.south west)$) {};
\node (b_2) at ($(A-4-6.north east)!0.5!(A-4-7.north west)$) {};

\node (mid11) at ($(a_1)!0.5!(a_2)$) {};
\node (mid12) at ($(b_1)!0.5!(b_2)$) {};

\draw (left1) -- (right1);
\draw (left2) -- (right2);
\draw (top1) -- (bot1);
\draw (top2) -- (bot2);

\begin{pgfonlayer}{background}


\fill[black!20!white] (mid11) rectangle (top2.south);
\fill[black!20!white] (mid12) rectangle (right2.west);

\end{pgfonlayer}

\end{tikzpicture}
,
\end{equation*}

\noindent  
as verified by checking that   $\underline D =  B^{-1} D B$ for the  block-diagonal  triangular (and normalized) matrix 

\begin{equation*}
B = 
  \begin{tikzpicture}[baseline={([yshift=-1.4ex]current bounding box.center)}]

[decoration=brace]

\tikzset{
    node style ge/.style={circle,minimum size=.75cm},
}
\pgfdeclarelayer{background}
\pgfdeclarelayer{foreground}
\pgfsetlayers{background,main,foreground}
\matrix (A) [matrix of math nodes,
             nodes = {node style ge},
             left delimiter  = {[}, 
             right delimiter = {]}, 
             inner sep=-2pt, 
             row sep=-.3cm, 
             column sep=0cm 
             ]
{
1 & {-1} &  {-1} & 0 & 0 & 0 & 0 \\
0 & 1 & 0 & 0 & 0 & 0 & 0 \\
0 & 0 &  1 &  0 & 0 &   0 &  0 \\
0 & 0 & 0 & 1 & 1 & -1 &0 \\
0 & 0 & 0 & 0 & 1 & -1 & 0 \\
0 & 0 & 0 & 0 & 0 & 1 & 0 \\
0 & 0 & 0 & 0 & 0 & 0 & {-1} \\
};
\node[above=-1pt] at (A-1-1.north) {$\tensor[_1]{\underline{a}}{_0}  $};
\node[above=-1pt] at (A-1-2.north) {$\tensor[_1]{\underline{b}}{_0} $};
\node[above=-1pt] at (A-1-3.north) {$\tensor[_3]{\underline{c}}{_0} $};
\node[above=-1pt] at (A-1-4.north) {$\tensor[_2]{\underline{ab}}{_1} $};
\node[above=-1pt] at (A-1-5.north) {$\tensor[_4]{\underline{bc}}{_1} $};
\node[above=-1pt] at (A-1-6.north) {$\tensor[_5]{\underline{ac}}{_1} $};
\node[above=-1pt] at (A-1-7.north) {$\tensor[_6]{\underline{abc}}{_2} $};
\node[left=6pt,yshift=-.05cm] at (A-1-1.west) {$\tensor[_1]{a}{_0}  $};
\node[left=6pt,yshift=-.05cm] at (A-2-1.west) {$\tensor[_1]{b}{_0} $};
\node[left=6pt,yshift=-.05cm] at (A-3-1.west) {$\tensor[_3]{c}{_0} $};
\node[left=6pt,yshift=-.05cm] at (A-4-1.west) {$\tensor[_2]{ab}{_1} $};
\node[left=6pt,yshift=-.05cm] at (A-5-1.west) {$\tensor[_4]{bc}{_1} $};
\node[left=6pt,yshift=-.05cm] at (A-6-1.west) {$\tensor[_5]{ac}{_1} $};
\node[left=6pt,yshift=-.05cm] at (A-7-1.west) {$\tensor[_6]{abc}{_2} $};

\node (top1) at ($(A-1-3.north)!0.5!(A-1-4.north)$) {};
\node (top2) at ($(A-1-6.north)!0.5!(A-1-7.north)$) {};

\node (bot1) at ($(A-7-3.south)!0.5!(A-7-4.south)$) {};
\node (bot2) at ($(A-7-6.south)!0.5!(A-7-7.south)$) {};

\node (left1) at ($(A-3-1.south west)!0.5!(A-4-1.north west)-(.15,0)$) {};
\node (left2) at ($(A-6-1.south west)!0.5!(A-7-1.north west)-(.15,0)$) {};

\node (right1) at ($(A-3-7.south east)!0.5!(A-4-7.north east)+(.15,0)$) {};
\node (right2) at ($(A-6-7.south east)!0.5!(A-7-7.north east)+(.15,0)$) {};

\node (corner_tl) at (A-1-1.north west) {};
\node (corner_br) at (A-7-7.south east) {};

\node (a_1) at ($(A-3-3.south east)!0.5!(A-3-4.south west)$) {};
\node (a_2) at ($(A-4-3.north east)!0.5!(A-4-4.north west)$) {};
\node (b_1) at ($(A-6-6.south east)!0.5!(A-6-7.south west)$) {};
\node (b_2) at ($(A-7-6.north east)!0.5!(A-7-7.north west)$) {};

\node (mid11) at ($(a_1)!0.5!(a_2)$) {};
\node (mid22) at ($(b_1)!0.5!(b_2)$) {};

\draw (left1) -- (right1);
\draw (left2) -- (right2);
\draw (top1) -- (bot1);
\draw (top2) -- (bot2);

\begin{pgfonlayer}{background}


\fill[black!20!white] (mid11) rectangle (mid22);
\fill[black!20!white] (mid11) rectangle (corner_tl);
\fill[black!20!white] (mid22) rectangle (corner_br);

\end{pgfonlayer}

\end{tikzpicture}
.
\end{equation*}

%
\noindent   
The persistence canonical form ${\underline D}$ is  almost-Jordan, but not actually Jordan in this example.     
The  matrix $B$ represents the basis change to the new adapted basis  \hfil\break
$ \tensor[_1]{\underline{a}}{_0}, \tensor[_1]{\underline{b}}{_0}, \tensor[_3]{\underline{c}}{_0}, 
\tensor[_2]{\underline{ab}}{_1}, \tensor[_4]{\underline{bc}}{_1}, \tensor[_5]{\underline{ac}}{_1}, \tensor[_6]{\underline{abc}}{_2} $.  
Since $B$ is block-diagonal, 
each basis element retains pure degree, and the degree remains nondecreasing.  
Since $B$ is triangular, 
 the  level remains nondecreasing within each degree.       
The computation of this particular matrix $B$ via the standard matrix reduction algorithm is worked out in  Appendix B.2.  
\end{example}

\subsection{Categorical Structural Theorem and Structural Equivalence}
\label{subsection1.3}

A Krull-Schmidt category is an additive category where objects decompose nicely as direct 
sums of indecomposable objects.  
  In  Chapter 2,  we  study the additive category of 
filtered chain complexes in the setting of Krull-Schmidt categories, starting with 
a review of Krull-Schmidt categories in section 2.1.    A filtered complex will be called {\it basic} if 
its boundary operator can be represented by differential matrix consisting of a single Jordan block.  
We will use nonconstructive categorical methods to prove the 
following structural theorem for the category of filtered complexes:    

\begin{theorem}
\label{classFilt} 
(Categorical  Structural Theorem)
The category of filtered complexes  is Krull-Schmidt. 
A filtered complex  is indecomposable iff it is basic.  
\end{theorem}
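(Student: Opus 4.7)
The plan is to prove the two parts of the theorem separately: apply Atiyah's criterion to obtain the Krull-Schmidt structure, then analyze endomorphism rings to classify the indecomposables as exactly the basic filtered complexes.

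For the Krull-Schmidt assertion, I would invoke Atiyah's theorem, which states that a Hom-finite additive category with split idempotents is Krull-Schmidt. The category of filtered chain complexes is additive, with biproducts given by componentwise direct sum of filtered levels (preserving the boundary operator), and Hom-finite because a filtered chain map is determined by a finite collection of $\mathbb{F}$-linear maps between finite-dimensional spaces. For idempotent splitting, given a filtered chain map $e : X \to X$ with $e^2 = e$, the subspaces $\image e$ and $\kernel e$ are preserved by $\partial$ (since $e$ commutes with $\partial$) and inherit the level filtration by intersection; the usual linear-algebraic decomposition $X = \image e \oplus \kernel e$ then upgrades to a biproduct in the category of filtered chain complexes. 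Atiyah's theorem then delivers the Krull-Schmidt property.

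For the characterization of indecomposables, I would first verify that each basic filtered complex $B$ is indecomposable by computing $\mathrm{End}(B)$ and checking that it is local. A $J$-type basic complex, a one-dimensional vector space at a single level with trivial differential, has endomorphism ring $\mathbb{F}$, which is trivially local. A $K$-type basic complex, with two generators at levels $p \le q$ connected by the differential, has endomorphism ring isomorphic to the local ring $\mathbb{F}[\epsilon]/(\epsilon^2)$, since compatibility with the filtration (upper-triangular form) together with commutation with $\partial$ (equality of the two diagonal entries) leaves only a one-parameter off-diagonal deformation of each scalar. Hence every basic complex is indecomposable.

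The main obstacle is the converse: every indecomposable filtered complex is basic. My plan is contrapositive, exhibiting, for any non-basic $X$, an explicit basic direct summand. I would locate $B \subseteq X$ at the lowest level at which either (i) an element of $\kernel \partial$ fails to lie in $\image \partial$, producing a $J$-type basic subcomplex, or (ii) a boundary relation $\partial x \neq 0$ supported at specific levels produces a $K$-type basic subcomplex. The delicate step is constructing a filtered chain-map retraction $X \to B$: one must extend a linear splitting at each degree to a map that both commutes with $\partial$ and respects the level filtration, which forces a careful downward induction on degree and level. Iterating on the complementary summand $X'$ and invoking finite-dimensionality to terminate, one expresses every filtered complex as a direct sum of basic pieces, so the indecomposables are precisely the basic filtered complexes.
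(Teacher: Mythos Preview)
Your approach is genuinely different from the paper's, and mostly sound, but there is one minor error and one substantive gap.

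The paper never works directly in the category of filtered complexes to verify Krull--Schmidt axioms. Instead, it views a filtered complex as a filtered object in the linear \emph{Abelian} category $\mathcal{C}$ of chain complexes, and proves a general lemma (Lemma~\ref{crass}): filtered objects in any concrete linear Abelian category $\mathcal{X}$ form a Krull--Schmidt category, and a filtered object is indecomposable iff its colimit is indecomposable in $\mathcal{X}$. This colimit trick completely bypasses your ``delicate step'': the classification of indecomposable filtered complexes reduces to the classification of indecomposable complexes, which are just the interval complexes $J[n]$ and $K[n]$ (Proposition~\ref{comp}). Your approach, by contrast, works hands-on inside the non-Abelian category of filtered complexes, which buys concreteness but forces you to build filtered chain retractions by hand.

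The minor error: the endomorphism ring of a $K$-type basic filtered complex is $\mathbb{F}$, not $\mathbb{F}[\epsilon]/(\epsilon^2)$. A chain map preserves homological degree, so there is no off-diagonal term between the degree-$n$ and degree-$(n{+}1)$ generators; commutation with $\partial$ then forces the two diagonal scalars to be equal. This does not break your argument, since $\mathbb{F}$ is certainly local, but it indicates you were thinking of a filtered vector space rather than a filtered chain complex.

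The substantive gap is your converse direction. You correctly identify the construction of a filtered chain retraction $X \to B$ as the crux, but ``careful downward induction on degree and level'' is precisely the content of the matrix reduction algorithms (Appendix~B of the paper), i.e., the \emph{constructive} proof of the Matrix Structural Theorem. The paper's whole point is to give a \emph{nonconstructive} categorical proof that avoids this; your sketch, as written, does not avoid it, and the details you omit are nontrivial (one must ensure simultaneously that the splitting respects all filtration levels and commutes with $\partial$, which is exactly where the standard algorithm does real work). If you want a self-contained argument along your lines, you should either carry out that induction explicitly or, better, adopt the paper's colimit reduction.
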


In  chapter 3 we will prove the equivalence of the matrix and the categorical versions of the structural theorem.  
One direction is proved in section 3.1:   
\begin{proposition} 
\label{StructEquiv}  (Forward Structural Equivalence) 
 The Matrix Structural Theorem implies the Categorical Structural Theorem. 
\end{proposition}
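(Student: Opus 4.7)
The plan is to leverage the Matrix Structural Theorem (Theorem \ref{postnon}) as a concrete change-of-basis result that produces an explicit direct sum decomposition of any filtered complex into basic complexes, and then combine this with a local-endomorphism-ring argument to deduce the full Krull-Schmidt property together with the classification of indecomposables.

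Given a filtered complex $C$, I would fix an adapted basis (with degree prioritized over level, as in Example \ref{exfilt}) and represent its boundary operator by a block-superdiagonal differential matrix $D$. Applying the Matrix Structural Theorem yields a factorization $D = B\,\underline{D}\,B^{-1}$ with $\underline{D}$ block-superdiagonal almost-Jordan and $B$ block-diagonal triangular. The block-diagonal condition ensures that $B$ preserves the degree grading; the triangular condition, combined with the ordering of basis elements by level within each degree, ensures that $B$ respects the filtration. Hence $B$ defines an isomorphism in the category of filtered chain complexes from $C$ to a filtered complex $C'$ whose boundary is represented by $\underline{D}$. Since $\underline{D}$ is almost-Jordan, a permutation of basis elements puts its boundary into literal Jordan block-diagonal form, and each $J$-block or $K$-block identifies a sub-basis spanning a filtered sub-complex that is by definition basic. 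Therefore $C \cong C'$ is exhibited as a finite direct sum of basic filtered complexes.

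Next I would verify that every basic filtered complex is indecomposable by computing its endomorphism ring. A morphism in the filtered chain complex category must preserve degree, commute with the boundary, and respect the filtration. For a basic complex of type $J$ (a single basis element at fixed level and degree with zero boundary) the endomorphisms reduce to scalar multiplication. For a basic complex of type $K$ (two basis elements linked by the differential), commutation with the boundary forces the two scalar actions to coincide. In both cases the endomorphism ring is the ground field $\mathbb{F}$, which is local.

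With every object decomposable as a finite direct sum of objects having local endomorphism rings, the Azumaya refinement of the Krull-Schmidt theorem (reviewed in section 2.1) guarantees that the category of filtered chain complexes is Krull-Schmidt. The characterization of indecomposables then falls out immediately: basic complexes are indecomposable by the endomorphism computation, and conversely if $C$ is indecomposable then the decomposition constructed above must consist of a single summand, forcing $C$ to be isomorphic to a basic complex. The main obstacle will be the careful bookkeeping in the first step, namely ensuring that the matrix-level hypotheses on $B$ (block-diagonal, upper triangular, invertible) translate faithfully into the categorical conditions of degree preservation, filtration preservation, and invertibility, and that the Jordan partition of $\underline{D}$ yields a genuine categorical direct sum rather than merely a block decomposition of matrices.
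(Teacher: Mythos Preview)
Your proposal is correct and follows essentially the same route as the paper: use the Matrix Structural Theorem to produce a special adapted basis and hence a decomposition into basic summands (the paper's Proposition~\ref{forego} and Corollary~\ref{deco}), check that basic filtered complexes have local endomorphism ring (Lemma~\ref{locring}), and then invoke the Krull--Schmidt machinery of Section~\ref{subsection2.1}. The only minor difference is that you compute the endomorphism ring of a basic complex by a direct degree-and-boundary argument, whereas the paper argues more abstractly by showing this ring maps isomorphically onto the endomorphism ring of the colimit interval complex; both arguments yield $\mathbb{F}$ and are equally valid.
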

\noindent This is followed by a detailed example of a Krull-Schmidt decomposition computation.  
The other  direction is proved in section 3.2:  
\begin{proposition} 
\label{invStructEquiv}  (Reverse Structural Equivalence) 
 The Categorical Structural Theorem implies the Matrix  Structural Theorem. 
\end{proposition}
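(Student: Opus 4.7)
The plan is to translate the abstract decomposition supplied by the Categorical Structural Theorem into the matrix factorization demanded by the Matrix Structural Theorem. First I would take a block-superdiagonal differential matrix $D$ and regard it as the boundary operator of a filtered chain complex $C$ relative to an adapted basis $\{e_1,\dots,e_n\}$ ordered in the standard way (by degree, then by level, with lex order to break any remaining ties). Applying the Categorical Structural Theorem produces a direct-sum decomposition $C \cong \bigoplus_i C_i$ in the category of filtered chain complexes, with each summand $C_i$ basic: either a type-$J$ summand (one closed generator at a prescribed level and degree) or a type-$K$ summand (two generators $u$, $v$ with $du = v$ at prescribed levels and degrees).

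Next I would fix an isomorphism $\phi\colon \bigoplus_i C_i \to C$ and let $\{\underline{e_1},\dots,\underline{e_n}\}$ be the basis of $C$ obtained by $\phi$-transporting the natural generators of the $C_i$'s, ordered in the same standard way. In this basis the boundary operator decouples across summands, so its matrix $\underline{D}$ consists of disjoint $J$- and $K$-Jordan blocks that are interleaved by the standard ordering; the permutation grouping each $C_i$'s generators back together brings $\underline{D}$ into literal Jordan form, witnessing the almost-Jordan property. Since $\phi$ is a chain map between graded complexes, $\underline{D}$ also inherits the block-superdiagonal structure. The change-of-basis matrix $B$ defined by $\underline{e_k} = \sum_j B_{jk} e_j$ then automatically satisfies $D = B \underline{D} B^{-1}$.

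The third step is to verify the block-diagonal and triangular properties of $B$. Block-diagonality with respect to degree is immediate: $\phi$ preserves the grading, so $\underline{e_k}$ and $e_k$ share a common degree and $B_{jk}$ vanishes whenever their degrees disagree. For triangularity, filtration-preservation by $\phi$ places $\underline{e_k}$ in $F_{\ell_k}C$, which forces $B_{jk} = 0$ whenever $\ell_j > \ell_k$; this already delivers upper-triangularity with respect to the coarser level ordering within each degree block. To refine to upper-triangularity with respect to the lex-refined ordering inside a given (level, degree) slot I would exploit the remaining freedom in the choice of decomposition: within each slot the new generators stratify naturally into image-of-$d$ elements ($v$'s), kernel-modulo-image representatives ($J$-generators), and non-closed elements ($u$'s), and a Gaussian-elimination argument matching this stratification against the lex-ordered old basis at the same slot produces a choice of generators under which the corresponding slot-block of $B$ becomes strictly upper triangular. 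Scalar normalization of the columns corresponding to kernel generators then yields the normalized form.

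The hard part will be this final refinement step: ensuring that the remaining freedom in the decomposition is rich enough to realize the upper-triangularization inside each (level, degree) slot while preserving the almost-Jordan form of $\underline{D}$. The constraints are nontrivial because any modification of the generators must respect the basic Jordan structure of each $C_i$; the delicate point is to check that the structural stratification of each slot (image, kernel-modulo-image, non-closed) is compatible with the lex-ordered stratification of the old basis so that Gaussian elimination can be carried out consistently with the summand structure, keeping $\underline{D}$ almost-Jordan throughout.
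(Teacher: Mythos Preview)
Your approach contains a genuine gap at exactly the point you identify as ``the hard part.'' You start by regarding $D$ as the boundary of \emph{some} filtered chain complex, but you never specify which one; you then inherit whatever degeneracy that unspecified filtration has, and are forced into the delicate within-slot triangularization argument. That argument is incomplete as written: you assert that the freedom in the decomposition is rich enough to triangularize each (degree, level) block while keeping $\underline D$ almost-Jordan, but you do not actually carry this out, and it is not obvious that the stratification you describe interacts with an arbitrary lex order in the way required.

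The paper sidesteps the entire issue by exploiting the freedom you overlooked: since the input to the Matrix Structural Theorem is a bare block-superdiagonal matrix $D$ with no filtration attached, one may \emph{choose} the filtered complex. The paper chooses the nondegenerate one obtained from the upper-left $p\times p$ submatrices ${}_pD$ of $D$ (Lemma~\ref{revlem}); in this filtered complex each filtration step adds exactly one basis vector, so no two adapted basis elements ever share the same (degree, level) pair. Filtration-preservation alone then forces any change of adapted basis to be genuinely upper-triangular (Lemma~\ref{trideg}), and your ``hard part'' evaporates. The remainder of your argument---decompose via the Categorical Structural Theorem, read off a special adapted basis, observe that $\underline D$ is almost-Jordan---is exactly what the paper does in Proposition~\ref{ss}.
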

\noindent Combining the Categorical Structural Theorem \ref{classFilt} and the Reverse Structural Equivalence Proposition \ref{invStructEquiv} 
yields a {\it nonconstructive} categorical proof of the Matrix Structural Theorem \ref{postnon}. 
This contrasts with the various {\it constructive} algorithmic proofs of Theorem  \ref{postnon}, 
which are discussed in Appendix B.2.  
The constructive algorithmic proofs explain {\it how} persistent homology works, the nonconstructive proof explains {\it why} 
persistent homology works.

\section{Proving the Categorical Structural Theorem}
\label{section2}

\subsection{Additive and Krull-Schmidt Categories}
\label{subsection2.1}
This section reviews the relevant background from category theory.    
General references for category theory include \cite{MacL,Awodey,AHS}.  
Additive categories are discussed in  \cite{MacL,HA}.  
Krull-Schmidt categories are discussed in  \cite{Krause,Miya,Atiyah}.  

\begin{definition}
A category is {\it additive} if:   
\begin{enumerate}[leftmargin=*]
\item Each  $Hom(X,Y)$ is an abelian group, and the morphism composition map \hfil\break
$Hom(Y,Z) \times Hom(X,Y) \to Hom(X,Z)$ is biadditive/bilinear.   
\item There exists a zero object $0$. 
\item Any finite collection of objects $X_1, X_2, \dots, X_n$ has a direct sum \hfil\break
$X_1 \oplus X_2 \oplus \cdots \oplus X_n$.  
\end{enumerate}
\end{definition}
\noindent  
An additive category is {\it linear} over the field ${\mathbb F}$  if each $Hom(X,Y)$ is a finite-dimensional  ${\mathbb F}$-vector space, and 
each map $Hom(Y,Z) \times Hom(X,Y) \to Hom(X,Z)$  describing composition of morphisms $(g,f) \mapsto g \circ f$ is ${\mathbb F}$-bilinear.   
All of the categories we will be studying are linear.

The {\it endomorphism ring} of an object $X$ in an additive category is the Abelian group $Hom(X,X)$ of endomorphisms,  
with multiplicative structure given by composition of endomorphisms.    
In a linear category, the endomorphism ring is an ${\mathbb F}$-algebra.  

\begin{definition}  A ring  is {\it local} if: 
\begin{enumerate}[leftmargin=*]
\item $1 \ne 0$. 
\item If  an element $f$ does not have a multiplicative inverse, then the element $1-f$ has a multiplicative inverse.
\end{enumerate}
\end{definition} 

\noindent  The local property is important, because 
a finite direct sum decomposition into summands with a local endomorphism rings is essentially unique:         
 
 \begin{theorem} 
 \label{krause42} 
 (e.g. \cite{Krause} Theorem 4.2)  
Let $X$ be an object in an additive category, and suppose there are two finite decompositions 
\begin{equation*}
 X_1 \oplus \cdots \oplus X_m = X = Y_1 \oplus \cdots \oplus Y_n  
\end{equation*}
into (nonzero) objects with local endomorphism rings. Then $m = n$ and there exists a permutation $\pi$ such that 
$X_i \simeq Y_{\pi(i)}$ for each $1 \le i \le m$.  
\end{theorem}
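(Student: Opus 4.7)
The plan is to proceed by induction on $m$, using two elementary consequences of locality: in a local ring $R$, (i)~if $r_1 + \cdots + r_k$ is a unit then some $r_i$ is a unit, and (ii)~the only idempotents are $0$ and $1$. Both follow immediately from the defining axiom that $f$ or $1-f$ is invertible. A first consequence of (ii) is that an object $Z$ with local endomorphism ring is indecomposable, since any nontrivial decomposition $Z = A \oplus B$ would supply a nontrivial idempotent in $\mathrm{End}(Z)$. This handles the base case $m=1$: an indecomposable object admits no nontrivial direct sum decomposition into nonzero summands, so $n=1$ and $X_1 \simeq Y_1$.

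For the inductive step, let $\iota_i,\pi_i$ be the structural maps of the first decomposition and $\iota'_j,\pi'_j$ those of the second. Expanding the identity on $X_1$,
\begin{equation*}
\mathrm{id}_{X_1} \;=\; \pi_1 \,\mathrm{id}_X\, \iota_1 \;=\; \sum_{j=1}^{n} (\pi_1 \iota'_j)(\pi'_j \iota_1),
\end{equation*}
and applying (i) inside $\mathrm{End}(X_1)$ (since $\mathrm{id}_{X_1}$ is a unit), at least one summand is a unit. After relabeling, assume $\phi\psi$ is an automorphism, where $\phi = \pi_1 \iota'_1$ and $\psi = \pi'_1 \iota_1$, and set $\alpha = (\phi\psi)^{-1}$. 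A direct calculation using $\alpha(\phi\psi) = (\phi\psi)\alpha = \mathrm{id}_{X_1}$ shows that $e := \psi\alpha\phi \in \mathrm{End}(Y_1)$ is idempotent and satisfies $\phi e = \phi$. Hence $e \neq 0$, since otherwise $\phi = 0$ and $\phi\psi$ could not be a unit. By (ii) applied to $\mathrm{End}(Y_1)$, we conclude $e = \mathrm{id}_{Y_1}$, while also $(\alpha\phi)\psi = \mathrm{id}_{X_1}$. Therefore $\psi \colon X_1 \to Y_1$ is an isomorphism with two-sided inverse $\alpha\phi$.

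It remains to substitute and cancel. Define $F \colon X_1 \oplus Y_2 \oplus \cdots \oplus Y_n \to X$ to have first component $\iota_1$ and remaining components $\iota'_2,\ldots,\iota'_n$. Its matrix relative to the $Y$-biproduct is block lower-triangular with diagonal entries $\psi,\mathrm{id}_{Y_2},\ldots,\mathrm{id}_{Y_n}$; such a matrix is invertible in any additive category by the standard block inversion formula, so $F$ is an isomorphism. In particular $X$ now carries a new biproduct decomposition $X = X_1 \oplus (Y_2 \oplus \cdots \oplus Y_n)$ whose $X_1$-inclusion is the same $\iota_1$ as in the first decomposition. A short diagram chase with the biproduct identities $\mathrm{id}_X = \iota_1 \pi_1 + \sum_{i \ge 2}\iota_i \pi_i$ and $\mathrm{id}_X = \iota_1 \rho_1 + \sum_{j \ge 2}\iota'_j \pi'_j$ (where $\rho_1$ is extracted from $F^{-1}$) then produces mutually inverse isomorphisms between the ``complementary'' objects $X_2 \oplus \cdots \oplus X_m$ and $Y_2 \oplus \cdots \oplus Y_n$. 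Both sides remain finite direct sums of objects with local endomorphism rings, so the inductive hypothesis yields $m - 1 = n - 1$ and a matching of the remaining summands up to isomorphism; together with $X_1 \simeq Y_1$ this completes the argument. The principal obstacle I expect is this final cancellation step: verifying invertibility of the block-triangular matrix and producing the complement isomorphism using only the additive (biproduct) structure, without appealing to kernels, cokernels, or abelianness.
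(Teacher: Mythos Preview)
The paper does not supply its own proof of this theorem; it is simply quoted from \cite{Krause} as background. So there is nothing to compare against, and the relevant question is only whether your argument is sound.

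It is. Your proof is the standard Krull--Schmidt argument and is correct. The two ingredients you isolate from locality are exactly what is needed: that non-units are closed under finite sums (so some $(\pi_1\iota'_j)(\pi'_j\iota_1)$ is a unit), and that idempotents are trivial (so the idempotent $e=\psi\alpha\phi$ is forced to be $\mathrm{id}_{Y_1}$, whence $\psi$ is a two-sided isomorphism). Your verification that $e$ is idempotent and that $\phi e=\phi$ is clean.

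On the step you flag as the potential obstacle: it goes through with only biproduct algebra. Your map $F$ has, in the $Y$-coordinates, a block lower-triangular matrix with diagonal $(\psi,\mathrm{id},\ldots,\mathrm{id})$, and an explicit inverse is obtained by the usual formula (subtract the subdiagonal column times $\psi^{-1}$). This yields a second biproduct $X = X_1 \oplus Y'$ with $Y'=Y_2\oplus\cdots\oplus Y_n$, in which the $X_1$-inclusion is the \emph{same} $\iota_1$ as in the original $X$-decomposition. The complement isomorphism is then purely formal: with $X'=X_2\oplus\cdots\oplus X_m$ and the obvious projections/inclusions, set $g=\pi_{Y'}\iota_{X'}$ and $h=\pi_{X'}\iota_{Y'}$. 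Using $\iota_{X'}\pi_{X'}=\mathrm{id}_X-\iota_1\pi_1$ and $\iota_{Y'}\pi_{Y'}=\mathrm{id}_X-\iota_1\rho_1$ together with $\pi_{Y'}\iota_1=0$ and $\pi_{X'}\iota_1=0$, one gets $gh=\mathrm{id}_{Y'}$ and $hg=\mathrm{id}_{X'}$. No kernels, cokernels, or abelian hypotheses are used.
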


  An object $X$ in an additive category is  {\it decomposable} if it is the direct sum $X = Y \oplus Z$ of 
  two nonzero objects $Y$ and $Z$.     An {\it indecomposable object}, often abbreviated as an {\it indecomposable}, 
  is a nonzero object that is not decomposable.    
  
  \begin{lemma} 
  \label{locind}
  An object   is indecomposable if it has a local endomorphism ring.  
  \end{lemma}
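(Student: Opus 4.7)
The plan is to convert an arbitrary direct sum decomposition of $X$ into a pair of orthogonal idempotents in the endomorphism ring $R = Hom(X,X)$, and then use the local property of $R$ to rule out nontriviality of the decomposition.

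First I would dispose of the case $X = 0$: since $R$ is local by hypothesis, $1 \neq 0$ in $R$, which forces $X \neq 0$, so $X$ is a candidate for indecomposability. Next, suppose $X = Y \oplus Z$ for some objects $Y$ and $Z$. The data of such a direct sum comes equipped with inclusion morphisms $i_Y \colon Y \to X$, $i_Z \colon Z \to X$ and projection morphisms $p_Y \colon X \to Y$, $p_Z \colon X \to Z$ satisfying $p_Y i_Y = 1_Y$, $p_Z i_Z = 1_Z$, $p_Y i_Z = 0$, $p_Z i_Y = 0$, together with the completeness relation $i_Y p_Y + i_Z p_Z = 1_X$. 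Setting $e = i_Y p_Y$ and $f = i_Z p_Z$, I would record the three facts I need: $e + f = 1_X$, $e^2 = e$, $f^2 = f$, and $ef = fe = 0$, all of which follow by direct substitution from the identities above.

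Now the local property of $R$ comes in: since $f = 1_X - e$, at least one of $e$ or $f$ must be invertible in $R$ (if $e$ is not invertible, then $1_X - e = f$ is invertible by the local property, and vice versa). The key observation is then that an invertible idempotent in any ring must equal the identity: if $e^2 = e$ and there exists $g \in R$ with $ge = 1_X$, then $e = (ge)e = g e^2 = ge = 1_X$. So whichever of $e$, $f$ is invertible is forced to equal $1_X$, and the other is forced to equal $0$. In the case $f = 0$, I would chase through $1_Z = p_Z i_Z = p_Z \cdot 1_X \cdot i_Z = p_Z (i_Y p_Y + i_Z p_Z) i_Z$ and use $p_Z i_Y = 0$ to obtain $1_Z = (p_Z i_Z)(p_Z i_Z) \cdot \ldots$; more cleanly, from $f = i_Z p_Z = 0$ I would compose with $i_Z$ on the right to get $i_Z = i_Z (p_Z i_Z) = i_Z \cdot 1_Z = i_Z$, and with $p_Z$ on the left to get $p_Z = (p_Z i_Z) p_Z = p_Z$—neither informative. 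The cleanest route is to note $1_Z = p_Z i_Z = p_Z (i_Y p_Y + i_Z p_Z) i_Z = p_Z \cdot 1_X \cdot i_Z$, and then substitute $1_X = e$ to get $1_Z = p_Z e i_Z = p_Z i_Y p_Y i_Z = 0$, so $Z$ is a zero object. The symmetric case $e = 0$ forces $Y = 0$.

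I expect this last bookkeeping step—translating the ring-theoretic conclusion ``$e = 1$ or $f = 1$'' back into the categorical conclusion ``$Y = 0$ or $Z = 0$''—to be the only real fiddly part, though it is entirely mechanical. The conceptual content is entirely contained in the idempotent manipulation together with the local property of $R$. The conclusion is that any direct sum decomposition $X = Y \oplus Z$ has a zero summand, so $X$ is indecomposable.
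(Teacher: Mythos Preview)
Your proof is correct and takes essentially the same approach as the paper: both exploit the projection idempotents $e = 1_Y \oplus 0_Z$ and $1-e = 0_Y \oplus 1_Z$ associated to a direct sum decomposition, together with the local property, to force one summand to be zero. The paper phrases it as the contrapositive (if $X$ is decomposable with both summands nonzero, then neither idempotent is invertible, so the ring is not local) and is terser, while you argue directly and spell out the ``invertible idempotent equals identity'' step; the mathematical content is the same, though your final bookkeeping paragraph wanders before landing on the clean computation $1_Z = p_Z e i_Z = (p_Z i_Y)(p_Y i_Z) = 0$.
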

  
  \begin{proof} We will show that the  endomorphism 
  ring of a decomposable object $X$ is not local.   We may assume that $X = Y \oplus Z$ with 
  $Y$ and $Z$ nonzero.  Then  
neither $f = 1_{Y} \oplus \, 0_{Z}$ nor 
  $1_X - f = 1_{Y} \oplus \, 1_{Z} - f = 0_{Y} \oplus \, 1_{Z}$ has 
  a multiplicative inverse in the ring $Hom(X,X) = Hom(Y \oplus Z,Y \oplus Z)$.        
\qed \end{proof}
 \noindent
A Krull-Schmidt category has properties that guarantee both the existence and essential uniqueness of finite direct sum decompositions of any object, 
see e.g. \cite{Krause,Miya} for more details:

\begin{definition} 
\label{ksdefTwo}
An additive category is Krull-Schmidt if: 
\begin{enumerate}[leftmargin=*]
\item  Every object admits a finite decomposition as a sum of indecomposables.  
\item  Every indecomposable  has a local endomorphism ring.  
 \end{enumerate} 
 \end{definition}

  Recall that  an additive category is {\it Abelian} if every morphism has a kernel and a cokernel,  every monic morphism is  normal 
 (is the kernel of some morphism), and every epic morphism is conormal (is the cokernel of some morphism). 
 Additional information about Abelian categories is outlined in Appendix \ref{subsection5.2}.  
 Note that  Definition \ref{ksdefTwo} of Krull-Schmidt category does not assume that the additive category is Abelian, or even 
 the existence of kernels and cokernels.  
We are primarily interested in the  linear category of filtered chain complexes, which is not Abelian.     
 But we will use Abelian categories and their subcategories to show that this category is nonetheless Krull-Schmidt.  
 Atiyah's Criterion \cite{Atiyah,Krause} provides a very general sufficient condition for an Abelian category to be Krull-Schmidt. 
 Since all of our categories are linear, we will only need the following special case:   
 
 \begin{theorem}
 \label{Atiyah} (Atiyah's Criterion) 
A linear Abelian category is Krull-Schmidt. 
\end{theorem}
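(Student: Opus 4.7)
The plan is to verify both defining axioms of a Krull-Schmidt category (Definition \ref{ksdefTwo}) directly in the given linear Abelian category $\mathcal A$. The two key resources available are that every endomorphism ring $End(X)$ is a finite-dimensional $\mathbb F$-algebra, and that idempotents split in $\mathcal A$: an idempotent $e \in End(X)$ determines a direct-sum decomposition $X \cong \kernel(1-e) \oplus \kernel(e)$ via the kernel/cokernel machinery of an Abelian category (see Appendix \ref{subsection5.2}).

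First I would establish the local endomorphism ring property for an arbitrary indecomposable $X$. Given $f \in End(X)$, finite dimensionality forces a nonzero polynomial relation $p(f) = 0$ with $p(t) \in \mathbb F[t]$. Factoring $p(t) = t^k q(t)$ with $q(0) \neq 0$ makes $t^k$ and $q(t)$ coprime, and a Bezout identity $1 = t^k a(t) + q(t) b(t)$ evaluates at $f$ to produce a pair of orthogonal idempotents $e := q(f) b(f)$ and $1 - e = f^k a(f)$ in $End(X)$. Since idempotents split and $X$ is indecomposable, either $e = 0$ or $e = 1_X$. In the first case $f^k a(f) = 1_X$ forces $f$ to be an isomorphism; in the second, $q(f)$ is invertible and $f^k q(f) = p(f) = 0$ forces $f^k = 0$. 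Thus every endomorphism is either invertible or nilpotent. If $f$ fails to be a unit it is nilpotent, so $1 - f$ is inverted by $\sum_{j=0}^{k-1} f^j$, giving the local-ring property.

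For the decomposition axiom I would induct on $d = \dim_{\mathbb F} End(X)$. If $X = 0$ the empty sum of indecomposables suffices, and if $X$ is nonzero and indecomposable we are done. Otherwise $X \cong Y \oplus Z$ with both summands nonzero, and the additive identity $End(Y \oplus Z) \cong End(Y) \oplus Hom(Y,Z) \oplus Hom(Z,Y) \oplus End(Z)$ together with $1_Y, 1_Z \neq 0$ yields strict inequalities $\dim End(Y), \dim End(Z) < d$, so the inductive hypothesis decomposes both $Y$ and $Z$ into finitely many indecomposables, and hence $X$ as well.

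The principal obstacle is the idempotent construction in the first step: one must produce a categorical splitting from an arbitrary non-invertible endomorphism without any a priori finite-length or simplicity hypothesis on $X$. The polynomial factorization plus Bezout trick accomplishes this, but it relies crucially on both features of the hypothesis---finite dimensionality of $End(X)$ to extract the polynomial relation, and the Abelian structure to realize the resulting algebraic idempotents as a genuine categorical direct sum. Without either ingredient, one could manufacture orthogonal idempotents in $End(X)$ that nevertheless fail to split $X$ itself.
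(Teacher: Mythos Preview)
The paper does not actually supply a proof of this theorem: it is stated with citations to Atiyah \cite{Atiyah} and Krause \cite{Krause}, and the only further remark is that ``the proof of Atiyah's Criterion is nonconstructive.''  So there is no paper proof to compare against; your proposal stands as an independent, self-contained argument, and it is correct.

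Your approach is in fact somewhat different from Atiyah's original.  Atiyah works with a bi-chain condition (every object has finite length in a suitable sense) and deduces the Krull--Schmidt property from that; the linear hypothesis enters only to guarantee the chain conditions.  You instead exploit finite dimensionality of $End(X)$ directly, via the minimal-polynomial / Fitting-style argument: extract a polynomial relation $p(f)=0$, split $p(t)=t^k q(t)$ with $q(0)\neq 0$, and use a B\'ezout identity to manufacture an idempotent in the commutative subalgebra $\mathbb F[f]\subseteq End(X)$, which the Abelian structure then splits.  This is closer in spirit to the standard proof that a finite-dimensional algebra with no nontrivial idempotents is local.  The induction on $\dim_{\mathbb F} End(X)$ for the decomposition axiom is clean and avoids any appeal to Noetherian/Artinian conditions on the objects themselves.

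One small point worth making explicit in a write-up: in the case $e=0$ you conclude ``$f^k a(f)=1_X$ forces $f$ to be an isomorphism.''  This is fine for $k\geq 1$ since $f^{k-1}a(f)$ is then a two-sided inverse (everything commutes in $\mathbb F[f]$); for $k=0$ the relation $p(f)=0$ already has nonzero constant term and inverts $f$ directly.  Either way the conclusion holds, but the $k=0$ edge case deserves a word.
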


\noindent The proof of Atiyah's Criterion is nonconstructive. It 
neither provides an 
algorithm to decompose a given object as a direct sum of indecomposables,
nor 
a  classification of  indecomposables.

\subsection{Persistence Objects and Filtered Objects} 
\label{subsection2.2}

Persistence objects \cite{Car} and filtered objects \cite{HA} are described by categorical diagrams.  
 Suppose that  ${\mathcal X}$ is a linear Abelian category (and therefore Krull-Schmidt by Theorem \ref{Atiyah}).  
 We will study persistence indexed by an integer $p \in {\mathbb Z}$,  with $\le$
 denoting the standard partial order.  
A {\it persistence object} in ${\mathcal X}$ is a diagram ${\prescript{}{\bullet} X}$ in the category ${\mathcal X}$ of  type 
\begin{equation*}
\begin{tikzcd}[row sep=tiny, column sep=normal] 
      \cdots \ar{r}  & \prescript{}{p-1} X \ar{r} &  \prescript{}{p} X   
      \ar{r} & \prescript{}{p+1} X \ar{r} & \cdots.    \\  
     \end{tikzcd}
\end{equation*}
\noindent  
A morphism of persistence objects  $\prescript{}{\bullet} f: \prescript{}{\bullet} X \to \prescript{}{\bullet} X'$ is a commutative diagram of ``ladder"  type  
\begin{equation*}
\begin{tikzcd}[row sep=normal, column sep=normal] 
\cdots \ar{r}  & \prescript{}{p-1} X  \ar{d}[left]{ \prescript{}{p-1} f} \ar{r} &  \prescript{}{p} X \ar{d}[left]{\prescript{}{p} f}
      \ar{r} & \prescript{}{p+1} X  \ar{d}[left]{\prescript{}{p+1} f} \ar{r} & \cdots \\
   \cdots \ar{r} &\prescript{}{p-1} X' \ar{r}   &  \prescript{}{p} X'   
      \ar{r}   & {\prescript{}{p+1} X'} \ar{r} & \cdots\\
     \end{tikzcd}
\end{equation*}
The category of persistence objects in ${\mathcal X}$ is Abelian, with pointwise kernels,  cokernels, and direct sums.  
The set of  morphisms  
$\prescript{}{\bullet} X \to \prescript{}{\bullet} X'$ between two persistence objects is a vector space, 
but not finite-dimensional in general.  
We will say a categorical diagram is {\it tempered} if all but finitely many of its arrows are  iso(morphisms).  
A tempered diagram has a global finiteness property, distinct from the relative finiteness property normally 
conferred by the term ``tame".    
The set of morphisms $\prescript{}{\bullet} X \to \prescript{}{\bullet} X'$ between two tempered persistence objects is a finite-dimensional 
vector space.  This is because $\prescript{}{p\pm 1} f \circ \alpha = \beta \circ \prescript{}{p} f$ in a commutative square with parallel isomorphisms 
$\alpha$ and $\beta$, determining $\prescript{}{p\pm 1} f$  in terms of $ \prescript{}{p} f$.   
  The tempered persistence objects comprise a strictly full Abelian subcategory of the persistence objects.     
Theorem \ref{Atiyah} now yields:   

\begin{proposition}
\label{persKS}  Let ${\mathcal X}$ be a linear Abelian category. 
 The category of tempered persistence objects in ${\mathcal X}$  is Krull-Schmidt.  
\end{proposition}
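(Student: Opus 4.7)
The plan is to apply Atiyah's Criterion (Theorem \ref{Atiyah}) directly to the category of tempered persistence objects. The excerpt already states that the ambient category of persistence objects in $\mathcal{X}$ is Abelian with kernels, cokernels, and direct sums computed pointwise, and that the tempered objects form a strictly full Abelian subcategory. So my two tasks are (i) to make precise why the tempered subcategory is closed under the Abelian operations, and (ii) to establish that it is \emph{linear} in the sense of Definition 2.1, i.e.\ that all Hom-spaces are finite-dimensional $\mathbb{F}$-vector spaces. Once both are in place, Theorem \ref{Atiyah} yields the conclusion immediately.

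For (i), I would argue as follows. Let $\prescript{}{\bullet} f \colon \prescript{}{\bullet} X \to \prescript{}{\bullet} X'$ be a morphism of tempered persistence objects. Pick an integer $N$ large enough that for every $|p| \ge N$ the transition arrows $\prescript{}{p} X \to \prescript{}{p+1} X$ and $\prescript{}{p} X' \to \prescript{}{p+1} X'$ are isomorphisms. In an Abelian category, taking the (pointwise) kernel of two parallel isomorphisms in a commuting square produces an isomorphism of the kernels, and similarly for cokernels; this is a standard consequence of the universal property. Hence the transition arrows of the pointwise kernel and cokernel of $\prescript{}{\bullet} f$ are isomorphisms for $|p| \ge N$, so both are tempered. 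Direct sums, the zero object, and the abelian-group structure on Hom are clearly preserved, so the tempered subcategory inherits the Abelian structure and is strictly full.

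For (ii), the $\mathbb{F}$-vector space structure on each Hom-space is inherited pointwise from $\mathcal{X}$. Finite-dimensionality is the substantive point, and the excerpt already indicates the mechanism: in a commutative square
\[
\prescript{}{p+1} f \circ \alpha \;=\; \beta \circ \prescript{}{p} f
\]
with parallel isomorphisms $\alpha$ and $\beta$, either of $\prescript{}{p} f$ or $\prescript{}{p+1} f$ determines the other by $\prescript{}{p+1} f = \beta \circ \prescript{}{p} f \circ \alpha^{-1}$. Choosing $N$ as above for the pair $\prescript{}{\bullet} X, \prescript{}{\bullet} X'$, an iterated application of this observation on both tails shows that a morphism $\prescript{}{\bullet} f$ is determined by its values at the finitely many indices $-N \le p \le N$. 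Evaluation therefore gives an injection
\[
\mathrm{Hom}\bigl(\prescript{}{\bullet} X, \prescript{}{\bullet} X'\bigr) \;\hookrightarrow\; \bigoplus_{p=-N}^{N} \mathrm{Hom}_{\mathcal{X}}\bigl(\prescript{}{p} X, \prescript{}{p} X'\bigr),
\]
and the right-hand side is finite-dimensional because $\mathcal{X}$ is assumed linear. Composition of morphisms is $\mathbb{F}$-bilinear since it is componentwise so. This completes the verification that the tempered persistence category is linear Abelian.

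The main obstacle is the closure argument in (i): one has to be a little careful to ensure that the kernel and cokernel of $\prescript{}{\bullet} f$ are \emph{themselves} persistence objects (i.e., come equipped with transition arrows) and that these transition arrows are isomorphisms on the tails. Everything else is either stated in the excerpt or is a routine translation of the pointwise structure. With (i) and (ii) verified, Theorem \ref{Atiyah} applies and the category of tempered persistence objects in $\mathcal{X}$ is Krull-Schmidt, establishing Proposition \ref{persKS}.
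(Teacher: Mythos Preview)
Your proposal is correct and follows essentially the same approach as the paper: the paper states (in the paragraph immediately preceding the proposition) that tempered persistence objects form a strictly full Abelian subcategory with finite-dimensional Hom-spaces, and then invokes Atiyah's Criterion (Theorem~\ref{Atiyah}) without further ado. Your write-up simply makes explicit the two verifications---closure under kernels/cokernels and finite-dimensionality of Hom---that the paper sketches in that preceding paragraph.
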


We next discuss subobjects in a linear Abelian category ${\mathcal X}$.    
We  make  the additional assumption that the category ${\mathcal X}$ is concrete,  meaning 
that an object  in ${\mathcal X}$ is  a set with  some additional features, 
and a morphism in ${\mathcal X}$  is a map of sets compatible with the additional features.  
For example,   the linear Abelian category  ${\mathcal V}$ of (finite-dimensional) vector spaces is a concrete linear Abelian category.       
An {\it inclusion }  $X \hookrightarrow X'$  in ${\mathcal X}$ is an arrow  
that is an inclusion of the underlying sets.   We say $X$ is a {\it subobject} of $X'$ iff such an inclusion arrow 
 exists.  An inclusion arrow is  monic \cite{MacL,Awodey}, and the composition of inclusion arrows is an inclusion arrow.  
Any object $X'$ in ${\mathcal X}$ has a zero  subobject $0 \hookrightarrow X'$, and is its own subobject $X' \hookrightarrow X'$.  
A subobject $X \hookrightarrow X'$ is {\it proper} if $X \ne X'$.  
A nonzero object is said to be {\it simple} if it does not have a  proper nonzero subobject.       
A simple object is obviously indecomposable,  
but an indecomposable object need not be simple.

A filtered object in a concrete linear Abelian category ${\mathcal X}$ is a special type of tempered persistence object in ${\mathcal X}$.    
We say a tempered persistence object $\prescript{}{\bullet} X$ in ${\mathcal X}$  is {\it bounded below} if there exists an 
integer $j$ such that $\prescript{}{j} X = 0$ whenever $p \le j$.  
We say $\prescript{}{\bullet} X$  is a   
  {\it filtered object} if it is bounded below and if   
every arrow is an inclusion arrow: 
\begin{equation*}
\begin{tikzcd}[row sep=normal, column sep=normal] 
      \cdots \ar[hook]{r}  & \prescript{}{p-1} X \ar[hook]{r} &  \prescript{}{p} X   
      \ar[hook]{r} & \prescript{}{p+1} X \ar[hook]{r} & \cdots   .\\  
     \end{tikzcd}
\end{equation*}
\noindent The   filtered objects in ${\mathcal X}$ comprise  a strictly full subcategory of the tempered persistence objects. 
The properties of monics  have several consequences.  
A filtered object diagram has a categorical limit and a colimit   \cite{MacL,Awodey}. 
The limit is $0$ since the diagram is bounded below.  
The colimit $X$ is  $\prescript{}{k} X$ for $k$ sufficiently large  (satisfying $\prescript{}{p} X = X$ whenever $k \le p$).  
Finally, any summand of a filtered object is isomorphic to a filtered object. 
Combining these facts with Proposition \ref{persKS} yields: 
\begin{lemma}
\label{crass}
 Let ${\mathcal X}$ be a linear Abelian category.  The category of filtered objects in ${\mathcal X}$ is Krull-Schmidt. 
A filtered object $\prescript{}{\bullet} X$ in ${\mathcal X}$ is indecomposable iff its colimit  $X$ is an indecomposable object in ${\mathcal X}$.    
\end{lemma}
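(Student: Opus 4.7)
My plan is to prove the two assertions of the lemma in sequence, leveraging Proposition \ref{persKS} together with the observations about filtered objects established immediately before the statement.

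For the Krull-Schmidt assertion, I would note that the category of filtered objects is a strictly full subcategory of the category of tempered persistence objects, which is Krull-Schmidt by Proposition \ref{persKS}. Any filtered object $\prescript{}{\bullet}X$ therefore admits a Krull-Schmidt decomposition in the tempered persistence category, and by the preceding remark that every summand of a filtered object is isomorphic to a filtered object, each summand is already a filtered object. Fullness preserves endomorphism rings, so the local endomorphism rings granted by Krull-Schmidt in the ambient tempered persistence category persist to the filtered subcategory; Lemma \ref{locind} then shows that each summand is also indecomposable as a filtered object, verifying both Krull-Schmidt axioms.

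For the indecomposability criterion, I would use that the colimit functor $C$ from filtered objects to $\mathcal{X}$ is additive and sends nonzero filtered objects to nonzero objects of $\mathcal{X}$ (since the arrow $\prescript{}{p}X \hookrightarrow X$ is monic, any nonzero $\prescript{}{p}X$ forces $X \neq 0$). The reverse direction — an indecomposable colimit forces an indecomposable filtered object — then follows by contrapositive: any nontrivial filtered decomposition of $\prescript{}{\bullet}X$ descends via $C$ to a nontrivial decomposition of $X$.

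The forward direction — that an indecomposable filtered object has an indecomposable colimit — is, I expect, the main obstacle. Arguing by contrapositive, I would start from a putative decomposition $X = Y \oplus Z$ with both summands nonzero and try to lift it to a filtered decomposition of $\prescript{}{\bullet}X$. Using the concreteness of $\mathcal{X}$, one naturally forms the sub-filtered objects whose $p$-th levels are $\prescript{}{p}X \cap Y$ and $\prescript{}{p}X \cap Z$; these yield a levelwise-monic filtered morphism into $\prescript{}{\bullet}X$ that is an isomorphism on colimits but need not be a levelwise isomorphism, as a diagonal subspace in $k^{2}$ already warns. My plan for overcoming this is to use the tempered hypothesis (only finitely many distinct filtration levels) together with the Krull-Schmidt property of $\mathcal{X}$ afforded by Atiyah's criterion, to adjust the decomposition of $X$ within its isomorphism class and produce a choice $X = Y' \oplus Z'$ for which the intersection decomposition agrees with $\prescript{}{p}X$ at every level. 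This yields a nontrivial filtered decomposition of $\prescript{}{\bullet}X$, contradicting indecomposability and completing the argument.
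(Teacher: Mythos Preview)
Your Krull-Schmidt argument is exactly the paper's: pass to the ambient Krull-Schmidt category of tempered persistence objects via Proposition~\ref{persKS}, use that summands of filtered objects are filtered, and use fullness to retain local endomorphism rings. The reverse direction of the indecomposability criterion is likewise the same.

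For the forward direction you are actually more careful than the paper. The paper's entire argument is the single sentence following the lemma: ``a filtered object $\prescript{}{p}Z$ with a decomposable colimit $X \oplus Y$ decomposes as the direct sum of the filtered objects $\prescript{}{p}Z \cap X$ and $\prescript{}{p}Z \cap Y$.'' As your diagonal example in $k^2$ correctly shows, this fails for an arbitrary decomposition $X \oplus Y$ of the colimit: the levelwise intersections need not span $\prescript{}{p}Z$. The paper elides this point. Your proposed repair---use the finiteness of the filtration together with the Krull-Schmidt property of $\mathcal{X}$ to replace $X \oplus Y$ by a decomposition $Y' \oplus Z'$ compatible with every filtration level---is the right idea and is what the paper's sentence should be read as implicitly asserting. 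The details you leave vague can be filled in by a straightforward induction on the number of distinct filtration levels, at each step splitting off a summand supported in the top graded piece.
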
 
Here a filtered object $\prescript{}{p}  Z$ with a decomposable colimit $X \oplus Y$ decomposes as the direct sum 
of the filtered objects $\prescript{}{p}  Z \cap X$ and $\prescript{}{p}  Z \cap Y$.

We note that the filtered objects comprise a subcategory of the tempered persistence objects, but 
this subcategory is not  Abelian because a morphism of filtered objects may have a kernel and/or cokernel that is not a filtered object.  
So Lemma \ref{crass} is not merely a corollary of Theorem  \ref{Atiyah}.   Finally we observe that the category of persistence objects in a 
(concrete) linear 
Abelian category is itself a (concrete) linear Abelian category, to which Lemma \ref{crass} applies.

\subsection{Chain Complexes and Filtered Chain Complexes}  
\label{subsection2.3}

A {\it persistence vector space} is a 
persistence object in the (concrete) linear Abelian category  ${\mathcal X} ={\mathcal V}$ of (finite-dimensional) ${\mathbb F}$-vector spaces.  
Tempered persistence vector spaces are well-understood via the theory of quiver representations.    
A nonempty subset $I \subseteq {\mathbb Z}$ will be called an {\it interval} if $c \in I$ 
whenever $a \le c \le b$ with $a \in I$ and $b \in I$.   
We associate to an interval  $I \subseteq {\mathbb Z}$ the {\it interval persistence vector space} 
 $\prescript{}{\bullet} I$ constructed as follows:  
$\prescript{}{p} I = {\mathbb F}$ whenever $p \in I$,  
$\prescript{}{p} I = 0$ whenever $p \notin I$, 
and every arrow ${\mathbb F} \to {\mathbb F}$ is the identity morphism $1$.    
We will often omit the bullet prescript when context allows.  
 For example, the  interval persistence vector space $[1,4) = \prescript{}{\bullet}  [1,4)$  is the diagram of vector spaces 
%
\begin{equation*}
\begin{tikzcd}[row sep=tiny, column sep=small] 
      \cdots \ar{r}  & 0  \ar{r} &{\mathbb F}   
      \ar{r}{1}& {\mathbb F}   \ar{r}{1} & {\mathbb F}  \ar{r}& 0 \ar{r} & \cdots   \\  
    &  p =0 & p = 1 & p = 2 &  p = 3 & p = 4  \\
     \end{tikzcd}
\end{equation*}
associated to the interval $[1,4) =\{ 1, 2, 3 \} \subseteq {\mathbb Z}$.  
Proposition \ref{persKS} applies to the linear Abelian category of tempered persistence vector spaces.      
Furthermore, the well-studied representation theory of  $A_n$ quivers (see e.g. \cite{Shiffler}) 
carries over by a  limiting argument to prove the following structural theorem for the 
category of tempered persistence vector spaces:   

\begin{theorem}  
\label{quiver} 
The category of tempered persistence vector spaces is Krull-Schmidt.  
A tempered persistence vector space is indecomposable iff it is isomorphic to an interval. 
\end{theorem}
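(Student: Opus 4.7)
The plan is to combine Proposition \ref{persKS} with the classical representation theory of finite type-$A$ quivers, via a reduction afforded by the tempered hypothesis. The Krull-Schmidt assertion is immediate from Proposition \ref{persKS} applied to ${\mathcal X} = {\mathcal V}$, the linear Abelian category of finite-dimensional ${\mathbb F}$-vector spaces, so the real content is the classification of indecomposables. For the backward direction, an endomorphism of the interval persistence vector space $\prescript{}{\bullet} I$ is a family of scalars $(\lambda_p)_{p \in I}$, and commutativity at each identity arrow ${\mathbb F} \to {\mathbb F}$ forces $\lambda_p = \lambda_{p+1}$ for consecutive $p, p+1 \in I$. Since $I$ is connected, all of the $\lambda_p$ coincide, so $Hom(\prescript{}{\bullet} I, \prescript{}{\bullet} I) \cong {\mathbb F}$ is local, and $\prescript{}{\bullet} I$ is indecomposable by Lemma \ref{locind}.

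For the forward direction, I would decompose an arbitrary tempered persistence vector space $\prescript{}{\bullet} V$ as a direct sum of intervals, from which indecomposability will force it to coincide with a single interval. By the tempered hypothesis there exist integers $a \le b$ such that the arrow $\alpha_p : \prescript{}{p} V \to \prescript{}{p+1} V$ is an isomorphism whenever $p < a$ or $p \ge b$. The restriction of $\prescript{}{\bullet} V$ to the vertices $\{a, a+1, \ldots, b\}$ is then a finite-dimensional representation of the type-$A_{b-a+1}$ quiver, which by the standard classification of $A_n$ representations (see e.g.\ \cite{Shiffler}) decomposes as a direct sum of interval summands $M_{[c,d]}$ with $a \le c \le d \le b$.

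The remaining step---and the main technical subtlety---is to lift this finite-quiver decomposition back to a decomposition of the full ${\mathbb Z}$-indexed diagram $\prescript{}{\bullet} V$. A summand $M_{[c,d]}$ with strictly interior support (i.e.\ $a < c$ and $d < b$) extends by zero at every index outside $[c,d]$. A summand with $c = a$ or $d = b$ instead propagates along the frozen isomorphisms on the corresponding tail: the inverses $\alpha_{a-1}^{-1}, \alpha_{a-2}^{-1}, \ldots$ transport the decomposition of $\prescript{}{a} V$ coherently through every earlier index, and symmetrically $\alpha_b, \alpha_{b+1}, \ldots$ transport it rightward. The hard part is to verify that, because the tail arrows are isomorphisms, this propagation is canonically determined and compatible with the direct sum structure, so no additional decomposition ambiguity enters along the infinite tails. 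The resulting extensions are precisely the interval persistence vector spaces $\prescript{}{\bullet} I$ for the four possible shapes of interval $I \subseteq {\mathbb Z}$ (bounded, left ray, right ray, or all of ${\mathbb Z}$), and indecomposability of $\prescript{}{\bullet} V$ therefore forces it to be isomorphic to a single such interval.
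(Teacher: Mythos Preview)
Your proposal is correct and follows essentially the same approach as the paper: the paper invokes Proposition~\ref{persKS} for the Krull-Schmidt property and then says only that the $A_n$ quiver classification ``carries over by a limiting argument,'' which is precisely the restriction-to-$\{a,\dots,b\}$-then-propagate-along-iso-tails construction you have spelled out. Your explicit computation that $\mathrm{End}(\prescript{}{\bullet} I)\cong{\mathbb F}$ for the backward direction is a detail the paper omits but is entirely in the same spirit.
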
 

Theorem \ref{quiver} can be applied to cochain complexes.  
A {\it cochain complex}, or  {\it cocomplex} for short,  is a tempered persistence vector space $\prescript{}{\bullet} V$   
\begin{equation*}
\begin{tikzcd}[row sep=tiny, column sep=normal] 
      \cdots \ar{r}  & \prescript{}{p-1} V \ar{r}{\partial^p} &  \prescript{}{p} V   
      \ar{r}{\partial^{p+1}} & \prescript{}{p+1} V \ar{r} & \cdots     \\  
     \end{tikzcd}
\end{equation*}
with the property that 
the composition of successive arrow $\partial^{p+1} \circ \partial^p$ is zero.  
The kernel of a morphism between cocomplexes is a cocomplex, as is the cokernel, so the 
cocomplexes comprise a strictly full Abelian subcategory of the tempered persistence vector spaces.  
Theorem \ref{Atiyah} and Theorem \ref{quiver} now yield the structural result:  
  
\begin{proposition}
\label{cocomp} The category ${\mathcal C}^{op}\,$of  cocomplexes  is linear and Abelian, and therefore Krull-Schmidt. 
A  cocomplex is indecomposable iff it is isomorphic to an interval cocomplex. 
\end{proposition}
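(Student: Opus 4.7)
The plan is to verify in turn that $\mathcal{C}^{op}$ is a strictly full Abelian subcategory of the tempered persistence vector spaces, invoke Atiyah's Criterion (Theorem \ref{Atiyah}) to obtain the Krull-Schmidt property, and then read off the classification of indecomposables from Theorem \ref{quiver}.

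First I would verify that $\mathcal{C}^{op}$ is Abelian. Linearity, the zero object, and closure under finite direct sums are inherited pointwise from the ambient category of tempered persistence vector spaces, since the condition $\partial^{p+1}\circ\partial^p=0$ is preserved by pointwise operations. For closure under kernels and cokernels, given a morphism $\prescript{}{\bullet}f\colon \prescript{}{\bullet}V\to\prescript{}{\bullet}W$ of cocomplexes, the pointwise kernel and cokernel carry induced differentials (obtained by restriction and descent through the ladder squares of $\prescript{}{\bullet}f$) whose squares vanish, being restrictions or quotients of $\partial^{p+1}\circ\partial^p=0$. Morphisms in $\mathcal{C}^{op}$ are precisely the morphisms of underlying persistence vector spaces between cocomplex objects, so monics and epics in $\mathcal{C}^{op}$ coincide with those in the ambient category; normality and conormality then transfer to $\mathcal{C}^{op}$ because the kernel/cokernel witnesses already lie there. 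Atiyah's Criterion now delivers the Krull-Schmidt property.

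For the classification of indecomposables, let $\prescript{}{\bullet}V$ be a cocomplex and apply Theorem \ref{quiver} to its underlying persistence vector space to obtain an isomorphism $\prescript{}{\bullet}V\cong \bigoplus_i \prescript{}{\bullet}I_i$ with each $\prescript{}{\bullet}I_i$ an interval persistence vector space. Because this is an isomorphism of persistence vector spaces, every persistence arrow $\partial^p$ of $\prescript{}{\bullet}V$ is block-diagonal with respect to this decomposition, so each $\prescript{}{\bullet}I_i$ inherits a differential whose square, being a diagonal block of $\partial^{p+1}\circ\partial^p=0$, also vanishes. Hence each summand $\prescript{}{\bullet}I_i$ is itself a cocomplex, which I call an interval cocomplex (and note in passing that the identity-arrow structure of an interval forces such interval cocomplexes to have length at most two). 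An indecomposable cocomplex must therefore be isomorphic to a single interval cocomplex; conversely, any interval cocomplex is indecomposable already as a persistence vector space by Theorem \ref{quiver}, and indecomposability is inherited by the full subcategory $\mathcal{C}^{op}$.

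The main subtlety is coordinating the two decomposition theories: I must ensure that the isomorphism produced by Theorem \ref{quiver} is genuinely an isomorphism in $\mathcal{C}^{op}$, not merely in the ambient persistence vector space category. This is exactly where the block-diagonality of the differentials does the work, certifying that the persistence-vector-space decomposition automatically refines to a cocomplex decomposition, so the classification drops out of Theorem \ref{quiver} without any new quiver-theoretic input.
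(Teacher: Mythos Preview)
Your proposal is correct and follows essentially the same route as the paper: show $\mathcal{C}^{op}$ is a strictly full linear Abelian subcategory of tempered persistence vector spaces, invoke Atiyah's Criterion, and then read the indecomposables off Theorem~\ref{quiver}. One small point of framing: the ``subtlety'' you flag in the last paragraph is slightly misplaced, since $\mathcal{C}^{op}$ is a \emph{full} subcategory, so any persistence-vector-space isomorphism between cocomplexes is automatically a $\mathcal{C}^{op}$-isomorphism; the genuine content of your block-diagonality argument is that each interval summand is itself a cocomplex (equivalently, has length at most two), which is exactly what is needed.
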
 

Chain complexes are dual to cochain complexes. 
A {\it  complex} (short for chain complex)  is a tempered diagram $V_\bullet$ in ${\mathcal V}$ of  type 
%
\begin{equation*}
\begin{tikzcd}[row sep=tiny, column sep=normal] 
      \cdots  & \ar{l} V_{n-2} &   \ar{l}{\partial_{n-1}}   V_{n-1}  
     &  \ar{l}{\partial_{n}}   V_{n} \ar{l} & \ar{l} \cdots     \\  
     \end{tikzcd}
\end{equation*}
with the property that 
the composition of successive arrows $ \partial_{n-1} \circ \partial_{n}$ is zero.
 A morphism of  complexes $f_\bullet: V_\bullet \to V'_\bullet$ is a commutative ladder diagram.  
 The category ${\mathcal V}$ of vector spaces is isomorphic to its opposite category ${\mathcal V}^{op}$ via the 
 duality  functor that  takes a vector space to its dual and a linear map to 
its transpose/adjoint \cite{MacL,Awodey}.  Duality takes the category ${\mathcal C}^{op}$  of cocomplexes to
the category of  complexes ${\mathcal C}$.  
A complex is called an {\it interval complex} if its dual  is an interval cocomplex, and    
Proposition \ref{cocomp} becomes:  

\begin{proposition}
\label{comp} The category  ${\mathcal C}$ of  complexes  is linear and Abelian, and therefore Krull-Schmidt. 
A  complex is indecomposable iff it is isomorphic to an interval complex. 
\end{proposition}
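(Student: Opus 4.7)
The plan is to deduce the result for complexes from the already-established Proposition \ref{cocomp} for cocomplexes, using the duality functor on $\mathcal{V}$ as the transport mechanism. First I would make precise the statement that the vector-space duality functor $V \mapsto V^{\ast}$, $f \mapsto f^{\ast}$, extends termwise to a functor $\mathcal{C}^{op} \to \mathcal{C}^{\mathrm{opp}}$ (here I use $\mathcal{C}^{\mathrm{opp}}$ for the categorical opposite, reserving $\mathcal{C}^{op}$ for the category of cocomplexes as in the paper). Because successive arrows are reversed by dualization, a cocomplex diagram maps to a chain-complex diagram, and the condition $\partial^{p+1} \circ \partial^{p} = 0$ dualizes to $\partial_{n-1} \circ \partial_{n} = 0$; ladder morphisms between cocomplexes likewise dualize to ladder morphisms between complexes. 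Since finite-dimensional vector-space duality is an involutive equivalence of categories (double dual naturally isomorphic to the identity), this construction is an equivalence of categories between $\mathcal{C}^{op}$ and $\mathcal{C}^{\mathrm{opp}}$.

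Next I would transport the additive and Abelian structure. Linearity on hom-spaces is preserved because the transpose map on linear maps is linear, and finite direct sums, zero objects, kernels, and cokernels in $\mathcal{V}$ are all preserved (up to natural isomorphism) by duality. Consequently $\mathcal{C}$ inherits the property of being a linear Abelian category from $\mathcal{C}^{op}$. Atiyah's Criterion (Theorem \ref{Atiyah}) then immediately gives that $\mathcal{C}$ is Krull-Schmidt.

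For the classification of indecomposables, I would use the fact that an equivalence of categories (and in particular the equivalence $\mathcal{C}^{op} \simeq \mathcal{C}^{\mathrm{opp}}$ composed with the tautological equivalence of a category and its opposite at the level of isomorphism classes of objects) preserves decomposability and isomorphism classes. Thus a complex $V_\bullet$ is indecomposable in $\mathcal{C}$ iff its dual cocomplex $V^{\ast}_\bullet$ is indecomposable in $\mathcal{C}^{op}$, which by Proposition \ref{cocomp} happens iff $V^{\ast}_\bullet$ is isomorphic to an interval cocomplex. By the very definition of interval complex given just before the statement, this is equivalent to $V_\bullet$ being isomorphic to an interval complex.

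The only step that requires any care is verifying that dualization really carries the diagram shape of a cocomplex to that of a chain complex while preserving the differential condition and ladder morphisms; everything else is formal transport of structure across an equivalence. In particular, no new analytic or combinatorial content is needed beyond what is already encoded in Proposition \ref{cocomp} and the self-duality of $\mathcal{V}$, so the proof reduces to bookkeeping about indices and arrow directions.
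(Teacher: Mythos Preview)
Your proposal is correct and follows essentially the same route as the paper: the paper simply notes that the duality functor on ${\mathcal V}$ identifies ${\mathcal C}^{op}$ with ${\mathcal C}$ (as opposite categories), defines interval complexes as duals of interval cocomplexes, and declares that Proposition~\ref{cocomp} thereby becomes Proposition~\ref{comp}. Your write-up is a more explicit unpacking of that same duality argument, with the transport of the linear/Abelian structure and the preservation of indecomposables spelled out in detail.
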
 

\noindent The interval complexes are easily classified.      
An interval complex  $I_\bullet$ is associated to an interval $I \subseteq {\mathbb Z}$ as follows:   
$n \in I$ whenever $I_n = {\mathbb F}$,  and $n \notin I$ whenever $I_n = 0$.      
Since adjacent nonzero arrows in a complex cannot be iso(morphisms), 
 the interval complexes are in bijective correspondence with the intervals $I \subseteq {\mathbb Z}$ of  
cardinality at most two.    
We will often omit the bullet postscript when the context allows.  
We denote by $J[n] = \{ n \} \subseteq {\mathbb Z}$  the intervals of cardinality one.  
 For example, the  complex  $J[1] =J[1]_\bullet$ is the diagram of vector spaces
\begin{equation*}
\begin{tikzcd}[row sep=tiny, column sep=small] 
      \cdots & \ar{l}   0 &  {\mathbb F}   \ar{l}     
      & \ar{l}   0 \ar{l} & \ar{l}   0 \ar{l}& \ar{l} \cdots    \\  
      & n = 0 & n = 1 & n = 2 & n = 3\\
     \end{tikzcd}
     .
\end{equation*}
The indecomposable complex $J[n]$ is simple.
We denote by 
$K[n] =[n,n+1] \subseteq {\mathbb Z}$  the intervals of cardinality two.  
For example, 
the  complex $K[1]=K[1]_\bullet$ is the diagram of vector spaces
%
\begin{equation*}
\begin{tikzcd}[row sep=tiny, column sep=small] 
      \cdots & \ar{l}   0 &    {\mathbb F} \ar{l} 
      &  {\mathbb F} \ar{l}{1}    & \ar{l}   0 \ar{l}& \ar{l} \cdots     \\  
      & n = 0 & n = 1 & n = 2 & n = 3\\
     \end{tikzcd}
\end{equation*}
The indecomposable complex $K[n]$  has exactly one  nonzero proper subobject \hfil\break
$J[n] \hookrightarrow K[n]$.   For example, the inclusion 
 of complexes $J[1] \hookrightarrow K[1]$ is the commutative ladder diagram 
%
\begin{equation*}
\begin{tikzcd}[row sep=normal, column sep=small] 
      \cdots & \ar{l}   0  \ar{d} &    {\mathbb F} \ar{l} 
      \ar{d}{1} &  0 \ar{l}   \ar{d}& \ar{l}   0\ar{d} & \ar{l} \cdots     \\  
      \cdots & \ar{l}   0 &    {\mathbb F} \ar{l} 
      &  {\mathbb F} \ar{l}{1}    & \ar{l}   0 \ar{l}& \ar{l} \cdots     \\  
      & n = 0 & n = 1 & n = 3 & n = 4\\
     \end{tikzcd}
\end{equation*}

We now return to the  the Categorical Structural Theorem  \ref{classFilt}.  
 A filtered complex 
  is a diagram in the category ${\mathcal C}$ of complexes  
\begin{equation*}
\begin{tikzcd}[row sep=tiny, column sep=normal] 
      \cdots \ar[hook]{r}  & \tensor[_{p-1}]{V}{_\bullet} \ar[hook]{r} &  \tensor[_{p}]{V}{_\bullet} 
      \ar[hook]{r} & \tensor[_{p+1}]{V}{_\bullet} \ar[hook]{r} & \cdots   .\\  
     \end{tikzcd}
\end{equation*}
We will say a filtered complex is {\it basic} if its colimit $V_\bullet$ is isomorphic to an interval complex.  
The first statement of Proposition \ref{comp} tells us that the category   ${\mathcal C}$ of complexes is linear and Abelian.  
Then  Lemma \ref{crass} tells us that the category of filtered complexes is Krull-Schmidt.  
The second statement of  Proposition \ref{comp} classifies the indecomposable filtered complexes, 
completing the proof of:  

\setcounter{repsection}{1} 
\setcounter{repeat}{5}
\begin{reptheorem}
(Categorical  Structural Theorem)
The category of filtered complexes  is Krull-Schmidt. 
A filtered complex  is indecomposable iff it is basic. 
\end{reptheorem}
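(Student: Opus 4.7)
My plan is to obtain both halves of the theorem as a direct application of Lemma \ref{crass} with the ambient category ${\mathcal X}$ taken to be the category ${\mathcal C}$ of complexes, which by Proposition \ref{comp} is already known to be linear and Abelian (hence Krull-Schmidt in its own right by Atiyah's criterion). The strategy is that Lemma \ref{crass} transforms a Krull-Schmidt property one level up into a Krull-Schmidt property for the filtered category above it, while simultaneously reducing the classification of indecomposables to the classification in the coefficient category.

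First I would verify the hypotheses of Lemma \ref{crass}. Proposition \ref{comp} supplies the linear Abelian structure on ${\mathcal C}$. One also needs ${\mathcal C}$ to be concrete in the sense of Section \ref{subsection2.2}; this is routine because a complex can be viewed as the underlying set of the graded vector space $\bigoplus_n V_n$ equipped with its differential, and a chain map is a set map respecting these structures. Under this identification, the filtered objects in ${\mathcal C}$ are precisely the diagrams of complexes connected by inclusion chain maps and bounded below, i.e. exactly what the excerpt calls filtered complexes.

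Next I would apply Lemma \ref{crass} directly. Its first assertion immediately yields that the category of filtered complexes is Krull-Schmidt, giving the first half of the theorem. Its second assertion says that a filtered object $\tensor[_\bullet]{V}{_\bullet}$ is indecomposable iff its colimit $V_\bullet$ is indecomposable in ${\mathcal C}$. Combined with the second half of Proposition \ref{comp}, which identifies the indecomposables of ${\mathcal C}$ as interval complexes, and with the definition that a filtered complex is \emph{basic} exactly when its colimit is an interval complex, the desired classification of indecomposable filtered complexes follows at once.

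The one place I expect to need care is the bookkeeping compatibility between the colimit of a filtered complex taken inside ${\mathcal C}$ (as demanded by Lemma \ref{crass}) and the intuitive degree-wise union of the $\tensor[_{p}]{V}{_\bullet}$. This should follow because colimits in ${\mathcal C}$ are computed pointwise from the corresponding colimits in ${\mathcal V}$, and along each degree the arrows $\tensor[_{p}]{V}{_n} \hookrightarrow \tensor[_{p+1}]{V}{_n}$ are inclusions, so the colimit is realized concretely as the union $V_n = \bigcup_p \tensor[_{p}]{V}{_n}$ endowed with the common differential. Once this identification is in hand, no further work is needed and the two cited results close the argument.
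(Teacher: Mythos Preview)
Your proposal is correct and follows essentially the same route as the paper: invoke Proposition~\ref{comp} to know that ${\mathcal C}$ is linear Abelian, then apply Lemma~\ref{crass} with ${\mathcal X}={\mathcal C}$ to obtain both the Krull--Schmidt property and the characterization of indecomposables via colimits, and finish with the definition of \emph{basic}. The extra care you take in verifying concreteness of ${\mathcal C}$ and the pointwise computation of the colimit is reasonable bookkeeping that the paper leaves implicit.
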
 

\noindent  The basic filtered complexes are easily classified 
since we  know all  proper subobjects of interval complexes, 
namely  $0 \hookrightarrow J[n]$,  
  $0 \hookrightarrow K[n]$, and $J[n] \hookrightarrow K[n]$.   
Details and examples of basic filtered complexes appear in Chapter 4.  

\section{Categorical Frameworks for Persistent Homology} 
\label{section3}

\subsection{Standard Framework using Persistence Vector Spaces}  
\label{subsection3.1}
The structural theorem for  the category of tempered persistence vector spaces,  Theorem \ref{quiver},   is the foundation for the standard 
framework for  persistent homology. 

For each integer $n$, the {\it homology}  
 of degree $n$ is a functor $H_n: {\mathcal C} \to {\mathcal V}$ from the category ${\mathcal C}$ of complexes to the category ${\mathcal V}$ of 
vector spaces.  An object  $C$ in ${\mathcal C}$ is a diagram of vector spaces   
%
\begin{equation*}
\begin{tikzcd}[row sep=tiny, column sep=normal] 
      \cdots  & \ar{l} V_{n-1} &   \ar{l}{\partial_n}   V_n  
     &  \ar{l}{\partial_{n+1}}   V_{n+1} \ar{l} & \ar{l} \cdots    \\  
     \end{tikzcd}
\end{equation*}
where $\partial_n \circ \partial_{n+1}  = 0$.  Then $\image \partial_{n+1}  \hookrightarrow \ker \partial_n$ is a 
 subobject inclusion  of vector spaces, 
 and  the homology is the quotient vector space (cokernel)      
\begin{equation*}
 H_n(C) = {\ker \partial_n} / {\image \partial_{n+1}}. 
 \end{equation*}
More generally, the  homology functor $H_n$ takes a diagram in  ${\mathcal C}$ to a 
diagram in ${\mathcal V}$.

Denote by ${\mathcal F}$ the category of filtered complexes. 
An object $F$ in ${\mathcal F}$ is a  diagram of complexes 
\begin{equation*}
\begin{tikzcd}[row sep=tiny, column sep=normal] 
      \cdots \ar[hook]{r}  & \tensor[_{p-1}]{V}{_\bullet} \ar[hook]{r} &  \tensor[_{p}]{V}{_\bullet} 
      \ar[hook]{r} & \tensor[_{p+1}]{V}{_\bullet} \ar[hook]{r} & \cdots   ,  \\  
     \end{tikzcd}
\end{equation*}
\noindent which is tempered and bounded below, and which has  monic arrows.  
Denote by ${\mathcal P}$ the category of tempered persistence vector spaces. 
The  homology functor $H_n$ takes  
the diagram $F$ to the diagram of vector spaces 
\begin{equation*}
\begin{tikzcd}[row sep=tiny, column sep=normal] 
      \cdots \ar{r}  & H_n(\prescript{}{p-1} V_\bullet) \ar{r} &  H_n(\prescript{}{p} V_\bullet )
      \ar{r} & H_n(\prescript{}{p+1} V_\bullet) \ar{r} & \cdots ,  \\  
     \end{tikzcd}
\end{equation*}
\noindent  which is  tempered and bounded below, but which need not have monic arrows in general.  
So an object $F$ in ${\mathcal F}$  goes to an  object $P_n(F)$  in ${\mathcal P}$.     
Similarly a morphism in ${\mathcal F}$, which is a commutative ladder diagram of complexes, 
goes to a morphism in ${\mathcal P}$, which is a commutative ladder diagram of vector spaces.   
The resulting functor $P_n:{\mathcal F} \to {\mathcal P}$ is  the {\it persistent homology} of degree $n$.   

The standard framework for studying the   
 persistent homology functors $P_n:{\mathcal F} \to {\mathcal P}$ is  based on the structural theorem for the category ${\mathcal P}$, 
 Theorem \ref{quiver}.  
It suffices to work with an appropriate Krull-Schmidt subcategory 
of the Krull-Schmidt category  ${\mathcal P}$. 
  A filtered complex $F$ is studied 
by decomposing the persistence vector space $P_n(F)$ as a sum of  indecomposables.  
Since the diagram $P_n(F)$ is bounded below, all of its indecomposables are bounded below.  
The persistence vector spaces that are bounded below comprise  a full Abelian subcategory of ${\mathcal P}$, 
which we will denote by $\image P_n$.  
Despite the notation, the category $\image P_n$ does not depend on $n$;  it is always the same subcategory of ${\mathcal P}$.
The isomorphism class of an indecomposable in the Krull-Schmidt category ${\image P_n}$ is  described by the familiar barcode.     
An interval $I \subseteq {\mathbb Z}$ will be called a  {\it barcode} if  it is bounded below.  
A {\it barcode persistence vector space} is a persistence vector space $\prescript{}{\bullet} I$ corresponding to a barcode  
 $I \subseteq {\mathbb Z}$.   
\begin{theorem}
\label{standard}  The persistent homology functor $P_n:{\mathcal F} \to {\mathcal P}$ factors as 
\begin{equation*}
{\mathcal F} \to {\image P_n} \to {\mathcal P}. 
\end{equation*}
The category $\image P_n$ is Krull-Schmidt.  
An object in $\image P_n$ is indecomposable iff it is isomorphic to a barcode persistence vector space.  
 \end{theorem}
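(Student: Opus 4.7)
The plan is to deduce Theorem \ref{standard} from the results already established, essentially by restricting Theorem \ref{quiver} to the full subcategory $\image P_n$ of $\mathcal{P}$ consisting of persistence vector spaces bounded below.

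First I would verify the factorization. An object $F \in \mathcal{F}$ is bounded below as a filtered object, so there exists $j$ with $\prescript{}{p} V_\bullet = 0$ for all $p \le j$. Applying the homology functor $H_n$ degreewise yields $H_n(\prescript{}{p} V_\bullet) = H_n(0) = 0$ for such $p$, so $P_n(F)$ is a tempered persistence vector space bounded below, that is, an object of $\image P_n$. Morphisms in $\mathcal{F}$ are ladder diagrams of complexes, which map under $H_n$ to ladder diagrams of vector spaces, giving a morphism in $\image P_n$. This establishes the factorization ${\mathcal F} \to \image P_n \to {\mathcal P}$.

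Next I would check that $\image P_n$ is a strictly full linear Abelian subcategory of $\mathcal{P}$. Fullness is by definition: it is the full subcategory spanned by the bounded-below tempered persistence vector spaces. Closure under direct sums, kernels, and cokernels computed pointwise in $\mathcal{P}$ follows immediately from the fact that each of these constructions returns $0$ at any index where all inputs vanish. Hence $\image P_n$ is a linear Abelian category in its own right. Atiyah's Criterion (Theorem \ref{Atiyah}) then gives that $\image P_n$ is Krull-Schmidt; equivalently, since $\image P_n$ is closed under direct summands inside the Krull-Schmidt category $\mathcal{P}$ (a summand of a bounded-below object is bounded below), the Krull-Schmidt property descends automatically.

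Finally I would classify the indecomposables. A barcode persistence vector space is a bounded-below interval persistence vector space, which is indecomposable in $\mathcal{P}$ by Theorem \ref{quiver}; fullness of $\image P_n \hookrightarrow \mathcal{P}$ preserves endomorphism rings, so it remains indecomposable in $\image P_n$. Conversely, given an indecomposable $V \in \image P_n$, Theorem \ref{quiver} decomposes $V$ in $\mathcal{P}$ as a finite direct sum of interval persistence vector spaces $\prescript{}{\bullet} I_1 \oplus \cdots \oplus \prescript{}{\bullet} I_m$. Each summand is a subobject of $V$, hence bounded below, so each $I_k$ is a barcode and each summand lies in $\image P_n$. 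The decomposition therefore takes place inside $\image P_n$, and indecomposability forces $m = 1$, so $V$ is isomorphic to a barcode persistence vector space.

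The argument is essentially a routine restriction of prior results, so there is no serious obstacle; the only point requiring care is the verification that $\image P_n$ is closed under the relevant categorical constructions and under direct summands, which is what allows both the Krull-Schmidt property and the classification of indecomposables to transfer cleanly from $\mathcal{P}$.
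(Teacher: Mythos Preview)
Your proposal is correct and follows essentially the same route as the paper: the paper presents Theorem \ref{standard} as an immediate consequence of the surrounding discussion, namely that $\image P_n$ is the full Abelian subcategory of bounded-below tempered persistence vector spaces (hence Krull-Schmidt by Atiyah's Criterion), and the classification of indecomposables is inherited from Theorem \ref{quiver} since summands of a bounded-below object are bounded below. Your write-up simply makes these steps explicit.
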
  
We can now express the standard framework for persistent homology in terms of the functor   
 ${\mathcal F} \to {\image P_n}$ which takes a filtered complex to a persistence vector space in $\image P_n$.    
The Krull-Schmidt property of $\image P_n$ then allows decomposition as a sum of indecomposables. 
Each indecomposable in $\image P_n$ is a barcode persistence vector space, which is specified up to isomorphism by 
its barcode $I \subseteq {\mathbb Z}$.   An object in $\image P_n$ is determined up to isomorphism by its set of barcodes.

\subsection{Alternate Framework using Quotient Categories} 
\label{subsection3.2}
The structural theorem for  the category of filtered complexes,  Theorem \ref{classFilt},   is the foundation for  our alternate  
framework for  persistent homology.    

We will work with an appropriate Krull-Schmidt quotient category 
of the Krull-Schmidt category ${\mathcal F}$.   
Recall in general \cite{MacL} that an object of a quotient category of ${\mathcal F}$ is an object of ${\mathcal F}$, 
and a morphism is an equivalence class of morphisms of ${\mathcal F}$.  
Our quotient category ${\coimage P_n}$ is defined via the following equivalence relation (congruence) on morphisms:  
two morphisms $f$ and $f'$ in ${\mathcal F}$ are equivalent iff the morphisms $P_n(f)$ and $P_n(f')$ in ${\mathcal P}$ are equal.   
 Note that the category $\coimage P_n$ now does depend on the integer $n$; each $\coimage P_n$ is a different 
 quotient category of ${\mathcal F}$.

 \begin{example} 
 We return to the filtered simplicial complex of  Example \ref{explain}, as shown in Figure \ref{fig:sc1rep}. 
%
%
\begin{figure*}
  \includegraphics[width=\textwidth]{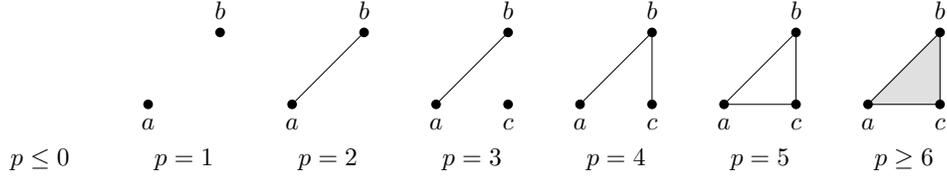}
\caption{A filtered simplicial complex.}
\label{fig:sc1rep}       
\end{figure*}

We first consider  the subobject shown in Figure \ref{fig:sc2}. 
%
%
\begin{figure*}
  \includegraphics[width=\textwidth]{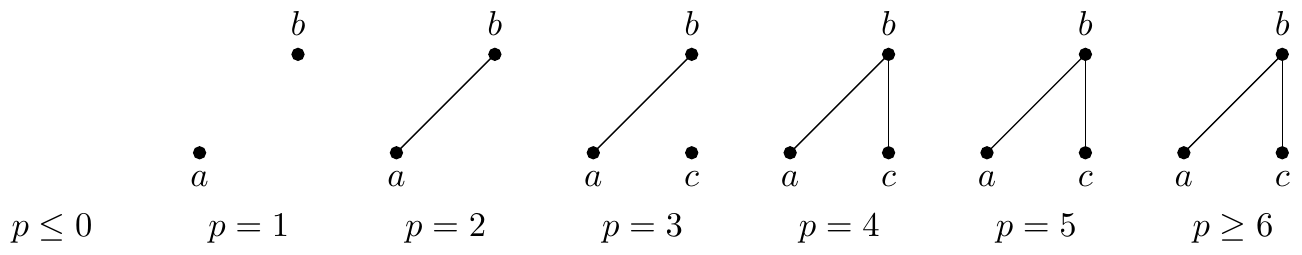}
\caption{A subobject of the filtered simplicial complex.}
\label{fig:sc2}       
\end{figure*}
In ${\mathcal F}$, this is a proper nonzero subobject.   
  In the quotient category  ${\coimage P_0}$,  the subobject inclusion becomes an isomorphism between 
 nonzero objects.    
 In the quotient category ${\coimage P_1}$, this becomes  a proper zero subobject.

Now consider another subobject as shown in Figure \ref{fig:sc3}.         
%
%
%
\begin{figure*}
  \includegraphics[width=\textwidth]{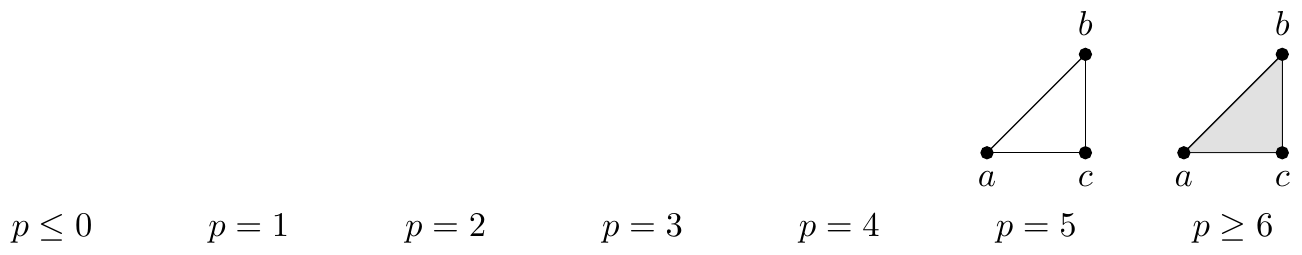}
\caption{Another subobject of the filtered simplicial complex.}
\label{fig:sc3}       
\end{figure*}
In ${\mathcal F}$, this is  a proper nonzero subobject.   
In the quotient category ${\coimage P_0}$, this remains  a proper nonzero subobject.  
In the quotient category  ${\coimage P_1}$,  the inclusion morphism becomes an isomorphism between 
 nonzero objects. 
 \end{example}

We recall that a quotient of a Krull-Schmidt category  is Krull-Schmidt in general.  This is because an 
 indecomposable  in ${\mathcal F}$ becomes either  a zero object or an indecomposable 
 with a local endomorphism ring in the quotient category (see e.g. \cite{Liu} p. 431).    
 The classification of indecomposables in the quotient category ${\coimage P_n}$ is now easily 
 obtained from Theorem \ref{classFilt}.  This  is independent of the well-known classification of 
 indecomposables in the category of persistence vector spaces (Theorem \ref{quiver}).  
Using the  classification of indecomposables in each of the Krull-Schmidt categories ${\coimage P_n}$ and ${\image P_n}$, 
it is now easy to  
verify that the functor ${\coimage P_n} \to {\image P_n}$ is full, faithful, and essentially surjective. 
Recalling \cite{MacL,Awodey} that a functor satisfying these conditions is an equivalence of categories, we have:  
 \begin{theorem}
 \label{alternate}  The persistent homology functor $P_n:{\mathcal F} \to {\mathcal P}$ factors as 
 \begin{equation*} {\mathcal F} \to {\coimage P_n}\to {\image P_n} \to {\mathcal P}, 
\end{equation*}
where the functor ${\coimage P_n}\to {\image P_n}$ is an equivalence of categories.   
 \end{theorem}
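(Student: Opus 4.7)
The plan is to verify that $P_n$ descends to the quotient and that the resulting functor $\Phi: {\coimage P_n} \to {\image P_n}$ is full, faithful, and essentially surjective. The factorization itself is automatic from the very definition of the congruence: two morphisms in ${\mathcal F}$ are declared equivalent precisely when they agree under $P_n$, so $P_n$ factors uniquely through ${\mathcal F} \to {\coimage P_n}$; moreover $P_n(F)$ is bounded below whenever $F$ is, so the induced functor lands in ${\image P_n}$. Faithfulness of $\Phi$ is then tautological: two classes of morphisms are identified by $\Phi$ iff their representatives agree under $P_n$, which by the definition of the quotient means the classes were already equal.

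For essential surjectivity, Theorem \ref{standard} shows every object of ${\image P_n}$ is (up to isomorphism) a finite direct sum of barcode persistence vector spaces $\prescript{}{\bullet} I$. Since $P_n$ preserves direct sums, it suffices to realize each $\prescript{}{\bullet} I$ as $P_n(F)$ for some filtered complex $F$. I would use the classification of basic filtered complexes at the end of Section 2.3, which comes directly from the list of proper subobjects of interval complexes: for a barcode $I = [p, \infty)$, take the basic filtered complex whose colimit is $J[n]$, with the filtration jumping from $0$ to $J[n]$ at level $p$; for $I = [p, q)$ with $q < \infty$, take the basic filtered complex with colimit $K[n]$, with the filtration jumping from $0$ to $J[n]$ at level $p$ and from $J[n]$ to $K[n]$ at level $q$. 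A direct computation of $H_n$ recovers $\prescript{}{\bullet} I$. Basic filtered complexes whose colimit is $K[n]$ with filtration jumping directly from $0$ to $K[n]$ have trivial $n$-th homology, and so are identified with the zero object in ${\coimage P_n}$ (since their identity morphism is sent to zero).

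For fullness, the plan is to reduce to morphisms between indecomposables using Krull-Schmidt. Both ${\coimage P_n}$ (as a quotient of the Krull-Schmidt category ${\mathcal F}$ from Theorem \ref{classFilt}) and ${\image P_n}$ (by Theorem \ref{standard}) are Krull-Schmidt, and $\Phi$ preserves direct sums. So a morphism $g: P_n(F) \to P_n(F')$ decomposes as a matrix of morphisms between barcode persistence vector spaces, and it suffices to lift each matrix entry; equivalently, to verify surjectivity of the map $\mathrm{Hom}_{{\coimage P_n}}(F_i, F'_j) \to \mathrm{Hom}_{{\image P_n}}(P_n(F_i), P_n(F'_j))$ for basic filtered complexes $F_i, F'_j$ with nonzero image. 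The main obstacle lies here: one must check all combinations of types of basic filtered complexes. A clean route is to observe that $\mathrm{Hom}$ between two barcode persistence vector spaces $\prescript{}{\bullet} I \to \prescript{}{\bullet} I'$ is at most one-dimensional, and nonzero precisely when the intervals are compatibly nested; in each such nontrivial case one exhibits an explicit lift coming from a scalar chain map between interval complexes that respects the two filtrations. Alternatively, one can finesse the case analysis by directly computing $\dim \mathrm{Hom}_{{\coimage P_n}}(F_i, F'_j)$ using the definition of the congruence and comparing to the barcode $\mathrm{Hom}$; combined with the already-established injectivity from faithfulness, equality of dimensions forces surjectivity.
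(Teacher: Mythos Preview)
Your proposal is correct and follows essentially the same approach as the paper: verify that the induced functor ${\coimage P_n}\to{\image P_n}$ is full, faithful, and essentially surjective by exploiting the classification of indecomposables on both sides (Theorem~\ref{classFilt} for ${\coimage P_n}$ via the quotient-of-Krull-Schmidt argument, Theorem~\ref{standard} for ${\image P_n}$). The paper leaves the verification as ``easy'' once the two classifications are in hand, whereas you spell out the mechanics---in particular the reduction of fullness to a Hom computation between basics and the dimension-count finesse---so your write-up is a more detailed execution of the same strategy rather than a different one.
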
  
\noindent   The isomorphism class of an indecomposable object $\prescript{}{\bullet} X$ in the Krull-Schmidt category ${\coimage P_n}$ can be specified  
as $I_n$.   Here the integer $n$ is  the degree/dimension label of the category ${\coimage P_n}$.    
The interval subset $I \subseteq {\mathbb Z}$ is defined by the rule:  
$p \in I$ iff the complex $\prescript{}{p} X$ at level $p$ is isomorphic to $J[n]$.  
We note that the classification of indecomposables in ${\coimage P_n}$ does {\it not} reference homology.  
This point will be illustrated in detail in Example 4.4 below.  
We remark that our naming choices for   
 ${\coimage P_n}$ and  ${\image P_n}$ are intended to emphasize the parallel between Theorem \ref{alternate} and 
 the factorization of a morphism in an Abelian category, see Section \ref{subsection5.2}.    
 Section 5.1 reviews an analogous functor factorization in the simpler setting of plain (not persistent) homology.

We can now express the alternate framework for persistent homology in terms of the functor  
 ${\mathcal F} \to {\coimage P_n}$ which takes a filtered complex in ${\mathcal F}$ to the same filtered complex 
 viewed as an object in  $\coimage P_n$.  
The  Krull-Schmidt property of  $\coimage P_n$ then allows decomposition as a sum of   indecomposables   in ${\coimage P_n}$.    
Each indecomposable  is specified up to isomorphism by  $I_n$.  An object in $\coimage P_n$ is determined up to 
isomorphism by the collection of  intervals $I_n \subseteq {\Bbb Z}$ indexing its decomposition.       
This  framework obviates the need for  auxiliary objects such as persistence vector spaces, while 
providing exactly the same information about  filtered complexes as the standard framework.    
These two frameworks are further compared in Section \ref{subsection5.4}.

 \section{Proving Structural Equivalence}
 \label{section4}
 
 \subsection{Forward Structural Equivalence}   
 \label{4.1}
 
 We now consider in more detail matrix representations of a filtered complex and its automorphisms.    
 The first step is to associate to a filtered complex a  finite-dimensional vector space with an 
 appropriately adapted basis.   
A filtered  complex  is a  diagram of complexes indexed by the integer  level $p$,  
displayed below together with its  colimit:  
\begin{equation*}
\begin{tikzcd}[row sep=tiny, column sep=tiny] 
   \cdots \ar[hook]{r}  & \tensor[_{-1}]{V}{_\bullet}  \ar[hook]{r} & \tensor[_0]{V}{_\bullet}    
      \ar[hook]{r} & \tensor[_1]{V}{_\bullet}  \ar[hook]{r} &  \tensor[_2]{V}{_\bullet} \ar[hook]{r}& \tensor[_3]{V}{_\bullet} 
       \ar[hook]{r} & \cdots & & V_\bullet  \\  
 & \phantom{p = -1}   &  \phantom{p = 0} &  \phantom{p = 1} &  \phantom{p = 2} &  \phantom{p = 3} &  & & {\rm colim}\\ 
     \end{tikzcd}
\end{equation*}
\noindent
A filtered complex becomes a ``lattice" diagram of finite-dimensional vector spaces:  
\begin{equation*}
\begin{tikzcd}[row sep=small, column sep=tiny]
  &  \vdots \ar{d} &  \vdots \ar{d}&  \vdots \ar{d}&  \vdots \ar{d} &   \vdots \ar{d} & & & \vdots \ar{d}\\ 
       \cdots  \ar[hook]{r}  & \tensor[_{-1}]{V}{_2}   \ar{d} \ar[hook]{r} & \tensor[_0]{V}{_2}   \ar{d} \ar[hook]{r} &\tensor[_1]{V}{_2}  \ar{d} \ar[hook]{r} &  \tensor[_2]{V}{_2}  \ar{d} \ar[hook]{r} & \tensor[_3]{V}{_2}  \ar{d} \ar[hook]{r} & \cdots 
      & & V_2 \ar{d}{\partial_2}\\  
      \cdots  \ar[hook]{r}  & \tensor[_{-1}]{V}{_1}   \ar{d} \ar[hook]{r} & \tensor[_0]{V}{_1}   \ar{d} \ar[hook]{r} &\tensor[_1]{V}{_1}  \ar{d} \ar[hook]{r} &  \tensor[_2]{V}{_1}  \ar{d} \ar[hook]{r} & \tensor[_3]{V}{_1}  \ar{d} \ar[hook]{r} & \cdots 
      & &V_1 \ar{d}{\partial_1} \\  
      \cdots  \ar[hook]{r}  & \tensor[_{-1}]{V}{_0}   \ar{d} \ar[hook]{r} & \tensor[_0]{V}{_0}   \ar{d} \ar[hook]{r} &\tensor[_1]{V}{_0}  \ar{d} \ar[hook]{r} &  \tensor[_2]{V}{_0}  \ar{d} \ar[hook]{r} & \tensor[_3]{V}{_0}  \ar{d} \ar[hook]{r} & \cdots 
      & &V_0 \ar{d}{\partial_0}\\  
        \cdots  \ar[hook]{r}  & \tensor[_{-1}]{V}{_{-1}}   \ar{d} \ar[hook]{r} & \tensor[_0]{V}{_{-1}}   \ar{d} \ar[hook]{r} &\tensor[_1]{V}{_{-1}}  \ar{d} \ar[hook]{r} &  \tensor[_2]{V}{_{-1}}  \ar{d} \ar[hook]{r} & \tensor[_3]{V}{_{-1}}  \ar{d} \ar[hook]{r} & \cdots 
      & &V_{-1} \ar{d}\\      
  &  \vdots &  \vdots&  \vdots &  \vdots &  \vdots & & & \vdots \\ 
&  \phantom{p = -1}   &  \phantom{p = 0} &  \phantom{p = 1} &  \phantom{p = 2} &  \phantom{p = 3} &  & & {\rm colim}\\ 
     \end{tikzcd}
\end{equation*}
\noindent In the colimit complex, the  composition $\partial_{n-1} \circ \partial_{n}: V_n \to V_{n-2}$   
is  zero  for all $n$.  Since the diagram is tempered,    $\partial_{n-1} \circ \partial_{n}$ is an isomorphism for all 
but finitely many $n$.    It follows that  the complex is bounded, 
meaning that the vector space $V_n$ is zero-dimensional for all but finitely many $n$.    
The direct sum   $V = \oplus_{n \in {\mathbb Z}} \,  V_n$ is then a finite-dimensional vector space associated to the filtered complex.       
A vector $v \in V$ is said to have {\it  pure degree}  iff $v \in V_n \subseteq V$ for some integer $n$.    
The integer $n$ is then called the {\it degree} of the pure degree vector $v$, and is encoded by a postscript $v_n$.   
The {\it  (filtration) level} of a degree $n$ vector $v_n \in V$ is the smallest integer $p$ such that 
$v_n \in \tensor[_p]{V}{_n} \subseteq V_n$.    
The  level of the degree $n$ vector $v_n$ is encoded by a prescript $\tensor[_p]{v}{_n}$.    

Gaussian elimination constructs an adapted basis for a filtered vector space.  Summing over degrees, we obtain   
an {\it adapted basis} of a filtered complex, meaning a basis of the vector space $V$ satisfying the three conditions:  
\begin{itemize}[leftmargin=*]
\item	Every basis element has pure degree.     
\item	For each $n$ and $p$, the vector space $\, \tensor[_p]{V}{_n}$ is spanned by 
the basis vectors with degree equal to $n$ and  level less than or equal to $p$. 
\item The basis elements are ordered so that degree is nondecreasing, and within each degree the 
 level is nondecreasing.   
\end{itemize}
\noindent
A block-diagonal  triangular matrix $B$  transforms an adapted basis to a new adapted basis, 
representing an automorphism of the filtered complex.   Here we assume that the block structure of the matrix is 
compatible with degrees of the basis elements.

The {\it colimit boundary}  $\partial  = \oplus_{n \in {\mathbb Z}} \,  \partial_n$ of a filtered complex is      
a linear endomorphism $\partial: V \to V$. 
The colimit boundary $\partial$ is represented by a matrix $D$ relative to an adapted basis.  
The matrix representative $D$ is  block-superdiagonal  because $\partial$ is homogeneous of degree $-1$, 
and $D^2 = 0$ because  $\partial^2 = 0$.  
If additionally the matrix representative is almost-Jordan, we will say the  
adapted basis is {\it special}.  
The Matrix Structural Theorem \ref{postnon}   yields:

\begin{proposition}
\label{forego}  A filtered complex admits a special adapted basis.  
\end{proposition}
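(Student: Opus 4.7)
The plan is direct: apply the Matrix Structural Theorem \ref{postnon} to the matrix representation of the colimit boundary with respect to any adapted basis, and verify that the resulting change-of-basis matrix converts adapted bases into adapted bases.

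First, I would fix an adapted basis of the filtered complex. Its existence was established in the paragraphs preceding the proposition via Gaussian elimination on each filtered piece $\tensor[_p]{V}{_n}$, followed by ordering so that degree is nondecreasing and, within each degree, level is nondecreasing. Relative to this basis, the colimit boundary $\partial = \oplus_{n} \partial_n$ is represented by a matrix $D$ that is block-superdiagonal (because $\partial$ is homogeneous of degree $-1$) and satisfies $D^2 = 0$ (because $\partial^2 = 0$). In other words, $D$ is a block-superdiagonal differential matrix in the sense of Section \ref{subsection1.2}.

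Next, I would invoke Theorem \ref{postnon} to factor $D = B \underline{D} B^{-1}$ with $\underline{D}$ a block-superdiagonal almost-Jordan differential matrix and $B$ a block-diagonal triangular matrix, the block structure of $B$ being compatible with the degree decomposition. Interpreting $B$ as a change-of-basis matrix, I would produce a new basis of $V = \oplus_{n} V_n$ and check that it is still adapted. Block-diagonality of $B$ guarantees that each new basis element is a combination of old basis elements of a single degree, so each new basis vector has pure degree. Triangularity of $B$ within each degree block guarantees that the $k$-th new basis vector in degree $n$ is a combination of the first $k$ old basis vectors in degree $n$; since the old basis was level-ordered within each degree, the filtration subspace $\tensor[_p]{V}{_n}$ is spanned by the new vectors of degree $n$ and level at most $p$, and the level ordering on the new basis remains nondecreasing within each degree.

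Relative to this new adapted basis, the colimit boundary is represented by $\underline{D} = B^{-1} D B$, which is almost-Jordan by construction, so the new basis is special. The only substantive verification is that a block-diagonal triangular change of basis genuinely preserves the adapted basis property, and this is the bookkeeping step just outlined rather than a real obstacle. In this sense, Proposition \ref{forego} is an essentially immediate translation of the Matrix Structural Theorem \ref{postnon} into the language of adapted bases for filtered complexes, with all the substantive work already carried out in the proof of Theorem \ref{postnon}.
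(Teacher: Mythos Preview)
Your proof is correct and follows essentially the same approach as the paper: choose an adapted basis, apply Theorem~\ref{postnon} to the resulting block-superdiagonal differential matrix $D$, and observe that the block-diagonal triangular matrix $B$ carries the adapted basis to a special adapted basis. Your explicit verification that block-diagonal triangular changes of basis preserve the adapted property is something the paper states just before the proposition rather than inside the proof, but the argument is identical.
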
 

\begin{proof} Choose an adapted basis.  Let $D$ be the block-superdiagonal differential matrix 
representing $\partial$ relative to the adapted basis.   Theorem \ref{postnon} provides 
a block-diagonal triangular matrix $B$ such that  ${\underline D} = B^{-1} D B$ is almost-Jordan.  
So the matrix $B$ transforms the original adapted basis to a special adapted basis.  
\qed \end{proof} 

\begin{corollary} 
\label{deco}
A filtered complex admits a finite decomposition as a sum of basic filtered complexes.  
\end{corollary}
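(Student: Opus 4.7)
The plan is to use Proposition \ref{forego} to equip the filtered complex with a special adapted basis, and then read off a decomposition into basic filtered complexes directly from the almost-Jordan block structure of the resulting matrix $\underline{D}$.

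First I would invoke Proposition \ref{forego} to obtain a special adapted basis. The colimit boundary is then represented by an almost-Jordan differential matrix $\underline{D}$. By definition of almost-Jordan, there is a permutation of the adapted basis bringing $\underline{D}$ into block-diagonal Jordan form, with each block being a copy of either $J$ or $K$. This permutation partitions the basis into singleton groups $\{\tensor[_p]{v}{_n}\}$ with $\partial v_n = 0$ (from $J$-blocks) and paired groups $\{\tensor[_q]{w}{_n}, \tensor[_p]{v}{_{n+1}}\}$ with $\partial v_{n+1} = w_n$ (from $K$-blocks), where $q \le p$ because the boundary cannot raise the filtration level.

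For each group $G_i$ I would define a filtered subcomplex $W^{(i)}$ of the original filtered complex: its colimit is the graded span of $G_i$, and its level-$p$ part is the intersection of this span with $\tensor[_p]{V}{_\bullet}$. The block-diagonal structure of $\underline{D}$ guarantees that the span of $G_i$ is closed under $\partial$, so $W^{(i)}$ is a genuine subcomplex; its colimit is isomorphic to the interval complex $J[n]$ in the singleton case or $K[n]$ in the paired case, so $W^{(i)}$ is basic in the sense of Section \ref{subsection2.3}.

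The concluding step is to verify that the filtered complex equals $\bigoplus_i W^{(i)}$ in the category ${\mathcal F}$. At each level $p$ and degree $n$ the vector space $\tensor[_p]{V}{_n}$ is, by the definition of an adapted basis, spanned by precisely those adapted basis elements of degree $n$ whose level is at most $p$, and sorting these vectors by the group to which they belong yields the required direct sum decomposition level by level. I expect the main obstacle to be the bookkeeping needed to confirm that the intrinsic degree and level of each adapted basis element are preserved by the permutation that block-diagonalizes $\underline{D}$, and that the induced filtration on each $W^{(i)}$ really matches that of a basic filtered complex (in the paired case, $J[n]$ appearing at level $q$ and being augmented to $K[n]$ at level $p$). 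Finiteness of the decomposition is automatic since the adapted basis is finite.
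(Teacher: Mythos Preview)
Your proposal is correct and follows essentially the same approach as the paper: invoke Proposition \ref{forego} to obtain a special adapted basis, permute the almost-Jordan matrix $\underline{D}$ to Jordan form, and read off the basic summands from the Jordan blocks. The paper's proof is terser than yours, but the substance is identical; your additional bookkeeping about levels, degrees, and the level-by-level direct sum is exactly the verification that the paper leaves implicit.
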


\begin{proof}  Choose a special adapted basis, and denote by ${\underline D}$ the 
corresponding almost-Jordan block-superdiagonal differential matrix representative.    
Let $P$ be a permutation matrix such that the matrix $P^{-1} {\underline D} P$ is Jordan.  
Each Jordan block of this matrix represents a basic subobject of the filtered complex.  
The decomposition into Jordan blocks represents the decomposition of the 
 filtered complex as a direct sum of basic filtered complexes.  
\qed \end{proof} 

To verify the Krull-Schmidt property, we will also need:  

\begin{lemma}   
\label{locring}
A basic filtered complex has   local endomorphism ring. 
\end{lemma}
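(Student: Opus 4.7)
The plan is to identify the endomorphism ring of a basic filtered complex with the endomorphism ring of its colimit, and then observe that the latter is just the field $\mathbb{F}$.

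First I would recall the structure of a basic filtered complex $\prescript{}{\bullet}V_\bullet$. Its colimit $V_\bullet$ is isomorphic to an interval complex, i.e.\ either $J[n]$ or $K[n]$ for some integer $n$. Since every arrow in the filtered diagram is an inclusion, each $\prescript{}{p}V_\bullet$ is a subobject of $V_\bullet$. By the classification already noted in Section 2.3, the only proper subobjects of $J[n]$ are $0$, and the only proper subobjects of $K[n]$ are $0$ and $J[n]$. So the filtration $\prescript{}{\bullet}V_\bullet$ takes only these very few values as $p$ varies.

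Next I would compute $\operatorname{End}(V_\bullet)$ directly in the category $\mathcal{C}$ of complexes. For $J[n]$, a chain endomorphism is a single linear map $\mathbb{F}\to\mathbb{F}$, giving $\operatorname{End}(J[n])\cong\mathbb{F}$. For $K[n]$, a chain endomorphism is a pair $(a,b)\colon\mathbb{F}\oplus\mathbb{F}\to\mathbb{F}\oplus\mathbb{F}$ (one scalar in each degree) satisfying the chain condition $a\circ 1=1\circ b$, forcing $a=b$, so again $\operatorname{End}(K[n])\cong\mathbb{F}$. In either case, the endomorphism ring of the colimit is a field, and in particular local.

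Then I would show that the endomorphism ring of the filtered complex coincides with this ring. A filtered endomorphism is by definition a family $\prescript{}{p}f_\bullet\colon\prescript{}{p}V_\bullet\to\prescript{}{p}V_\bullet$ compatible with the inclusion arrows; since filtered colimits in $\mathcal{C}$ are computed pointwise and each arrow is monic, such a family is the same data as an endomorphism $f_\bullet\colon V_\bullet\to V_\bullet$ of the colimit together with the requirement that $f_\bullet$ carry each subobject $\prescript{}{p}V_\bullet$ into itself. But every endomorphism of $V_\bullet$ is scalar multiplication by some $a\in\mathbb{F}$ in each degree, and such a map sends every subspace to itself, hence preserves every subcomplex in the filtration automatically. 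Thus the restriction map $\operatorname{End}(\prescript{}{\bullet}V_\bullet)\to\operatorname{End}(V_\bullet)$ is a ring isomorphism, and the endomorphism ring of the basic filtered complex is $\mathbb{F}$, which is local.

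The only step requiring genuine care is the identification of the filtered endomorphism ring with $\operatorname{End}(V_\bullet)$; this is really an observation that scalars preserve all subobjects, so it is not so much a hard step as one that must be stated explicitly. Everything else is a direct calculation using the classification of interval complexes and their subobjects recalled at the end of Section \ref{subsection2.3}.
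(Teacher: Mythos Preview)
Your proof is correct and follows essentially the same approach as the paper: identify the endomorphism ring of the basic filtered complex with that of its colimit interval complex (injectivity from the monics, surjectivity because endomorphisms preserve all subobjects), and observe that the latter ring is the field $\mathbb{F}$, hence local. The only cosmetic difference is that you argue surjectivity by noting that every endomorphism is a scalar and scalars preserve all subspaces, whereas the paper checks case-by-case that endomorphisms of $J[n]$ and $K[n]$ restrict to each subobject; the content is the same.
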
 

\begin{proof}  We first show that  the colimit complex of a basic filtered complex has local endomorphism ring.
The colimit complex is isomorphic to an interval complex.  
An interval complex is an indecomposable in the linear Abelian category of complexes, 
so it has local endomorphism ring by  Atiyah's Criterion \ref{Atiyah}.   
(Or less abstractly, it is easy to check that the endomorphism ring of an interval complex is isomorphic to the field ${\mathbb F}$.) 

The proof is completed by checking that  the endomorphism ring of a basic filtered complex 
maps isomorphically to the endomorphism ring of its colimit interval complex.   
In general, the endomorphism ring of a filtered object maps {\it injectively} 
to the endomorphism ring of its colimit.  
We need to show  that the endomorphism ring of a basic filtered complex maps 
{\it surjectively} to the endomorphism ring of its colimit.  It suffices to show   
that an endomorphism of an interval complex restricts to an endomorphism of any subobject.   
  There are two types of interval complexes to consider.   
If the interval complex is isomorphic to $J[n]$, then the 
 subobjects  are $0$ and $J[n]$, and any  endomorphism restricts. 
If the interval complex is isomorphic to $K[n]$, then the subobjects are 
 $0$, $J[n]$, and $K[n]$, and any  endmorphism restricts.    
\qed \end{proof} 

\noindent  
Assembling the pieces  proves the main result of this section:      

 \setcounter{repsection}{1} 
\setcounter{repeat}{6}
\begin{repproposition}
 (Forward Structural Equivalence) 
 The Matrix Structural Theorem implies the Categorical Structural Theorem. 
\end{repproposition}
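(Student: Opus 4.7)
The plan is to assemble the three preceding results---Proposition \ref{forego}, Corollary \ref{deco}, and Lemma \ref{locring}---directly into the two defining axioms of a Krull-Schmidt category given in Definition \ref{ksdefTwo}, together with the classification of indecomposables claimed in the Categorical Structural Theorem. Note that the Matrix Structural Theorem enters this chain only through Proposition \ref{forego}, whose proof exhibits a special adapted basis by conjugating an arbitrary block-superdiagonal differential matrix into almost-Jordan form via a block-diagonal triangular change of basis. Everything downstream of that is categorical bookkeeping.

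First I would verify axiom (1) of Definition \ref{ksdefTwo}. Corollary \ref{deco} furnishes, for an arbitrary filtered complex, a finite decomposition into basic summands. Lemma \ref{locring} endows each such basic summand with a local endomorphism ring, and Lemma \ref{locind} then upgrades ``local endomorphism ring'' to ``indecomposable''. Chaining these, every filtered complex admits a finite decomposition into indecomposable summands, as required.

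Next I would handle axiom (2) and the indecomposable classification in a single stroke. Let $X$ be any indecomposable filtered complex. Applying Corollary \ref{deco} yields $X \simeq X_1 \oplus \cdots \oplus X_m$ with each $X_i$ a nonzero basic filtered complex (basics are nonzero because their colimits are interval complexes). If $m \ge 2$, then splitting off $X_1$ from the remaining summands would write $X$ as a direct sum of two nonzero objects, contradicting indecomposability; hence $m = 1$ and $X \simeq X_1$ is itself basic. Lemma \ref{locring} then supplies the local endomorphism ring demanded by axiom (2), and the same argument simultaneously shows that a filtered complex is indecomposable if and only if it is basic (the reverse implication being immediate from Lemma \ref{locring} and Lemma \ref{locind}).

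The ``main obstacle'' has been outsourced to Proposition \ref{forego}: the entire nontrivial algebraic content---namely that arbitrary block-superdiagonal differential matrices admit a block-diagonal triangular conjugation into almost-Jordan form---is precisely the Matrix Structural Theorem being imported as a hypothesis. The remaining categorical assembly is routine, with the only mildly non-mechanical step being the maneuver in the preceding paragraph that promotes a decomposition into basics into the sharper identification of indecomposables with basics.
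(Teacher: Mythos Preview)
Your proof is correct and follows essentially the same approach as the paper. The paper's argument is organized slightly differently---it first establishes the biconditional ``indecomposable iff basic'' and then reads off both Krull-Schmidt axioms from that, whereas you check axiom (1) first and then fold axiom (2) and the classification together---but the logical content and the use of Proposition~\ref{forego}, Corollary~\ref{deco}, Lemma~\ref{locring}, and Lemma~\ref{locind} are identical.
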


\begin{proof}  We first prove that a filtered complex is indecomposable iff it is basic.  
A basic filtered complex has a local endomorphism ring by Lemma \ref{locring}, 
so it is indecomposable by 
Lemma \ref{locind}.  
An indecomposable filtered complex is a finite direct sum of basic filtered complexes by Corollary \ref{deco}.  
The direct sum cannot have more than one summand, because that would contradict the indecomposability.   
So an indecomposable filtered complex is  basic.  

Now it remains to check the two conditions of Definition \ref{ksdefTwo}.   
Since a basic filtered complex is indecomposable, Corollary \ref{deco} asserts that every filtered complex 
admits a finite decomposition as a sum of indecomposables.  
Since an indecomposable filtered complex is basic,  Lemma \ref{locring} asserts that every indecomposable 
has a local endomorphism ring.   
\qed \end{proof}

\begin{example} 
\label{nuex} Let $F$ be the filtered complex of Example \ref{exfilt}.  The initial 
adapted basis  consists of appropriately ordered simplices:  
$\tensor[_1]{a}{_0},   \tensor[_1]{b}{_0}, \tensor[_3]{c}{_0},   
 \tensor[_2]{ab}{_1},   \tensor[_4]{bc}{_1},  \tensor[_5]{ac}{_1},  \tensor[_6]{abc}{_2}$.   
The block-superdiagonal differential matrix $D$ represents the colimit boundary operator relative to the initial adapted basis.  

The triangular block-diagonal  matrix $B$ represents an automorphism of the filtered complex.   
This automorphism takes the initial adapted basis to the transformed adapted basis 
$  \tensor[_1]{\underline{a}}{_0}, \tensor[_1]{\underline{b}}{_0}, \tensor[_3]{\underline{c}}{_0}, 
\tensor[_2]{\underline{ab}}{_1}, \tensor[_4]{\underline{bc}}{_1}, \tensor[_5]{\underline{ac}}{_1}, \tensor[_6]{\underline{abc}}{_2}$. 
This transformed adapted basis is special, because 
 the  block-superdiagonal differential matrix  representative ${\underline D} = B^{-1} D B$ is  almost-Jordan:  

 
 \begin{equation*}
 {\underline D} =
 \begin{tikzpicture}[baseline={([yshift=-1.4ex]current bounding box.center)}]

[decoration=brace]

\tikzset{
    node style ge/.style={circle,minimum size=.75cm},
}
\pgfdeclarelayer{background}
\pgfdeclarelayer{foreground}
\pgfsetlayers{background,main,foreground}
\matrix (A) [matrix of math nodes,
             nodes = {node style ge},
             left delimiter  = {[}, 
             right delimiter = {]}, 
             inner sep=-2pt, 
             row sep=-.3cm, 
             column sep=0cm 
             ]
{
{\mathbf 0} & 0 & 0 & 0 & 0 & 0 & 0 \\
0 & {\mathbf 0} & 0 & {\mathbf 1} & 0 & 0 & 0 \\
0 & 0 & {\mathbf 0} & 0 & {\mathbf 1} & 0 & 0 \\
0 & 0 & 0 & 0 & 0 & 0 & 0 \\
0 & 0 & 0 & 0 & 0 & 0 & 0 \\
0 & 0 & 0 & 0 & 0 & {\mathbf 0} & {\mathbf 1} \\
0 & 0 & 0 & 0 & 0 & 0 & 0 \\
};
\node[above=-1pt] at (A-1-1.north) {$ \tensor[_1]{\underline{a}}{_0}$};
\node[above=-1pt] at (A-1-2.north) {$\tensor[_1]{\underline{b}}{_0} $};
\node[above=-1pt] at (A-1-3.north) {$ \tensor[_3]{\underline{c}}{_0}$};
\node[above=-1pt] at (A-1-4.north) {$\tensor[_2]{\underline{ab}}{_1} $};
\node[above=-1pt] at (A-1-5.north) {$\tensor[_4]{\underline{bc}}{_1} $};
\node[above=-1pt] at (A-1-6.north) {$\tensor[_5]{\underline{ac}}{_1} $};
\node[above=-1pt] at (A-1-7.north) {$\tensor[_6]{\underline{abc}}{_2} $};
\node[left=6pt,yshift=-.05cm] at (A-1-1.west) {$\tensor[_1]{\underline{a}}{_0} $};
\node[left=6pt,yshift=-.05cm] at (A-2-1.west) {$\tensor[_1]{\underline{b}}{_0}  $};
\node[left=6pt,yshift=-.05cm] at (A-3-1.west) {$ \tensor[_3]{\underline{c}}{_0}   $};
\node[left=6pt,yshift=-.05cm] at (A-4-1.west) {$\tensor[_2]{\underline{ab}}{_1} $};
\node[left=6pt,yshift=-.05cm] at (A-5-1.west) {$\tensor[_4]{\underline{bc}}{_1} $};
\node[left=6pt,yshift=-.05cm] at (A-6-1.west) {$\tensor[_5]{\underline{ac}}{_1} $};
\node[left=6pt,yshift=-.05cm] at (A-7-1.west) {$\tensor[_6]{\underline{abc}}{_2} $};

\node (top1) at ($(A-1-3.north)!0.5!(A-1-4.north)$) {};
\node (top2) at ($(A-1-6.north)!0.5!(A-1-7.north)$) {};

\node (bot1) at ($(A-7-3.south)!0.5!(A-7-4.south)$) {};
\node (bot2) at ($(A-7-6.south)!0.5!(A-7-7.south)$) {};

\node (left1) at ($(A-3-1.south west)!0.5!(A-4-1.north west)-(.15,0)$) {};
\node (left2) at ($(A-6-1.south west)!0.5!(A-7-1.north west)-(.15,0)$) {};

\node (right1) at ($(A-3-7.south east)!0.5!(A-4-7.north east)+(.15,0)$) {};
\node (right2) at ($(A-6-7.south east)!0.5!(A-7-7.north east)+(.15,0)$) {};

\node (a_1) at ($(A-3-3.south east)!0.5!(A-3-4.south west)$) {};
\node (a_2) at ($(A-4-3.north east)!0.5!(A-4-4.north west)$) {};
\node (b_1) at ($(A-3-6.south east)!0.5!(A-3-7.south west)$) {};
\node (b_2) at ($(A-4-6.north east)!0.5!(A-4-7.north west)$) {};

\node (mid11) at ($(a_1)!0.5!(a_2)$) {};
\node (mid12) at ($(b_1)!0.5!(b_2)$) {};

\begin{pgfonlayer}{background}


\fill[black!20!white] (mid11) rectangle (top2.south);
\fill[black!20!white] (mid12) rectangle (right2.west);

\end{pgfonlayer}

\end{tikzpicture}
.
\end{equation*}
\noindent We have retained the shading denoting the super-diagonal blocks, 
and   
we have also boldfaced the nonzero entries and the diagonal entries of zero columns.  
An almost-Jordan differential matrix $P^{-1} {\underline D} P$ is Jordan iff 
the matrix ${\underline D} P$, which is  related to  ${\underline D}$ by a permutation of columns, 
has  each boldfaced ${\mathbf 1}$ immediately following the boldfaced ${\mathbf 0}$ in the same row. 
Permuting columns 3 and 4 suffices for this example, and  
 
 \begin{equation*}
P =
 \begin{tikzpicture}[baseline={([yshift=-1.4ex]current bounding box.center)}]

[decoration=brace]

\tikzset{
    node style ge/.style={circle,minimum size=.75cm},
}
\pgfdeclarelayer{background}
\pgfdeclarelayer{foreground}
\pgfsetlayers{background,main,foreground}
\matrix (A) [matrix of math nodes,
             nodes = {node style ge},
             left delimiter  = {[}, 
             right delimiter = {]}, 
             inner sep=-2pt, 
             row sep=-.3cm, 
             column sep=0cm 
             ]
{
1 & 0 & 0 & 0 & 0 & 0 & 0 \\
0 & 1 & 0 & 0 & 0 & 0 & 0 \\
0 & 0 & 0 & 1 & 0 & 0 & 0 \\
0 & 0 & 1 & 0 & 0 & 0 & 0 \\
0 & 0 & 0 & 0 & 1 & 0 & 0 \\
0 & 0 & 0 & 0 & 0 & 1 & 0 \\
0 & 0 & 0 & 0 & 0 & 0 & 1 \\
};
\node[above=-1pt] at (A-1-1.north) {$\tensor[_1]{\underline{a}}{_0} $};
\node[above=-1pt] at (A-1-2.north) {$\tensor[_1]{\underline{b}}{_0} $};
\node[above=-1pt] at (A-1-3.north) {$\tensor[_2]{\underline{ab}}{_1} $};
\node[above=-1pt] at (A-1-4.north) {$\tensor[_3]{\underline{c}}{_0} $};
\node[above=-1pt] at (A-1-5.north) {$\tensor[_4]{\underline{bc}}{_1} $};
\node[above=-1pt] at (A-1-6.north) {$\tensor[_5]{\underline{ac}}{_1} $};
\node[above=-1pt] at (A-1-7.north) {$\tensor[_6]{\underline{abc}}{_2} $};
\node[left=6pt,yshift=-.05cm] at (A-1-1.west) {$\tensor[_1]{\underline{a}}{_0} $};
\node[left=6pt,yshift=-.05cm] at (A-2-1.west) {$\tensor[_1]{\underline{b}}{_0} $};
\node[left=6pt,yshift=-.05cm] at (A-3-1.west) {$\tensor[_3]{\underline{c}}{_0} $};
\node[left=6pt,yshift=-.05cm] at (A-4-1.west) {$\tensor[_2]{\underline{ab}}{_1} $};
\node[left=6pt,yshift=-.05cm] at (A-5-1.west) {$\tensor[_4]{\underline{bc}}{_1} $};
\node[left=6pt,yshift=-.05cm] at (A-6-1.west) {$\tensor[_5]{\underline{ac}}{_1} $};
\node[left=6pt,yshift=-.05cm] at (A-7-1.west) {$\tensor[_6]{\underline{abc}}{_2}  $};

\end{tikzpicture}
\end{equation*}
%
\noindent  produces the Jordan  matrix 
 \begin{equation*}
P^{-1} {\underline D} P =
 \begin{tikzpicture}[baseline={([yshift=-1.4ex]current bounding box.center)}]

[decoration=brace]

\tikzset{
    node style ge/.style={circle,minimum size=.75cm},
}
\pgfdeclarelayer{background}
\pgfdeclarelayer{foreground}
\pgfsetlayers{background,main,foreground}
\matrix (A) [matrix of math nodes,
             nodes = {node style ge},
             left delimiter  = {[}, 
             right delimiter = {]}, 
             inner sep=-2pt, 
             row sep=-.3cm, 
             column sep=0cm 
             ]
{
{\mathbf 0} & 0 & 0 & 0 & 0 & 0 & 0 \\
0 & {\mathbf 0} & {\mathbf 1}& 0  & 0 & 0 & 0 \\
0 & 0 & 0 & 0 &0 & 0 & 0 \\
0 & 0 & 0 & {\mathbf 0} & {\mathbf 1} & 0 & 0 \\
0 & 0 & 0 & 0 & 0 & 0 & 0 \\
0 & 0 & 0 & 0 & 0 & {\mathbf 0} & {\mathbf 1} \\
0 & 0 & 0 & 0 & 0 & 0 & 0 \\
};
\node[above=-1pt] at (A-1-1.north) {$\tensor[_1]{\underline{a}}{_0} $};
\node[above=-1pt] at (A-1-2.north) {$\tensor[_1]{\underline{b}}{_0}  $};
\node[above=-1pt] at (A-1-3.north) {$\tensor[_2]{\underline{ab}}{_1} $};
\node[above=-1pt] at (A-1-4.north) {$\tensor[_3]{\underline{c}}{_0} $};
\node[above=-1pt] at (A-1-5.north) {$ \tensor[_4]{\underline{bc}}{_1} $};
\node[above=-1pt] at (A-1-6.north) {$\tensor[_5]{\underline{ac}}{_1}  $};
\node[above=-1pt] at (A-1-7.north) {$\tensor[_6]{\underline{abc}}{_2}  $};
\node[left=6pt,yshift=-.05cm] at (A-1-1.west) {$\tensor[_1]{\underline{a}}{_0} $};
\node[left=6pt,yshift=-.05cm] at (A-2-1.west) {$\tensor[_1]{\underline{b}}{_0} $};
\node[left=6pt,yshift=-.05cm] at (A-3-1.west) {$ \tensor[_2]{\underline{ab}}{_1}  $};
\node[left=6pt,yshift=-.05cm] at (A-4-1.west) {$\tensor[_3]{\underline{c}}{_0}  $};
\node[left=6pt,yshift=-.05cm] at (A-5-1.west) {$\tensor[_4]{\underline{bc}}{_1} $};
\node[left=6pt,yshift=-.05cm] at (A-6-1.west) {$\tensor[_5]{\underline{ac}}{_1} $};
\node[left=6pt,yshift=-.05cm] at (A-7-1.west) {$\tensor[_6]{\underline{abc}}{_2} $};

\end{tikzpicture}
.
\end{equation*}
\noindent  
The  decomposition of the Jordan matrix into its Jordan blocks represents the 
decomposition of the filtered complex into indecomposable/basic summands.    
We now  list the indecomposable summands, denoting by $\langle v \rangle$ the 
linear span of a vector $v \in V$:

\begin{itemize}[leftmargin=*]
\item	 The Jordan block matrix  
%
 \begin{tikzpicture}[baseline={([yshift=-1.4ex]current bounding box.center)}]

[decoration=brace]
\tikzset{
    node style ge/.style={circle,minimum size=.75cm},
}
\pgfdeclarelayer{background}
\pgfdeclarelayer{foreground}
\pgfsetlayers{background,main,foreground}
\matrix (A) [matrix of math nodes,
             nodes = {node style ge},
             left delimiter  = {[}, 
             right delimiter = {]}, 
             inner sep=-2pt,
		row sep=-.3cm,
		column sep=-.25cm
             ]
{
\mathbf{0}  \\
};
\node[above=-1pt] at (A-1-1.north) {$\tensor[_1]{\underline{a}}{_0} $};
\node[left=6pt,yshift=-.05cm] at (A-1-1.west) {$\tensor[_1]{\underline{a}}{_0} $};

\end{tikzpicture}
represents  the filtered complex  \hfil\break
\begin{equation*}
\begin{tikzcd}[row sep=tiny, column sep=tiny]
&  \vdots \ar{d} &  \vdots \ar{d}&  \vdots \ar{d}&  \vdots \ar{d} &   \vdots \ar{d} 
& \vdots \ar{d} && & \vdots \ar{d}\\ 
 \cdots  \ar[hook]{r}  & 0   \ar{d} \ar[hook]{r} & 0   \ar{d} \ar[hook]{r} &0 \ar{d} \ar[hook]{r} &  0 \ar{d} \ar[hook]{r} 
 &  0  \ar{d} \ar[hook]{r} 
 &  0  \ar{d} \ar[hook]{r} 
 &  \cdots & &0 \ar{d}\\  
\cdots  \ar[hook]{r} &   0 \ar{d} \ar[hook]{r} &  \langle  \tensor[_1]{\underline{a}}{_0}  \rangle \ar{d} \ar[hook]{r}&  \langle  \tensor[_1]{\underline{a}}{_0}  \rangle  \ar{d} \ar[hook]{r} & 
 \langle  \tensor[_1]{\underline{a}}{_0}  \rangle \ar{d} \ar[hook]{r} &  \langle  \tensor[_1]{\underline{a}}{_0}  \rangle \ar{d} \ar[hook]{r} & \langle  \tensor[_1]{\underline{a}}{_0}  \rangle  \ar{d} \ar[hook]{r} 
 & \cdots & & \langle  \tensor[_1]{\underline{a}}{_0}  \rangle \ar{d} \\   
 \cdots  \ar[hook]{r}  & 0   \ar{d} \ar[hook]{r} & 0   \ar{d} \ar[hook]{r} &0  \ar{d} \ar[hook]{r} &  0  \ar{d} \ar[hook]{r} & 0  \ar{d} \ar[hook]{r} &
0  \ar{d} \ar[hook]{r} 
& \cdots & &0 \ar{d}\\  
  &  \vdots &  \vdots&  \vdots &  \vdots &  \vdots &\vdots 
  & & & \vdots \\ 
&  p = 0   &  p = 1 &  p = 2 &  p = 3 &  p = 4 &   p = 5  
& & & {\rm colim}\\ 
     \end{tikzcd}
\end{equation*}
\noindent
This filtered complex is basic because in  ${\mathcal F}$ it  is isomorphic to the filtered complex 
\begin{equation*}
\begin{tikzcd}[row sep=tiny, column sep=tiny] 
      \cdots \ar[hook]{r}  & 0  \ar[hook]{r} & J[0]    
      \ar[hook]{r} & J[0]  \ar[hook]{r} &  J[0] \ar[hook]{r}& J[0]  \ar[hook]{r} & J[0]  \ar[hook]{r} 
      & \cdots & & J[0],  \\  
 &  p =0 & p = 1 & p = 2 &  p = 3 & p = 4 & p = 5 
 &  & & {\rm colim} \\
     \end{tikzcd}
\end{equation*}
\noindent  which has the interval complex $J[0]$ as  colimit.    
So the filtered complex is an indecomposable  object in the category ${\mathcal F}$,  and also an indecomposable object in the 
quotient category $\coimage P_0$ where it is isomorphic to $[1,\infty)_0$.
Here the subscript $0$ labels the degree/dimension of the quotient category $ \coimage P_0$, and the interval 
subset  $[1, \infty) \subseteq {\mathbb Z}$ encodes the levels $p$ that are isomorphic to $J[0]$.   
The equivalence ${\coimage P_0} \to {\image P_0}$ corresponds to the 
homology functor $H_0$ acting on a diagram of complexes, producing  the 
 indecomposable  barcode persistence vector space $[1,\infty)$: 
\begin{equation*}
\begin{tikzcd}[row sep=tiny, column sep=tiny] 
   \cdots \ar{r}  & 0  \ar{r} & {\mathbb Q}   
      \ar{r} & {\mathbb Q}   \ar{r} & {\mathbb Q} \ar{r}& {\mathbb Q} \ar{r} & {\mathbb Q} \ar{r} 
      & \cdots  . &&  \\  
  &  p =0 & p = 1 & p = 2 &  p = 3 & p = 4  & p = 5 
  & & \phantom{ {\rm colim}}\\
     \end{tikzcd}
\end{equation*}
\noindent For any $n \ne 0$, the filtered complex is a zero object in the quotient category $ \coimage P_n$.  
  
\item	  The Jordan block matrix 
 \begin{tikzpicture}[baseline={([yshift=-1.4ex]current bounding box.center)}]

[decoration=brace]
\tikzset{
    node style ge/.style={circle,minimum size=.75cm},
}
\pgfdeclarelayer{background}
\pgfdeclarelayer{foreground}
\pgfsetlayers{background,main,foreground}
\matrix (A) [matrix of math nodes,
             nodes = {node style ge},
             left delimiter  = {[}, 
             right delimiter = {]}, 
             inner sep=-2pt, 
             row sep=-.3cm, 
             column sep=0cm 
             ]
{
\mathbf{0} & \mathbf{1} \\
0 & 0 \\
};
\node[above=-1pt] at (A-1-1.north) {$\tensor[_1]{\underline{b}}{_0} $};
\node[above=-1pt] at (A-1-2.north) {$\tensor[_2]{\underline{ab}}{_1} $};
\node[left=6pt,yshift=-.05cm] at (A-1-1.west) {$\tensor[_1]{\underline{b}}{_0} $};
\node[left=6pt,yshift=-.05cm] at (A-2-1.west) {$\tensor[_2]{\underline{ab}}{_1}  $};

\node (corner_tr) at (A-1-2.north east) {};

\node (a_1) at ($(A-1-1.south east)!0.8!(A-1-2.south west)$) {};
\node (a_2) at ($(A-2-1.north east)!0.8!(A-2-2.north west)$) {};

\node (mid11) at ($(a_1)!0.2!(a_2)$) {};

\begin{pgfonlayer}{background}

\fill[black!20!white] (mid11) rectangle (corner_tr);

\end{pgfonlayer}

\end{tikzpicture}
represents the filtered complex   
\begin{equation*}
\begin{tikzcd}[row sep=tiny, column sep=tiny]
&  \vdots \ar{d} &  \vdots \ar{d}&  \vdots \ar{d}&  \vdots \ar{d} &   \vdots \ar{d} &  \vdots \ar{d} 
&& & \vdots \ar{d}\\ 
 \cdots  \ar[hook]{r}  & 0   \ar{d} \ar[hook]{r} & 0   \ar{d} \ar[hook]{r} &0 \ar{d} \ar[hook]{r} &  0 \ar{d} \ar[hook]{r} 
 &  0  \ar{d} \ar[hook]{r} &  0  \ar{d} \ar[hook]{r} 
 & \cdots & &0 \ar{d}\\  
  \cdots  \ar[hook]{r} &   0 \ar{d} \ar[hook]{r} & 0 \ar{d} \ar[hook]{r}& \langle \tensor[_2]{\underline{ab}}{_1} \rangle   \ar{d} \ar[hook]{r} & 
 \langle \tensor[_2]{\underline{ab}}{_1} \rangle \ar{d} \ar[hook]{r} &  \langle \tensor[_2]{\underline{ab}}{_1} \rangle \ar{d} \ar[hook]{r} &  \langle \tensor[_2]{\underline{ab}}{_1} \rangle  \ar{d} \ar[hook]{r} 
 &\cdots & & \langle \tensor[_2]{\underline{ab}}{_1} \rangle \ar{d}\\   
      \cdots  \ar[hook]{r} &   0 \ar{d} \ar[hook]{r} & \langle \tensor[_1]{\underline{b}}{_0} \rangle \ar{d} \ar[hook]{r}& \langle \tensor[_1]{\underline{b}}{_0} \rangle\ar{d} \ar[hook]{r} & 
 \langle \tensor[_1]{\underline{b}}{_0}\rangle \ar{d} \ar[hook]{r} &  \langle \tensor[_1]{\underline{b}}{_0} \rangle \ar{d} \ar[hook]{r} & \langle \tensor[_1]{\underline{b}}{_0}  \rangle\ar{d} \ar[hook]{r} 
 &\cdots & & \langle \tensor[_1]{\underline{b}}{_0} \rangle \ar{d} \\   
\cdots  \ar[hook]{r}  & 0   \ar{d} \ar[hook]{r} & 0   \ar{d} \ar[hook]{r} &0  \ar{d} \ar[hook]{r} &  0  \ar{d} \ar[hook]{r} & 0  \ar{d} \ar[hook]{r} &
0  \ar{d} \ar[hook]{r} 
& \cdots 
& &0 \ar{d}\\  
&  \vdots &  \vdots&  \vdots &  \vdots &  \vdots &\vdots 
& & & \vdots \\ 
 &  p = 0   &  p = 1 &  p = 2 &  p = 3 &  p = 4 &   p = 5  
 & & & {\rm colim}\\ 
     \end{tikzcd}
\end{equation*}
This filtered complex is basic because in ${\mathcal F}$ it is isomorphic to the filtered complex 
\begin{equation*}
\begin{tikzcd}[row sep=tiny, column sep=tiny] 
  \cdots \ar[hook]{r}  & 0  \ar[hook]{r} & J[0]    
      \ar[hook]{r} & K[0]  \ar[hook]{r} &  K[0] \ar[hook]{r}& K[0]  \ar[hook]{r} & K[0]  \ar[hook]{r} 
      & \cdots & & K[0],  \\  
 &  p =0 & p = 1 & p = 2 &  p = 3 & p = 4 & p = 5 
 &  & & {\rm colim} \\
     \end{tikzcd}
\end{equation*}
\noindent  which has the interval complex $K[0]$ as  colimit.    
So the filtered complex is an indecomposable  object in the category ${\mathcal F}$,  and also an indecomposable object in the 
quotient category $\coimage P_0$ where it is isomorphic to $[1,2)_0$.
Here the subscript $0$ labels the degree/dimension of the quotient category $ \coimage P_0$, and the interval 
subset  $[1, 2) \subseteq {\mathbb Z}$ encodes the levels $p$ that are isomorphic to $J[0]$.   
The equivalence ${\coimage P_0} \to {\image P_0}$ corresponds to the 
homology functor $H_0$ acting on a diagram of complexes, producing  the 
 indecomposable  barcode persistence vector space $[1,2)$:    
\begin{equation*}
\begin{tikzcd}[row sep=tiny, column sep=tiny] 
   \cdots \ar{r}  & 0  \ar{r} & {\mathbb Q}   
      \ar{r} &0  \ar{r} &0 \ar{r}&0 \ar{r} &0 \ar{r} 
      & \cdots  .&&  \\  
  &  p =0 & p = 1 & p = 2 &  p = 3 & p = 4  & p = 5 
  & & \phantom{ {\rm colim}} \\
     \end{tikzcd}
\end{equation*}
\noindent For any $n \ne 0$, the filtered complex is a zero object in the quotient category $ \coimage P_n$.  
  
\item  The Jordan block matrix 
%
 \begin{tikzpicture}[baseline={([yshift=-1.4ex]current bounding box.center)}]

[decoration=brace]
\tikzset{
    node style ge/.style={circle,minimum size=.75cm},
}
\pgfdeclarelayer{background}
\pgfdeclarelayer{foreground}
\pgfsetlayers{background,main,foreground}
\matrix (A) [matrix of math nodes,
             nodes = {node style ge},
             left delimiter  = {[}, 
             right delimiter = {]}, 
             inner sep=-2pt, 
             row sep=-.3cm, 
             column sep=0cm 
             ]
{
\mathbf{0} & \mathbf{1} \\
0 & 0 \\
};
\node[above=-1pt] at (A-1-1.north) {$\tensor[_3]{\underline{c}}{_0}$};
\node[above=-1pt] at (A-1-2.north) {$\tensor[_4]{\underline{bc}}{_1} $};
\node[left=6pt,yshift=-.05cm] at (A-1-1.west) {$\tensor[_3]{\underline{c}}{_0}$};
\node[left=6pt,yshift=-.05cm] at (A-2-1.west) {$\tensor[_4]{\underline{bc}}{_1}$};

\node (corner_tr) at (A-1-2.north east) {};

\node (a_1) at ($(A-1-1.south east)!0.8!(A-1-2.south west)$) {};
\node (a_2) at ($(A-2-1.north east)!0.8!(A-2-2.north west)$) {};

\node (mid11) at ($(a_1)!0.2!(a_2)$) {};

\begin{pgfonlayer}{background}

\fill[black!20!white] (mid11) rectangle (corner_tr);

\end{pgfonlayer}

\end{tikzpicture}
 %
represents the filtered complex    
\begin{equation*}
\begin{tikzcd}[row sep=tiny, column sep=tiny]
%
&  \vdots \ar{d}&  \vdots \ar{d}&  \vdots \ar{d} &   \vdots \ar{d} &  \vdots \ar{d} & \vdots \ar{d} && & \vdots \ar{d}\\ 
 \cdots  \ar[hook]{r}  
 & 0   \ar{d} \ar[hook]{r} &0 \ar{d} \ar[hook]{r} &  0 \ar{d} \ar[hook]{r} 
 &  0  \ar{d} \ar[hook]{r} &  0  \ar{d} \ar[hook]{r} & 0  \ar{d} \ar[hook]{r} &
  \cdots 
 & &0 \ar{d}\\  
    \cdots  \ar[hook]{r} 
    & 0 \ar{d} \ar[hook]{r}&0 \ar{d} \ar[hook]{r} & 
0 \ar{d} \ar[hook]{r} &  \langle  \tensor[_4]{\underline{bc}}{_1} \rangle  \ar{d} \ar[hook]{r} &  \langle  \tensor[_4]{\underline{bc}}{_1} \rangle  \ar{d} \ar[hook]{r} &  \langle  \tensor[_4]{\underline{bc}}{_1} \rangle  \ar{d} \ar[hook]{r} &
\cdots & & \langle  \tensor[_4]{\underline{bc}}{_1} \rangle \ar{d} \\   
     \cdots  \ar[hook]{r} 
& 0 \ar{d} \ar[hook]{r}   &0  \ar{d} \ar[hook]{r} & 
\langle \tensor[_3]{\underline{c}}{_0} \rangle  \ar{d} \ar[hook]{r} & \langle \tensor[_3]{\underline{c}}{_0} \rangle \ar{d} \ar[hook]{r} & \langle \tensor[_3]{\underline{c}}{_0} \rangle  \ar{d} \ar[hook]{r} & \langle \tensor[_3]{\underline{c}}{_0} \rangle  \ar{d} \ar[hook]{r} &
\cdots & &\langle \tensor[_3]{\underline{c}}{_0} \rangle \ar{d} \\   
 \cdots  \ar[hook]{r}  
 & 0   \ar{d} \ar[hook]{r} &0  \ar{d} \ar[hook]{r} &  0  \ar{d} \ar[hook]{r} & 0  \ar{d} \ar[hook]{r} &
0  \ar{d} \ar[hook]{r} &0  \ar{d} \ar[hook]{r} & \cdots 
& &0 \ar{d}\\  
%
 &  \vdots&  \vdots &  \vdots &  \vdots &\vdots &\vdots & & & \vdots \\ 
%
&  p = 1 &  p = 2 &  p = 3 &  p = 4 &   p = 5  & p = 6 & & & {\rm colim}\\ 
     \end{tikzcd}
\end{equation*}
This filtered complex is basic because in ${\mathcal F}$ it is isomorphic to the filtered complex 
\begin{equation*}
\begin{tikzcd}[row sep=tiny, column sep=tiny] 
   \cdots \ar[hook]{r}  
   & 0    \ar[hook]{r} &0  \ar[hook]{r} &  J[0] \ar[hook]{r}& K[0]  \ar[hook]{r} & K[0]  \ar[hook]{r} & K[0]  \ar[hook]{r} & \cdots & & K[0]  \\  
& p = 1 & p = 2 &  p = 3 & p = 4 & p = 5 & p = 6 &  & & {\rm colim} \\
     \end{tikzcd}
\end{equation*}
\noindent  which has the interval complex $K[0]$ as  colimit.    
So the filtered complex is an indecomposable  object in the category ${\mathcal F}$,  and also an indecomposable object in the 
quotient category $\coimage P_0$ where it is isomorphic to $[3,4)_0$.
Here the subscript $0$ labels the degree/dimension of the quotient category $ \coimage P_0$, and the interval 
subset  $[3,4) \subseteq {\mathbb Z}$ encodes the levels $p$ that are isomorphic to $J[0]$.   
The equivalence ${\coimage P_0} \to {\image P_0}$ corresponds to the 
homology functor $H_0$ acting on a diagram of complexes, producing  the 
 indecomposable  barcode persistence vector space $[3,4)$:    
\begin{equation*}
\begin{tikzcd}[row sep=tiny, column sep=tiny] 
   \cdots \ar{r}  
   & 0    \ar{r} &0  \ar{r} & {\mathbb Q} \ar{r}&0 \ar{r} &0 \ar{r} &0 \ar{r}& \cdots  . &&  \\  
  & p = 1 & p = 2 &  p = 3 & p = 4  & p = 5 & p = 6 & & \phantom{ {\rm colim}} \\
     \end{tikzcd}
\end{equation*}
\noindent For any $n \ne 0$, the filtered complex is a zero object in the quotient category $ \coimage P_n$.  

\item	 The Jordan block matrix 
 \begin{tikzpicture}[baseline={([yshift=-1.4ex]current bounding box.center)}]

[decoration=brace]
\tikzset{
    node style ge/.style={circle,minimum size=.75cm},
}
\pgfdeclarelayer{background}
\pgfdeclarelayer{foreground}
\pgfsetlayers{background,main,foreground}
\matrix (A) [matrix of math nodes,
             nodes = {node style ge},
             left delimiter  = {[}, 
             right delimiter = {]}, 
             inner sep=-2pt, 
             row sep=-.3cm, 
             column sep=0cm 
             ]
{
\mathbf{0} & \mathbf{1} \\
0 & 0 \\
};
\node[above=-1pt] at (A-1-1.north) {$\tensor[_5]{\underline{ac}}{_1}$};
\node[above=-1pt] at (A-1-2.north) {$\tensor[_6]{\underline{abc}}{_2} $};
\node[left=6pt,yshift=-.05cm] at (A-1-1.west) {$\tensor[_5]{\underline{ac}}{_1}$};
\node[left=6pt,yshift=-.05cm] at (A-2-1.west) {$\tensor[_6]{\underline{abc}}{_2} $};

\node (corner_tr) at (A-1-2.north east) {};

\node (a_1) at ($(A-1-1.south east)!0.8!(A-1-2.south west)$) {};
\node (a_2) at ($(A-2-1.north east)!0.8!(A-2-2.north west)$) {};

\node (mid11) at ($(a_1)!0.2!(a_2)$) {};

\begin{pgfonlayer}{background}

\fill[black!20!white] (mid11) rectangle (corner_tr);

\end{pgfonlayer}

\end{tikzpicture}
%
represents the filtered complex   
\begin{equation*}
\begin{tikzcd}[row sep=tiny, column sep=tiny]
%
&  \vdots \ar{d}&  \vdots \ar{d}&  \vdots \ar{d} &   \vdots \ar{d} &  \vdots \ar{d} & \vdots \ar{d} && & \vdots \ar{d}\\ 
 \cdots  \ar[hook]{r}  
 & 0   \ar{d} \ar[hook]{r} &0 \ar{d} \ar[hook]{r} &  0 \ar{d} \ar[hook]{r} 
 &  0  \ar{d} \ar[hook]{r} &  0  \ar{d} \ar[hook]{r} & 0  \ar{d} \ar[hook]{r} &
  \cdots 
 & &0 \ar{d}\\  
       \cdots  \ar[hook]{r} 
       & 0 \ar{d} \ar[hook]{r}&0 \ar{d} \ar[hook]{r} & 
0 \ar{d} \ar[hook]{r} &  0 \ar{d} \ar[hook]{r} &0  \ar{d} \ar[hook]{r} & \langle \tensor[_6]{\underline{abc}}{_2} \rangle  \ar{d} \ar[hook]{r} &
\cdots & & \langle \tensor[_6]{\underline{abc}}{_2} \rangle  \ar{d} \\   
     \cdots  \ar[hook]{r} 
     & 0 \ar{d} \ar[hook]{r}&0  \ar{d} \ar[hook]{r} & 
0 \ar{d} \ar[hook]{r} &  0 \ar{d} \ar[hook]{r} &\langle \tensor[_5]{\underline{ac}}{_1} \rangle \ar{d} \ar[hook]{r} & \langle \tensor[_5]{\underline{ac}}{_1} \rangle  \ar{d} \ar[hook]{r} &
\cdots & &\langle \tensor[_5]{\underline{ac}}{_1} \rangle\ar{d} \\   
 \cdots  \ar[hook]{r}  
 & 0   \ar{d} \ar[hook]{r} &0  \ar{d} \ar[hook]{r} &  0  \ar{d} \ar[hook]{r} & 0  \ar{d} \ar[hook]{r} &
0  \ar{d} \ar[hook]{r} &0  \ar{d} \ar[hook]{r} & \cdots 
& &0 \ar{d}\\  
%
 &  \vdots&  \vdots &  \vdots &  \vdots &\vdots &\vdots & & & \vdots \\ 
%
&  p = 1 &  p = 2 &  p = 3 &  p = 4 &   p = 5  & p = 6 & & & {\rm colim}\\ 
     \end{tikzcd}
\end{equation*}
This filtered complex is basic because in ${\mathcal F}$ it is isomorphic to the filtered complex 
\begin{equation*}
\begin{tikzcd}[row sep=tiny, column sep=tiny] 
   \cdots \ar[hook]{r}  
   & 0     \ar[hook]{r} &0  \ar[hook]{r} &  0 \ar[hook]{r}& 0  \ar[hook]{r} & J[1] \ar[hook]{r} & K[1]  \ar[hook]{r} & \cdots & & K[1]  \\  
& p = 1 & p = 2 &  p = 3 & p = 4 & p = 5 & p = 6 &  & & {\rm colim} \\
     \end{tikzcd}
\end{equation*}
\noindent  which has the interval complex $K[1]$ as  colimit.    
So the filtered complex is an indecomposable  object in the category ${\mathcal F}$,  and also an indecomposable object in the 
quotient category $\coimage P_1$ where it is isomorphic to $[5,6)_1$.
Here the subscript $1$ labels the degree/dimension of the quotient category $ \coimage P_1$, and the interval 
subset  $[5,6) \subseteq {\mathbb Z}$ encodes the levels $p$ that are isomorphic to $J[1]$.   
The equivalence ${\coimage P_1} \to {\image P_1}$ corresponds to the 
homology functor $H_1$ acting on a diagram of complexes, producing  the 
 indecomposable  barcode persistence vector space $[5,6)$: 
\begin{equation*}
\begin{tikzcd}[row sep=tiny, column sep=tiny] 
   \cdots \ar{r}  
   & 0     \ar{r} &0  \ar{r} & 0 \ar{r}&0 \ar{r} &{\mathbb Q} \ar{r} &0 \ar{r}& \cdots  &&  \\  
 & p = 1 & p = 2 &  p = 3 & p = 4  & p = 5 & p = 6 & & \phantom{ {\rm colim}} \\
     \end{tikzcd}
\end{equation*}
\noindent For any $n \ne1$, the filtered complex is a zero object in the quotient category $ \coimage P_n$.   

\end{itemize}
This completes the decomposition of the filtered complex $F$ in the category ${\mathcal F}$.  
As an object in the quotient category ${\coimage P_0}$, the filtered complex  $F$ is  isomorphic to $[0,\infty)_0 \oplus [1,2)_0 \oplus [3,4)_0$. 
As an object in the quotient category ${\coimage P_1}$,  the filtered complex  $F$ is  isomorphic to $[5,6)_1$. 
For any other value of $n$, the filtered complex $F$ is a zero object in the quotient category $\coimage P_n$.

 \end{example}

 \subsection{Reverse Structural Equivalence}  
 \label{subsection4.2}  

Special adapted bases help to intermediate between the Matrix Structural Theorem and Categorical Structural Theorem.  
In Proposition \ref{forego},  we established the existence of a special adapted basis using 
the Matrix Structural Theorem \ref{postnon}.       
Now in the reverse direction,  we establish the existence of a special adapted basis using the Categorical Structural Theorem \ref{classFilt}: 
 \begin{proposition}
 \label{ss} 
A filtered complex admits a special adapted basis.   
 \end{proposition}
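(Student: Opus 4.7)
The plan is to invoke the Categorical Structural Theorem \ref{classFilt} to decompose the filtered complex as a direct sum of basic summands, and then concatenate canonical bases of these summands to produce a special adapted basis for the whole. By Theorem \ref{classFilt}, the filtered complex $F$ admits a finite decomposition $F \simeq F_1 \oplus \cdots \oplus F_N$ where each $F_i$ is basic, meaning its colimit is isomorphic to an interval complex of type $J[n_i]$ or $K[n_i]$.

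Next I would exhibit a natural adapted basis for each summand $F_i$ by inspecting the possible subobject chains of its colimit, which in the basic case are exhausted by $0 \hookrightarrow J[n]$, $0 \hookrightarrow K[n]$, and $J[n] \hookrightarrow K[n]$. Hence when the colimit is $J[n_i]$, the summand admits a basis consisting of a single degree-$n_i$ vector $v_i$ first appearing at some level $p_i$, with $\partial v_i = 0$; when the colimit is $K[n_i]$, the summand admits a basis of two vectors, a degree-$n_i$ vector $v_i$ first appearing at some level $p_i$ and a degree-$(n_i{+}1)$ vector $w_i$ first appearing at some level $q_i \ge p_i$, with $\partial w_i = v_i$ and $\partial v_i = 0$. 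Relative to such a local basis, the restricted boundary is represented by a single Jordan block of type $J$ or $K$. I would then form the global concatenation of these local bases and totally order it by nondecreasing degree, breaking ties within a fixed degree by nondecreasing level, and breaking any remaining ties arbitrarily. That this global basis is adapted follows from the fact that pure degree and filtration level are preserved by the direct sum decomposition, so each filtration vector space $\tensor[_p]{V}{_n}$ is spanned by precisely those chosen basis vectors of degree $n$ and level at most $p$.

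The main thing to verify is that the resulting matrix $D$ of the colimit boundary is almost-Jordan. Since $\partial$ acts within each summand, $D$ has exactly one nonzero entry (equal to $1$) for each $K$-type summand, located in the row indexed by $v_i$ and the column indexed by $w_i$, and is zero elsewhere; moreover $D$ is block-superdiagonal because $\partial$ has degree $-1$. The simultaneous row-and-column permutation that regroups each $(v_i, w_i)$ pair into adjacent positions (and leaves the $J$-type singletons fixed) conjugates $D$ into genuine Jordan form, so $D$ is almost-Jordan as required. The only subtle point, and essentially the main obstacle, is bookkeeping around the degree-first ordering: for a $K[n_i]$ summand the paired basis vectors $v_i$ and $w_i$ are generally not adjacent in the global list because $v_i$ sits among the degree-$n_i$ vectors and $w_i$ among the degree-$(n_i{+}1)$ vectors, typically interleaved with basis vectors from other summands. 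This interleaving is precisely why the weaker almost-Jordan condition, rather than Jordan itself, is the correct notion featured in Proposition \ref{ss}.
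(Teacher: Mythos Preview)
Your proof is correct and follows essentially the same approach as the paper's own proof: decompose via the Categorical Structural Theorem into basic summands, take the obvious one- or two-element special adapted basis on each summand, and concatenate with degree-then-level ordering. The paper's version is considerably terser, asserting without elaboration that each basic summand admits a special adapted basis and that the union is special adapted ``with appropriate ordering''; you have spelled out the details it omits, including the helpful observation about why almost-Jordan rather than Jordan is the natural outcome under degree-first ordering.
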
 
 
 \begin{proof} 
 The Categorical Structural Theorem decomposes the filtered complex as a finite direct sum of indecomposables.  
Each indecomposable summand is a basic filtered complex, so it admits a special adapted basis.    
With appropriate ordering, 
the union over the summands of these basis elements  is  
a special adapted basis for the direct sum filtered complex.   
 \qed \end{proof} 

An automorphism of a filtered complex transforms an adapted basis to another adapted basis.  
The change of basis is represented by a matrix $B$, which is block-diagonal because an automorphism preserves the degree 
of basis elements.    But the matrix $B$ need not be triangular in general.    
We call a filtered complex  {\it nondegenerate}  if $\dim( \,  \tensor[_{p+1}]{V} {_n})   \le    \dim  (\, \tensor[_{p}]{V}{_n})+1$ 
for any $p$ and any $n$. 

\begin{lemma} 
\label{trideg}
If a filtered complex is nondegenerate, then any change of adapated basis 
is represented by  a triangular matrix $B$.  
\end{lemma}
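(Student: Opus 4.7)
The plan is to analyze the change-of-basis matrix $B$ entry by entry, using the defining properties of an adapted basis, with the nondegeneracy hypothesis precisely removing the ambiguity that would otherwise allow non-triangular blocks within a fixed filtration level. Block-diagonality is automatic: since every basis element has pure degree, an automorphism of the filtered complex (being degree-preserving) carries each $V_n$ into itself, so $B$ splits into blocks indexed by degree. Accordingly, I can fix a degree $n$ and work only within the corresponding block; set $d_n(p) = \dim(\tensor[_p]{V}{_n})$, and restrict attention to the degree-$n$ elements $v_1,\dots,v_N$ of the original adapted basis and $\underline{v}_1,\dots,\underline{v}_N$ of the new one.

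First, I would establish a \emph{counting lemma}: because both bases are adapted, the number of degree-$n$ basis elements of level $\le p$ equals $d_n(p)$ in each basis. Combined with the ordering convention (nondecreasing level within each degree), this forces the $j$-th element in each basis to have the same level, namely the unique $p$ with $d_n(p-1) < j \le d_n(p)$. So $\underline{v}_j$ and $v_j$ have a common level $p$.

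Next comes the payoff from nondegeneracy. The assumption $d_n(p) - d_n(p-1) \le 1$ means that within each degree, the count of elements of level exactly $p$ is at most one; together with the previous step this implies $d_n(p-1) = j-1$ and $d_n(p) = j$. Therefore $\tensor[_p]{V}{_n}$ is spanned by exactly $v_1,\dots,v_j$. Since $\underline{v}_j \in \tensor[_p]{V}{_n}$, we can write $\underline{v}_j = \sum_{i \le j} B_{ij}\, v_i$, yielding upper-triangularity of the $n$-block. Invertibility of the diagonal entry follows from the fact that $\underline{v}_j$ has level \emph{exactly} $p$, hence does not lie in $\tensor[_{p-1}]{V}{_n} = \mathrm{span}(v_1,\dots,v_{j-1})$, forcing $B_{jj} \ne 0$.

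The only subtle point, which I view as the heart of the argument rather than a genuine obstacle, is the role of nondegeneracy: without it, one could have two (or more) adapted basis elements of the same degree and the same level, and an arbitrary invertible change of basis among them would be permitted — in general not triangular. The nondegeneracy condition is precisely what collapses each (degree, level) slot to a $1 \times 1$ block, so that the filtration structure itself dictates the triangular shape. No further input from the Matrix or Categorical Structural Theorems is needed; the lemma is purely a statement about adapted bases under the dimension-growth constraint.
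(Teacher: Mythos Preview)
Your proof is correct and follows essentially the same approach as the paper's: both arguments reduce to the observation that nondegeneracy means no two adapted basis elements share the same (degree, level) pair, so that the filtration ordering alone forces upper-triangularity. Your version is more explicit, and you additionally verify that the diagonal entries $B_{jj}$ are nonzero (hence $B$ is invertible, which is part of the paper's definition of ``triangular''); the paper's short proof leaves this point implicit.
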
 

\begin{proof} 
An automorphism takes a basis element of degree $n$ and level $p$ 
to a linear combination of basis elements of degree $n$ and level at most $p$.     
A filtered complex is nondegenerate iff  an adapted basis contains no pair of elements with  the same degree and same  level.  
In this case the linear combination does not contain any basis elements that appear later in the ordering of the basis. 
The matrix $B$ is then triangular, since it has no nonzero entries below the diagonal.   
\qed \end{proof}

\noindent 
We will construct nondegenerate filtered complexes by using the upper-left submatrices 
of a differential matrix.  We illustrate  submatrices with an example:     
\begin{example}   
The  upper-left submatrices are indicated below for the block-superdiagonal differential matrix $D: {\mathbb Q}^7 \to {\mathbb Q}^7$ 
given by: 
%

\begin{equation*}
D =
 \begin{tikzpicture}[baseline={([yshift=-1.4ex]current bounding box.center)}]

[decoration=brace]
\tikzset{
    node style ge/.style={circle,minimum size=.75cm},
}
\pgfdeclarelayer{background}
\pgfdeclarelayer{foreground}
\pgfsetlayers{background,main,foreground}
\matrix (A) [matrix of math nodes,
             nodes = {node style ge},
             left delimiter  = {[}, 
             right delimiter = {]}, 
             inner sep=-2pt,
		row sep=-.3cm,
		column sep=-.25cm
             ]
{
0 & 0 &  0 & {-1} & 0 &{-1}  & 0 \\
0 & 0 & 0 & 1 & -1 & 0 & 0 \\
0 & 0 &  0 &  0 & 1 &   1 &  0 \\
0 & 0 & 0 & 0 & 0 & 0 &1 \\
0 & 0 & 0 & 0 & 0 & 0 & 1 \\
0 & 0 & 0 & 0 & 0 & 0 & {-1} \\
0 & 0 & 0 & 0 & 0 & 0 & 0 \\
};

\node (top1) at ($(A-1-1.north)!0.5!(A-1-2.north)$) {};
\node (top2) at ($(A-1-2.north)!0.5!(A-1-3.north)$) {};
\node (top3) at ($(A-1-3.north)!0.5!(A-1-4.north)$) {};
\node (top4) at ($(A-1-4.north)!0.5!(A-1-5.north)$) {};
\node (top5) at ($(A-1-5.north)!0.5!(A-1-6.north)$) {};
\node (top6) at ($(A-1-6.north)!0.5!(A-1-7.north)$) {};


\node (left1) at ($(A-1-1.south west)!0.5!(A-2-1.north west)-(.15,0)$) {};
\node (left2) at ($(A-2-1.south west)!0.5!(A-3-1.north west)-(.15,0)$) {};
\node (left3) at ($(A-3-1.south west)!0.5!(A-4-1.north west)-(.15,0)$) {};
\node (left4) at ($(A-4-1.south west)!0.5!(A-5-1.north west)-(.15,0)$) {};
\node (left5) at ($(A-5-1.south west)!0.5!(A-6-1.north west)-(.15,0)$) {};
\node (left6) at ($(A-6-1.south west)!0.5!(A-7-1.north west)-(.15,0)$) {};

\node (right3) at ($(A-3-7.south east)!0.5!(A-4-7.north east)+(.15,0)$) {};

\node (a_1) at ($(A-1-1.south east)!0.5!(A-1-2.south west)$) {};
\node (a_2) at ($(A-2-1.north east)!0.5!(A-2-2.north west)$) {};

\node (b_1) at ($(A-2-2.south east)!0.5!(A-2-3.south west)$) {};
\node (b_2) at ($(A-3-2.north east)!0.5!(A-3-3.north west)$) {};

\node (c_1) at ($(A-3-3.south east)!0.5!(A-3-4.south west)$) {};
\node (c_2) at ($(A-4-3.north east)!0.5!(A-4-4.north west)$) {};

\node (d_1) at ($(A-4-4.south east)!0.5!(A-4-5.south west)$) {};
\node (d_2) at ($(A-5-4.north east)!0.5!(A-5-5.north west)$) {};

\node (e_1) at ($(A-5-5.south east)!0.5!(A-5-6.south west)$) {};
\node (e_2) at ($(A-6-5.north east)!0.5!(A-6-6.north west)$) {};

\node (f_1) at ($(A-6-6.south east)!0.5!(A-6-7.south west)$) {};
\node (f_2) at ($(A-7-6.north east)!0.5!(A-7-7.north west)$) {};

\node (mid11) at ($(a_1)!0.5!(a_2)$) {};
\node (mid22) at ($(b_1)!0.5!(b_2)$) {};
\node (mid33) at ($(c_1)!0.5!(c_2)$) {};
\node (mid44) at ($(d_1)!0.5!(d_2)$) {};
\node (mid55) at ($(e_1)!0.5!(e_2)$) {};
\node (mid66) at ($(f_1)!0.5!(f_2)$) {};

\draw (top1) -- (mid11.center) -- (left1);
\draw (top2) -- (mid22.center) -- (left2);
\draw (top3) -- (mid33.center) -- (left3);
\draw (top4) -- (mid44.center) -- (left4);
\draw (top5) -- (mid55.center) -- (left5);
\draw (top6) -- (mid66.center) -- (left6);

\begin{pgfonlayer}{background}


\fill[black!20!white] (mid33) rectangle (top6.south);
\fill[black!20!white] (mid66) rectangle (right3.west);

\end{pgfonlayer}

\end{tikzpicture}
.
\end{equation*}
%
Note that for  each integer $0 <  p < 7$, the  
upper-left submatrix $\tensor[_p]{D}{}: {\mathbb Q}^p \to {\mathbb Q}^p$ 
is itself a block-superdiagonal differential matrix.  
We remark that the matrix $D$ had appeared previously in  Example \ref{exfilt}, 
representing  the degenerate (not nondegenerate) 
filtered complex of Example \ref{explain}.  
\end{example}

\begin{lemma}   
 \label{revlem} 
 Any block-superdiagonal differential matrix $D$ represents the colimit boundary  of some nondegenerate filtered complex. 
\end{lemma}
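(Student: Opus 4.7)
The plan is to construct the desired filtered complex directly from $D$, adding exactly one basis vector per level so that nondegeneracy is automatic. Let $N$ be the size of the square matrix $D$, and let $e_1, \ldots, e_N$ denote the standard basis of $V = \mathbb{F}^N$. The block-superdiagonal form of $D$ partitions this basis into degree blocks, giving a grading $V = \bigoplus_n V_n$. I order the basis so that degrees are nondecreasing, i.e.\ $\deg(e_1) \le \cdots \le \deg(e_N)$; this is permitted because only the block structure of $D$ constrains the choice of ordering.

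For each integer $p$, define $\tensor[_p]{V}{} = \langle e_1, \ldots, e_p \rangle \subseteq V$ when $0 \le p \le N$, with $\tensor[_p]{V}{} = 0$ for $p < 0$ and $\tensor[_p]{V}{} = V$ for $p > N$. The grading descends via $\tensor[_p]{V}{_n} = \tensor[_p]{V}{} \cap V_n$, and the obvious inclusions form a diagram of graded subspaces which is bounded below and tempered, with colimit $(V_\bullet, \partial) = (V, D)$.

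The key verification is that each $\tensor[_p]{V}{}$ is stable under $D$, so that the diagram above becomes a filtered complex and the restricted boundary on $\tensor[_p]{V}{}$ is represented by the upper-left submatrix $\tensor[_p]{D}{}$ appearing in the preceding example. This reduces to a one-line index chase: for any $i \le p$ with $\deg(e_i) = n$, the block-superdiagonal structure gives $D e_i \in V_{n-1}$, and since the basis ordering is degree-nondecreasing, every basis element of degree less than $n$ has index strictly less than $i$, hence at most $p$. Thus $D e_i \in \tensor[_p]{V}{}$, and each head-segment is indeed a subcomplex.

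Nondegeneracy is then automatic: from level $p$ to level $p+1$ the only new basis element is $e_{p+1}$, which lives in the single degree $n = \deg(e_{p+1})$, so $\dim(\tensor[_{p+1}]{V}{_n}) = \dim(\tensor[_p]{V}{_n}) + 1$ while the dimensions in all other degrees are preserved. I do not anticipate any real obstacle here; the essential content of the lemma is just that the degree-nondecreasing ordering, made available by the block-superdiagonal structure of $D$, automatically makes each initial segment $\langle e_1, \ldots, e_p \rangle$ stable under $D$. The only care required is bookkeeping the double indexing by degree $n$ and filtration level $p$.
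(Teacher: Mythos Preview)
Your proposal is correct and takes essentially the same approach as the paper: both build the filtration by taking the initial segments $\langle e_1,\dots,e_p\rangle$ (equivalently, the upper-left $p\times p$ submatrices $\tensor[_p]{D}{}$) and observe that the result is nondegenerate with colimit boundary $D$. You are more explicit than the paper in verifying that each initial segment is $D$-stable; the paper leaves this to the remark following the proof that ``the block structure of the differential matrix $D$ is important,'' whereas you spell out the index chase. One small stylistic point: your sentence about reordering the basis so that degrees are nondecreasing is unnecessary, since the block-superdiagonal hypothesis on the \emph{matrix} $D$ already presupposes that ordering.
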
 
 
 \begin{proof}  
 Let $D: {\mathbb F}^m \to {\mathbb F}^m$ be a  block-superdiagonal differential matrix.   
 We  construct a filtered complex 
\begin{equation*}
\begin{tikzcd}[row sep=tiny, column sep=tiny] 
   \cdots \ar[hook]{r}  & \tensor[_{-1}]{V}{_\bullet}  \ar[hook]{r} & \tensor[_0]{V}{_\bullet}    
      \ar[hook]{r} & \tensor[_1]{V}{_\bullet}  \ar[hook]{r} &  \tensor[_2]{V}{_\bullet} \ar[hook]{r}& \tensor[_3]{V}{_\bullet} 
       \ar[hook]{r} & \cdots & & V_\bullet  \\  
 & \phantom{p = -1}   &  \phantom{p = 0} &  \phantom{p = 1} &  \phantom{p = 2} &  \phantom{p = 3} &  & & {\rm colim}\\ 
     \end{tikzcd}
\end{equation*}
by  specifying for each integer $p$ the complex $\tensor[_p]{V}{_\bullet}$ at  level $p$: 
\begin{itemize}[leftmargin=*]
\item For $p \le 0$, the complex is the zero complex.    
\item For  $1 <  p < m$,  the complex  is 
%
specified by the block-superdiagonal differential submatrix
$\tensor[_p]{D}{}: {\mathbb F}^p \to {\mathbb F}^p$. 
\item For $m \le p$, the complex  is 
%
specified by the initial block-superdiagonal differential matrix 
$D:{\mathbb F}^m \to {\mathbb F}^m$. 
\end{itemize} 
The arrows are the subobject inclusions $\tensor[_p]{V}{_\bullet}  \hookrightarrow \tensor[_{p+1}]{V}{_\bullet}$. 
Then the diagram is a filtered complex since the zero complex is a limit and  the complex $D:{\mathbb F}^m \to {\mathbb F}^m$ is a colimit. 
It only remains to observe that the filtered complex is nondegenerate, and that  the matrix $D$ represents its colimit boundary.   
  \qed \end{proof} 
%
\noindent Note that the block structure of the differential matrix $D$ is important in the preceding proof.   
If a differential matrix does not have block-superdiagonal structure, then an upper-left submatrix 
need not be a differential matrix in general.   

Now we have assembled the ingredients to prove:  
 
 \setcounter{repsection}{1} 
\setcounter{repeat}{7}
\begin{repproposition}
 (Reverse Structural Equivalence) 
 The Categorical Structural Theorem implies the Matrix  Structural Theorem. 
\end{repproposition}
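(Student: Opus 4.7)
The plan is to assemble the three lemmas of Section 4.2 into a short direct argument. Given a block-superdiagonal differential matrix $D : {\mathbb F}^m \to {\mathbb F}^m$, I first invoke Lemma \ref{revlem} to produce a nondegenerate filtered complex whose colimit boundary is represented by $D$ relative to the initial adapted basis coming from the standard basis of ${\mathbb F}^m$. This reduces the matrix statement to a statement about adapted bases of a specific filtered complex.

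Next I apply Proposition \ref{ss} (the ``special adapted basis'' proposition, whose proof uses the Categorical Structural Theorem \ref{classFilt}) to obtain a second adapted basis that is special, meaning the colimit boundary is represented by a block-superdiagonal almost-Jordan differential matrix ${\underline D}$. The change of basis from the initial adapted basis to the special adapted basis is represented by some invertible matrix $B$, and I then check the required structural properties of $B$.

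The block-diagonal property of $B$ is immediate: a change of adapted bases takes degree-$n$ basis elements to linear combinations of degree-$n$ basis elements, because each element of an adapted basis has pure degree. The triangularity of $B$ is the key point, and here nondegeneracy earns its keep: by Lemma \ref{trideg}, any change of adapted bases of a nondegenerate filtered complex is represented by a triangular matrix, so $B$ is triangular. Finally, since $\underline D$ and $D$ represent the same colimit boundary relative to the two adapted bases, the standard change-of-basis formula gives $\underline D = B^{-1} D B$, i.e.\ $D = B \, \underline D \, B^{-1}$, which is exactly the factorization asserted by Theorem \ref{postnon}.

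The main conceptual obstacle, which Lemma \ref{revlem} is tailored to overcome, is that an arbitrary block-superdiagonal differential matrix $D$ represents the colimit boundary of many filtered complexes, most of which are degenerate; for a degenerate filtered complex the change-of-adapted-basis matrix need not be triangular, so Proposition \ref{ss} by itself would only recover the ordinary Jordan form, not the persistence canonical form. The role of Lemma \ref{revlem} is to supply the specific nondegenerate ``staircase'' realization of $D$ in which each successive level adds exactly one new basis element, so that Lemma \ref{trideg} forces triangularity of $B$. Once these three ingredients are in place the proof is essentially a one-line synthesis, so I expect no further obstacles beyond verifying that the constructed filtered complex is indeed nondegenerate and that the initial standard basis is adapted to it.
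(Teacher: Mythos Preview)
Your proposal is correct and follows essentially the same approach as the paper's own proof: invoke Lemma \ref{revlem} to realize $D$ as the colimit boundary of a nondegenerate filtered complex, apply Proposition \ref{ss} to obtain a special adapted basis, and use Lemma \ref{trideg} to conclude that the change-of-basis matrix $B$ is block-diagonal triangular with $\underline D = B^{-1} D B$ almost-Jordan. Your additional commentary on why nondegeneracy is needed is accurate and helpful but not part of the formal argument.
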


\begin{proof}      Let $D$ be a block-superdiagonal differential matrix.  
Lemma \ref{revlem} lets us choose a nondegenerate filtered complex that is represented by $D$. 
Proposition \ref{ss} lets us make a change of basis to a special adapted basis.  
The block-diagonal matrix $B$ representing the basis change is 
triangular by  Lemma \ref{trideg}.  
Finally, the  block-superdiagonal differential ${\underline D} = B^{-1} D B$ is 
almost-Jordan because the adapted basis is special.  
\qed \end{proof} 



\section{Concluding Remarks and Directions for Further Development}
\label{section5}

\subsection{Encoding Homology}
\label{subsection5.1}

Section \ref{subsection3.2} presents  our {\it alternate framework} for persistent homology, 
based on the Krull-Schmidt decomposition of a filtered complex  afforded    
by the Categorical Structural  Theorem \ref{classFilt}.     
In this section we outline the analogous alternate framework for  the homology of   
``plain" ({\it i.e.} not filtered) complexes (see Section \ref{subsection2.3}).    The encoding of homology within a decomposition is 
 easier to explain in this  simpler setting, 
and the explanation carries over {\it mutatis mutandis} to the more complicated persistent homology framework.  
The basic idea is to ``compute" homology (or persistent homology) by discarding from a decomposition those summands that 
are {\it a priori} known to have zero homology (respectively  persistent homology).     
Note that this idea cannot be implemented in 
all situations.   Even for plain homology, it  works with coefficients in a field ${\mathbb F}$, but  
fails in the fundamental case of integer coefficients.  
For persistent homology, it  works for the ``ordinary" case of filtered complexes as discussed previously, but fails for 
  for zigazag persistent homology as discussed in Section 5.3 below.

The category  ${\mathcal C}$ of chain complexes is   Krull-Schmidt (by Proposition \ref{comp}),   and 
any indecomposable complex is isomorphic  to $J[m]$ or to $K[m]$ for some integer $m$.  
Denote by $H_n: {\mathcal C} \to {\mathcal V}$  the  degree-$n$ homology functor  from  ${\mathcal C}$ to the category ${\mathcal V}$ of 
vector spaces (see Section \ref{subsection3.1}).   
The following result is the analogue of Theorem \ref{alternate} for this simpler setting:     
 \begin{proposition}
\label{dumalt}  The (plain) homology functor $H_n: {\mathcal C} \to {\mathcal V}$ factors as 
 \begin{equation*} {\mathcal C} \to {\coimage H_n}\to {\image H_n} \to {\mathcal V}, 
\end{equation*}
where the functor ${\coimage H_n}\to {\image H_n}$ is an equivalence of categories.   
\end{proposition}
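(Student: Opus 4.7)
The plan is to mirror, in the simpler non-persistent setting, the proof sketch behind Theorem \ref{alternate}. First, I would define $\coimage H_n$ as the quotient of $\mathcal{C}$ by the congruence $f \sim f'$ iff $H_n(f) = H_n(f')$, and $\image H_n$ as the full subcategory of $\mathcal{V}$ whose objects arise as $H_n(C)$ for some complex $C$ (which is in fact all of $\mathcal{V}$, since any finite-dimensional vector space $V$ equals $H_n$ of the complex concentrated in degree $n$ with value $V$). The required factorization $\mathcal{C} \to \coimage H_n \to \image H_n \to \mathcal{V}$ then exists by construction, with the middle arrow faithful by the very definition of the congruence.

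Next, I would exploit the Krull-Schmidt structure. Proposition \ref{comp} shows $\mathcal{C}$ is Krull-Schmidt, and a quotient of a Krull-Schmidt category is Krull-Schmidt (\cite{Liu} p.~431, as invoked in the proof of Theorem \ref{alternate}), so $\coimage H_n$ is Krull-Schmidt. Proposition \ref{comp} also classifies the indecomposables of $\mathcal{C}$ as the complexes $J[m]$ and $K[m]$ for $m \in \mathbb{Z}$. Computing $H_n$ on these, one has $H_n(J[n]) \cong \mathbb{F}$, $H_n(J[m]) = 0$ for $m \neq n$, and $H_n(K[m]) = 0$ for every $m$ (since the nonzero differential in $K[m]$ is an isomorphism). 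Hence in $\coimage H_n$ every $K[m]$ and every $J[m]$ with $m \neq n$ becomes a zero object, and the unique (up to isomorphism) nonzero indecomposable is $J[n]$, with endomorphism ring $\mathbb{F}$. On the other side, the indecomposables of $\image H_n = \mathcal{V}$ are the one-dimensional spaces, all isomorphic to $\mathbb{F}$.

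Finally, I would verify that the induced functor $\Phi : \coimage H_n \to \image H_n$ is full, faithful, and essentially surjective, whence an equivalence by \cite{MacL,Awodey}. Faithfulness is built into the congruence, and essential surjectivity is the observation used above. For fullness, Krull-Schmidt decompositions of $C$ and $C'$ split both $Hom_{\mathcal{C}}(C,C')$ and $Hom_{\mathcal{V}}(H_n(C),H_n(C'))$ as direct sums over pairs of indecomposable summands; only pairs of the form $(J[n], J[n])$ contribute nontrivially on the target side, and on such a pair the induced map $\mathbb{F} \cong Hom_{\mathcal{C}}(J[n],J[n]) \to Hom_{\mathcal{V}}(\mathbb{F},\mathbb{F}) \cong \mathbb{F}$ carries the identity to the identity, hence is surjective. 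The main (mild) obstacle is this fullness verification: a general quotient of a Krull-Schmidt category need not admit such a clean description, and the argument genuinely uses the explicit classification of indecomposables of $\mathcal{C}$ to reduce fullness to a one-dimensional check.
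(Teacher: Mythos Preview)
Your proposal is correct and follows essentially the same approach as the paper, which does not give a formal proof but only the discussion following the proposition (paralleling the argument for Theorem~\ref{alternate}): define $\coimage H_n$ by the congruence, note $\image H_n = \mathcal{V}$, use the Krull--Schmidt classification of indecomposables in $\mathcal{C}$ from Proposition~\ref{comp}, observe that $J[m]$ for $m\neq n$ and all $K[m]$ go to zero while $J[n]$ survives with endomorphism ring $\mathbb{F}$, and conclude equivalence. Your write-up is in fact more detailed than the paper's, particularly in spelling out the fullness check via the decomposition of Hom-spaces over pairs of indecomposable summands.
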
 
In this setting ${\image H_n}$ is just another name for $ {\mathcal V}$, and ${\image H_n} \to {\mathcal V}$ is the 
identity.    
  The interesting part is the quotient functor ${\mathcal C} \to {\coimage H_n}$, 
  where the  quotient category ${\coimage H_n}$ is defined via the following equivalence relation (congruence) on morphisms:  
two morphisms $f$ and $f'$ in ${\mathcal C}$ are equivalent iff the morphisms $H_n(f)$ and $H_n(f')$ in ${\mathcal V}$ are equal. 
By the Krull-Schmidt property, any complex $C$ in ${\mathcal C}$ is isomorphic to a direct sum with appropriate 
multiplicities of the  indecomposable complexes $J[m]$ and $K[m]$ for various  $m$.   
The key property required to encode homology in this framework is:  an 
 indecomposable complex goes to zero under the quotient functor iff it 
goes to zero under the homology functor $H_n$.  Namely, $J[m]$ goes to zero unless $m = n$, and 
$K[m]$ goes to zero for all $m$.  
So working with a complex $C$ in the quotient category ${\coimage H_n}$ amounts to discarding from a decomposition 
of $C$  those indecomposable summands that are {\it a priori} known to go to zero under the homology functor $H_n$.     
Each indecomposable summand that remains is canonically isomorphic to $J[n]$, and the set of 
these isomorphisms contains the data for the usual  ``basis of homology cycles" of the homology vector space $H_n(C)$.

\subsection{Kernels and Cokernels}
\label{subsection5.2}

It is well-known that  the representations of a quiver constitute an Abelian category, see 
for example \cite{Shiffler}.   This means that Abelian categories are 
relevant to persistent homology,  and this has been studied in the paper  \cite{CHEM}.   
We now rapidly review the fundamental constructs in an  Abelian category, 
referring to Freyd's classic \cite{Freyd} or the more modern approach of \cite{HA} for details. 
Recall that an additive category is  {\it pre-Abelian} if any morphism 
$f: X \to Y$ admits a kernel, $\kernel f \to X$,  and a cokernel, $Y \to \cokernel f$, 
each characterized by standard universal properties.   Then the 
 image, $\image f \to Y$, is  defined as the kernel of the cokernel, 
and the coimage, $X \to \coimage f$ as the cokernel of the kernel.  
Any morphism $f: X \to Y$ in a pre-Abelian category  factors uniquely as  (\cite{HA} Lemma 3.12): 
\begin{equation*} X \to {\coimage f}\to {\image f} \to Y. 
\end{equation*}
Finally, a  pre-Abelian category is Abelian iff  ${\coimage f}\to {\image f}$ is always an isomorphism.  
(\cite{HA} Definition 5.1; this is widely known as the ``rank theorem" for   
the Abelian category ${\mathcal V}$ of finite-dimensional vector spaces.)

The standard framework for persistent homology (Section \ref{subsection3.1})  focuses on the 
 category ${\image P_n}$.  
 The category ${\image P_n}$ is Abelian, 
so each morphism $f: X \to Y$ has a kernel, 
 cokernel, image, and coimage.   Furthermore the category ${\image P_n}$ is  Krull-Schmidt, 
 so the objects $\kernel f$, $\cokernel f$, $\image f$, and $\coimage f$ can 
 be decomposed in terms of barcodes.  The paper  \cite{CHEM} presents 
 algorithms for computing the barcode invariants of these objects for the case when $f: X \hookrightarrow Y$ is the inclusion of a subobject, 
 and discusses  the case of general $f$ in terms of mapping cylinders.    

Our alternate framework for persistent homology (Section \ref{subsection3.2}) focuses on the quotient category ${\coimage P_n}$.  
Theorem \ref{alternate} asserts that the  persistent homology functor 
$P_n: {\mathcal F} \to  {\mathcal P}$ 
factors as 
\begin{equation*} {\mathcal F} \to {\coimage P_n}\to {\image P_n} \to {\mathcal P}, 
\end{equation*}
where  the functor ${\coimage P_n}\to {\image P_n}$ is an equivalence 
of categories.  Since ${\image P_n}$ is Abelian, the equivalence immediately implies  that the quotient category ${\coimage P_n}$ is also Abelian.  
In a forthcoming paper \cite{PS}, we  study algorithms for constructing 
$\kernel f \to X$ and $Y \to \cokernel f$ for a general morphism $f: X \to Y$ in the quotient category ${\coimage P_n}$.  
(We note  that the category ${\mathcal F}$ of  filtered complexes is pre-Abelian  
but not Abelian  \cite{HA},  but the quotient functor ${\mathcal F} \to {\coimage P_n}$ does not preserve kernels and cokernels.)


\subsection{Zigzag Persistent Homology}
\label{subsection5.3}

Zigzag persistent homology was introduced in \cite{CdS,CdSM} and further studied in \cite{SV-J,CO,CO2}.
In this section we  apply the  categorical techniques of Chapters \ref{section2} and \ref{section3} to 
the general zigzag case.   A reader who is not interested in the zigzag case may skip this section, and 
continue to the concluding discussion in Section 5.4.  
 Our main result Theorem \ref{alternate} applies to ``ordinary persistent homology" ({\it i.e.} not the zigzag generalization).  
This result is also  informally outlined in the flowchart diagrams of Section \ref{subsection1.1}.   
We  will  show below that this result only partially generalizes to the zigzag case.     
Theorem \ref{alternate} asserts that the  ordinary persistent homology functor factors as 
\begin{equation*} {\mathcal F} \to {\coimage P_n}\to {\image P_n} \to {\mathcal P}, 
\end{equation*}
where each category is Krull-Schmidt.   This assertion generalizes to the zigzag case. 
Theorem \ref{alternate} further asserts that the functor ${\coimage P_n}\to {\image P_n}$ is an equivalence 
of categories.  This assertion does {\it not} generalize to the zigzag case.  
For the general zigzag case, 
the indecomposables of the Krull-Schmidt category ${\image P_n}$ are still classified by 
intervals  as in \cite{CdS}.  But now the classification of 
indecomposables is more complicated for the Krull-Schmidt category ${\coimage P_n}$,  
as illustrated by the example at the end of the section.

We proceed to an  outline of the categorical framework for the general  zigzag case.     
Let ${\mathcal X}$ be a linear Abelian category.   
We describe the sources of the leftward-directed arrows of a zigzag diagram as a  subset $L \subseteq {\mathbb Z}$.  For 
any $L \subseteq {\mathbb Z}$, we define an  
 {\it $L$-persistence object} in ${\mathcal X}$ to be a diagram ${\prescript{}{\bullet} X}$ in the category ${\mathcal X}$ of  type 
\begin{equation*}
\begin{tikzcd}[row sep=tiny, column sep=normal] 
      \cdots  \arrow[r,leftarrow,shift left=0.5ex]  \arrow[r,rightarrow,shift right=0.5ex] & 
      \prescript{}{p-1} X \arrow[r,leftarrow,shift left=0.5ex]  \arrow[r,rightarrow,shift right=0.5ex] & 
       \prescript{}{p} X     \arrow[r,leftarrow,shift left=0.5ex]  \arrow[r,rightarrow,shift right=0.5ex] & 
       \prescript{}{p+1} X \arrow[r,leftarrow,shift left=0.5ex]  \arrow[r,rightarrow,shift right=0.5ex] & 
       \cdots,    \\  
     \end{tikzcd}
\end{equation*}
where the arrow directions are specified by the rule:  $\prescript{}{p-1} X   \longleftarrow \prescript{}{p} X$ if $p \in L$ and 
$\prescript{}{p-1} X   \longrightarrow \prescript{}{p} X$ if $p \notin L$.   
For example,  an $L$-persistence object with  $L = \{ 0, 2, 3 \} \subseteq {\mathbb Z}$ is: 
\begin{equation*}
\begin{tikzcd}[row sep=tiny, column sep=small] 
      \cdots  \arrow[r,rightarrow]   &  \prescript{}{-1} X \arrow[r,leftarrow]  & \prescript{}{0} X
      \arrow[r,rightarrow] &  \prescript{}{1} X  \arrow[r,leftarrow] &  \prescript{}{2} X  \arrow[r,leftarrow] & 
       \prescript{}{3} X \arrow[r,rightarrow] & \cdots  . \\  
       \phantom{p=3}    &  p =-1 & p = 0 & p = 1 &  p = 2 & p = 3\phantom{X}  & \\
     \end{tikzcd}
\end{equation*}
We recover the previous definition of  
 ``ordinary" persistence object from Section  \ref{subsection2.2} by choosing $L = \emptyset$.    
A morphism of $L$-persistence objects  $\prescript{}{\bullet} f: \prescript{}{\bullet} X \to \prescript{}{\bullet} X'$ is a 
commutative diagram of ``ladder"  type:  
\begin{equation*}
\begin{tikzcd}[row sep=normal, column sep=normal] 
\cdots  
\arrow[r,leftarrow,shift left=0.5ex]  \arrow[r,rightarrow,shift right=0.5ex]  & \prescript{}{p-1} X \ar{d} { \prescript{}{p-1} f}  
\arrow[r,leftarrow,shift left=0.5ex]  \arrow[r,rightarrow,shift right=0.5ex] &  \prescript{}{p} X \ar{d}[left]{\prescript{}{p} f}
  \arrow[r,leftarrow,shift left=0.5ex]  \arrow[r,rightarrow,shift right=0.5ex] 
       & \prescript{}{p+1} X  \ar{d}[left]{\prescript{}{p+1} f}   
      \arrow[r,leftarrow,shift left=0.5ex]  \arrow[r,rightarrow,shift right=0.5ex]  & \cdots \\
   \cdots  
  \arrow[r,leftarrow,shift left=0.5ex]  \arrow[r,rightarrow,shift right=0.5ex]  &\prescript{}{p-1} X'  
 \arrow[r,leftarrow,shift left=0.5ex]  \arrow[r,rightarrow,shift right=0.5ex]    &  \prescript{}{p} X'   
      \arrow[r,leftarrow,shift left=0.5ex]  \arrow[r,rightarrow,shift right=0.5ex]   & {\prescript{}{p+1} X'}  
      \arrow[r,leftarrow,shift left=0.5ex]  \arrow[r,rightarrow,shift right=0.5ex]  & \cdots .\\
     \end{tikzcd}
\end{equation*}
Recall that a categorical diagram is {\it tempered} if all but finitely many of its arrows are  iso(morphisms).  
The proof of Proposition \ref{persKS} readily generalizes to:  

\begin{proposition}
\label{zzKS}  Let ${\mathcal X}$ be a linear Abelian category.  Then for any $L \subseteq {\mathbb Z}$, the 
  category of tempered $L$-persistence objects in ${\mathcal X}$  is Krull-Schmidt.  
\end{proposition}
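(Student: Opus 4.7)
The plan is to follow the three-part structure of the proof of Proposition \ref{persKS}, replacing the all-rightward ambient category of persistence objects with the zigzag ambient category of $L$-persistence objects. Namely, I would first verify that $L$-persistence objects in $\mathcal{X}$ form a linear Abelian category with pointwise kernels, cokernels, and direct sums; then identify the tempered $L$-persistence objects as a strictly full Abelian subcategory; and finally invoke Atiyah's Criterion (Theorem \ref{Atiyah}).

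The first two steps should go through essentially unchanged. Pointwise constructions in $\mathcal{X}$ do not interact with the choice of arrow directions encoded by $L$: kernels, cokernels, and finite direct sums of $L$-persistence objects are formed index-by-index while preserving the prescribed zigzag pattern, so the category of $L$-persistence objects is linear and Abelian. Closure of the tempered subcategory under these operations follows because the kernel and cokernel of an isomorphism are zero, so a position at which both source and target $L$-persistence objects are connected by an isomorphism induces an isomorphism in the pointwise kernel or cokernel diagram as well. Closure under direct sums is immediate.

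The key step, and the one I expect to be the only place genuine care is needed, is verifying that the Hom space between two tempered $L$-persistence objects is finite-dimensional over $\mathbb{F}$. The argument from Proposition \ref{persKS} adapts directly: in any commutative square whose horizontal arrows are parallel isomorphisms $\alpha$ and $\beta$ (both pointing left or both pointing right), the vertical morphism at one side determines the vertical morphism at the other by conjugation with $\alpha$ and $\beta$, the only difference between the two orientations being which of $\alpha,\beta$ is inverted. Since only finitely many arrows in a tempered $L$-diagram fail to be isomorphisms, a morphism of tempered $L$-persistence objects is pinned down by finitely many of its components, each living in a finite-dimensional Hom space in $\mathcal{X}$. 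The main obstacle to monitor is simply that the directional switches of the zigzag must not obstruct this propagation, but because every isomorphism can be inverted in either direction, the chain of determinations proceeds unimpeded across the zigzag. With linearity and Abelianness in hand, Atiyah's Criterion then delivers the Krull-Schmidt property.
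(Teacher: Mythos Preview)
Your proposal is correct and follows essentially the same approach as the paper, which simply says ``The proof of Proposition \ref{persKS} readily generalizes'' without spelling out details; you have filled in precisely the generalization the authors intend, including the key observation that the propagation argument for finite-dimensionality of Hom spaces works regardless of arrow orientation because isomorphisms may be inverted. One small expositional wobble: your phrase ``the kernel and cokernel of an isomorphism are zero'' is not quite the fact you need---what matters is that when the horizontal arrows $\alpha,\beta$ in a ladder square are isomorphisms, the induced maps on $\ker f_\bullet$ and $\cokernel f_\bullet$ are isomorphisms (by functoriality of kernels and cokernels)---but your conclusion is correct.
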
 

An ``$L$-filtered object" is the generalization of a filtered object to the zigzag case. 
For any  $L \subseteq {\mathbb Z}$,  
an $L$-filtered object in a concrete linear Abelian category ${\mathcal X}$ is a special type of tempered $L$-persistence object in ${\mathcal X}$.  
Recall that in a concrete linear Abelian category  we denote by  $X \hookrightarrow X'$ the inclusion of a subobject. 
We say a tempered $L$-persistence object $\prescript{}{\bullet} X$  is an   
$L$-{\it filtered object} if it is bounded below and if   every arrow is an inclusion arrow: 
\begin{equation*}
\begin{tikzcd}[row sep=tiny, column sep=normal] 
      \cdots  \arrow[r,hookleftarrow,shift left=0.65ex]  \arrow[r,hookrightarrow,shift right=0.65ex] & 
      \prescript{}{p-1} X \arrow[r,hookleftarrow,shift left=0.65ex]  \arrow[r,hookrightarrow,shift right=0.65ex] & 
       \prescript{}{p} X    \arrow[r,hookleftarrow,shift left=0.65ex]  \arrow[r,hookrightarrow,shift right=0.65ex]  & 
       \prescript{}{p+1} X \arrow[r,hookleftarrow,shift left=0.65ex]  \arrow[r,hookrightarrow,shift right=0.65ex]  & 
       \cdots.    \\  
     \end{tikzcd}
\end{equation*}
For example,  an $L$-filtered object with  $L = \{ 0, 2, 3 \} \subseteq {\mathbb Z}$ is: 
\begin{equation*}
\begin{tikzcd}[row sep=tiny, column sep=small] 
      \cdots  \arrow[r,hookrightarrow]   &  \prescript{}{-1} X \arrow[r,hookleftarrow]  & \prescript{}{0} X
      \arrow[r,hookrightarrow] &  \prescript{}{1} X  \arrow[r,hookleftarrow] &  \prescript{}{2} X  \arrow[r,hookleftarrow] & 
       \prescript{}{3} X \arrow[r,hookrightarrow] & \cdots  . \\  
       \phantom{p=3}    &  p =-1 & p = 0 & p = 1 &  p = 2 & p = 3\phantom{X}  & \\
     \end{tikzcd}
\end{equation*}
We recover the previous definition of  
``ordinary" filtered object from Section  \ref{subsection2.2} by choosing $L = \emptyset$.    
 It is important to note that an $L$-filtered object need not admit a categorical limit and colimit  
in the general  case $L \ne \emptyset$.   Consequently only the first assertion of Lemma \ref{crass} generalizes: 
 \begin{lemma}
\label{lcrass}
 Let ${\mathcal X}$ be a linear Abelian category.   Then for any $L \subseteq {\mathbb Z}$, the category of 
 $L$-filtered objects in ${\mathcal X}$ is Krull-Schmidt.  
\end{lemma}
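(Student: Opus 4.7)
The plan is to derive the Krull-Schmidt property for $L$-filtered objects by transferring it from the ambient Krull-Schmidt category of tempered $L$-persistence objects supplied by Proposition \ref{zzKS}. This parallels the nonconstructive portion of the proof of Lemma \ref{crass}, but sidesteps the colimit argument that breaks down when $L \ne \emptyset$.

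The key step will be to show that any direct summand of an $L$-filtered object, taken in the ambient category of tempered $L$-persistence objects, is isomorphic to an $L$-filtered object. I would argue as follows. Every arrow of an $L$-filtered object is an inclusion and hence monic; in an additive category a direct summand of a monic is again monic, so each arrow of a candidate summand is monic; and in a concrete linear Abelian category a monic arrow can, after replacing by an isomorphic representative, always be taken to be an inclusion. Boundedness below is trivially inherited by summands, so each summand is isomorphic to an $L$-filtered object.

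Once this closure is in hand, both Krull-Schmidt axioms of Definition \ref{ksdefTwo} follow. For the existence of finite decompositions, Proposition \ref{zzKS} decomposes any $L$-filtered object into indecomposables of the ambient category, and the closure rewrites each ambient indecomposable summand as an $L$-filtered object. For locality of endomorphism rings, the closure also forces an $L$-filtered object that is indecomposable within the category of $L$-filtered objects to be indecomposable in the ambient category, since any ambient decomposition would again consist of $L$-filtered objects. Because the $L$-filtered objects form a full subcategory of the tempered $L$-persistence objects, the endomorphism ring of an object is unchanged when passing between the two categories, so locality is inherited from the ambient Krull-Schmidt property.

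The main obstacle I anticipate is a clean justification of the step that a summand of a diagram of inclusion arrows is again, up to isomorphism, a diagram of inclusion arrows. Without a colimit one cannot simply intersect with a summand of the colimit as in the proof of Lemma \ref{crass}. The monic-to-inclusion representation argument sketched above bypasses this difficulty, but it relies on the concreteness of $\mathcal{X}$ and on organizing the simultaneous choice of isomorphic representatives coherently along the whole zigzag indexed by $p \in \mathbb{Z}$; getting this bookkeeping right, and verifying that the two direction-dependent inclusion patterns dictated by $L$ and its complement are both respected, will be the most delicate part of the proof.
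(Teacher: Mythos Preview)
Your proposal is correct and follows essentially the same route the paper intends: the paper offers no explicit proof of Lemma~\ref{lcrass} but simply states that the first assertion of Lemma~\ref{crass} generalizes, and the argument for that assertion is precisely the one you outline---pass to the ambient Krull-Schmidt category of tempered $L$-persistence objects via Proposition~\ref{zzKS}, use fullness of the subcategory, and invoke closure under summands (which the paper also merely asserts for the $L=\emptyset$ case without proof). Your monic-summand argument and the monic-to-inclusion replacement supply exactly the details the paper omits, and the bookkeeping concern you flag is a genuine but routine technicality that neither you nor the paper need to belabor.
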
 
But the classification of filtered objects in terms of colimits is not available for the general zigzag case $L \ne \emptyset$.  

For any $L \subseteq {\mathbb Z}$ we can now introduce $L$-persistent homology,  commonly known as ``zigzag persistent homology."  
 ${\mathcal F}$ now denotes the Krull-Schmidt (by Lemma \ref{lcrass}) category of $L$-filtered complexes, where an object is a diagram of complexes.    
${\mathcal P}$ now denotes the Krull-Schmidt (by Proposition \ref{zzKS}) category of $L$-persistence vector spaces, where an object is a diagram of vector spaces. 
The homology functor $H_n$ takes a diagram of complexes to a diagram of vector spaces, resulting 
in a functor $ P_n:{\mathcal F} \to {\mathcal P}$ which we  call 
 the {\it $L$-persistent homology} of degree $n$.  This is the  functor that is  commonly known as {\it zigzag persistent homology}.    
We recover the ``ordinary" ({\it i.e.} not ``zigzag") persistent homology functor  by choosing $L = \emptyset$.  

We now consider the factorization properties of the $L$-persistent homology functors for an arbitrary subset $L \subseteq {\mathbb Z}$, obtaining 
a partial generalization of Theorem \ref{alternate} for the  ordinary   case where $L = \emptyset$.  
The category $\image  P_n$, comprised of the  
$L$-persistence vector spaces that are bounded below, is a full Abelian subcategory of ${\mathcal P}$  and 
therefore Krull-Schmidt.   
(The category $\image P_n$ does not depend on $n$;  it is always the same subcategory of $ {\mathcal P}$.)  
The well-studied representation theory of  $A_n$ quivers (see e.g. \cite{CdS,Shiffler}) 
still carries over by a  limiting argument to classify the indecomposable $L$-persistence vector spaces in terms of 
 intervals    $I \subseteq {\mathbb Z}$ that are bounded below.   
The category $\coimage P_n$ is   
a categorical quotient of $ {\mathcal F}$, defined  via the following equivalence relation (congruence) on morphisms:  
two morphisms $f$ and $f'$ in ${\mathcal F}$ are equivalent iff the morphisms $P_n(f)$ and $P_n(f')$ in 
${\mathcal P}$ are equal.   For any $L \subseteq {\mathbb Z}$,  the category 
 ${\mathcal F}$ and its quotient $\coimage P_n$ are both Krull-Schmidt.  
But the classification of their indecomposable summands in terms of colimits is not 
available for the general zigzag case $L \ne \emptyset$.  
Consequently for arbitrary $L \subseteq {\mathbb Z}$, we only have the 
following partial generalization of Theorem \ref{alternate}:  
 \begin{theorem}
 \label{lalternate}  For any $L \subseteq {\mathbb Z}$, 
 the $L$-persistent homology functor $P_n:{\mathcal F} \to{\mathcal P}$ factors as 
 \begin{equation*} {\mathcal F} \to {\coimage P_n}\to {\image P_n} \to{\mathcal P}.  
\end{equation*}
 \end{theorem}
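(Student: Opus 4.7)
The plan is to mimic the construction used to prove Theorem \ref{alternate}, but establishing only the weaker factorization assertion rather than the equivalence of the middle functor (which fails in the zigzag case, as noted in the text). The factorization is essentially formal, once one has in hand the Krull-Schmidt properties already established: Lemma \ref{lcrass} for ${\mathcal F}$, Proposition \ref{zzKS} for ${\mathcal P}$, and the standard descent of the Krull-Schmidt property to quotient categories (see \cite{Liu} p.~431) for $\coimage P_n$. Note that $\image P_n$, as the full subcategory of ${\mathcal P}$ consisting of $L$-persistence vector spaces that are bounded below, is Krull-Schmidt by the same subcategory argument as in Section \ref{subsection3.1}.

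First I would introduce the three functors explicitly. The functor ${\mathcal F} \to \coimage P_n$ is the canonical quotient functor, defined as the identity on objects and sending each morphism $f$ to its equivalence class $[f]$ under the congruence $f \sim f'$ iff $P_n(f) = P_n(f')$. The functor $\coimage P_n \to \image P_n$ is then forced: send an object $F$ to the $L$-persistence vector space $P_n(F)$, and send $[f]$ to $P_n(f)$. Well-definedness on morphisms is immediate from the definition of the congruence, and faithfulness is equally immediate. The final arrow $\image P_n \to {\mathcal P}$ is simply the inclusion of the full subcategory. The one verification I would include is that the middle functor actually lands in $\image P_n$, which reduces to checking that $P_n(F)$ is bounded below for any $L$-filtered complex $F$; this follows from the bounded-below condition imposed on $L$-filtered objects together with the fact that $H_n$ takes the zero complex to the zero vector space. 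Composing the three functors yields exactly $P_n$ on both objects and morphisms.

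The main potential pitfall, rather than a genuine obstacle, is the temptation to try to upgrade the middle functor to an equivalence of categories as in the $L = \emptyset$ case of Theorem \ref{alternate}. In the general zigzag setting, the classification of indecomposables in $\coimage P_n$ is genuinely more complicated than in $\image P_n$: because an $L$-filtered object need not admit a categorical colimit when $L \ne \emptyset$, the identification of indecomposable filtered complexes with interval classes via their colimits breaks down, and one cannot mimic the essential surjectivity step. Thus the proof must be carefully scoped to establish only the factorization of functors, stopping short of any claim about the middle functor being full or essentially surjective; the illustrative example at the end of the section is then free to exhibit the discrepancy.
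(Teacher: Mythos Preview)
Your proposal is correct and matches the paper's treatment: the paper does not give a separate proof of Theorem \ref{lalternate} but rather sets up the definitions of $\image P_n$ and $\coimage P_n$ in the paragraphs preceding the statement, treating the factorization itself as formal from those definitions. Your explicit description of the three functors, the well-definedness check for the induced functor on the quotient, and the bounded-below verification are exactly the details the paper leaves implicit.
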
  
\noindent  Recall that for the  ``ordinary" case $L = \emptyset$, we compared  the classification of indecomposables in 
the two categories to  prove that  ${\coimage P_n}\to {\image P_n}$ is 
an equivalence of categories.    
But for the general zigzag case where $L \ne \emptyset$, the following example shows that 
the functor ${\coimage P_n}\to {\image P_n}$ is {\it not}  an equivalence in general: 

\begin{example} 
\label{lexy} We consider $L$-persistent homology with $L =\{0,2,3 \}  \subseteq  {\mathbb Z}$.   
Start with the  $L$-filtered simplicial complex  shown in Figure \ref{fig:exp1}.  
%
%
\begin{figure*}
  \includegraphics[width=\textwidth]{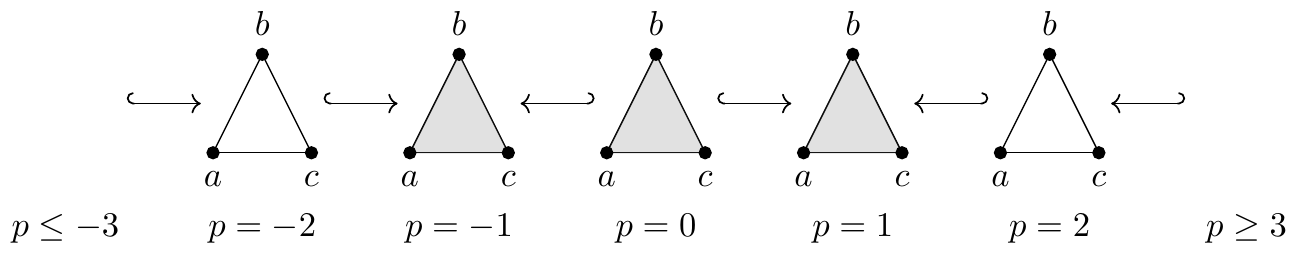}
\caption{An $L$-filtered Simplicial Complex with $L =\{0,2,3 \}  \subseteq  {\mathbb Z}$. }
\label{fig:exp1}       
\end{figure*}
In the quotient category  ${\coimage P_1}$ this becomes the (indecomposable) object 
\begin{equation*}
\begin{tikzcd}[row sep=tiny, column sep=small] 
      0  \arrow[r,hookrightarrow]   & J[1] \arrow[r,hookrightarrow]  &K[1]
      \arrow[r,hookleftarrow] &  K[1]  \arrow[r,hookrightarrow] & K[1]  \arrow[r,hookleftarrow] & 
       J[1] \arrow[r,hookleftarrow] & 0  . \\  
         p\le -3  &  p =-2 & p = -1 & p = 0 &  p = 1& p = 2  & p\ge 3 \\
     \end{tikzcd}
\end{equation*}
Compare with the $L$-filtered simplicial complex shown in Figure \ref{fig:exp2}. 
%
%
\begin{figure*}
  \includegraphics[width=\textwidth]{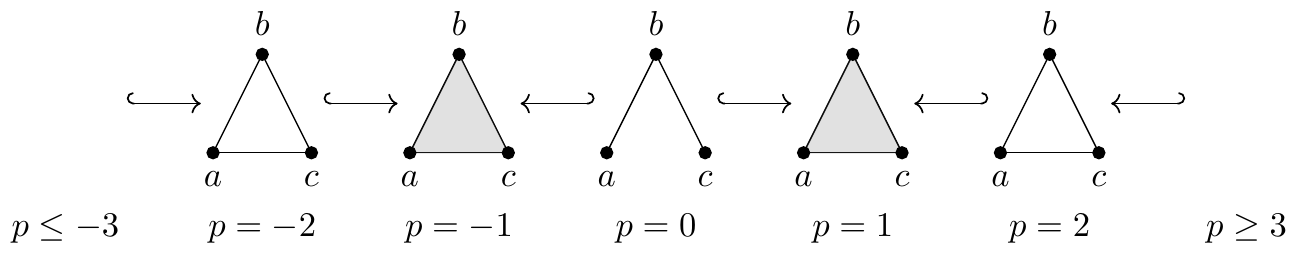}
\caption{Another $L$-filtered Simplicial Complex  with $L = \{ 0,2,3 \} \subseteq  {\mathbb Z}$. }
\label{fig:exp2}       
\end{figure*}
In the quotient category  ${\coimage P_1}$ this becomes  the (decomposable) object
\begin{equation*}
\begin{tikzcd}[row sep=tiny, column sep=small] 
      0  \arrow[r,hookrightarrow]   & J[1] \arrow[r,hookrightarrow]  &K[1]
      \arrow[r,hookleftarrow] &  0 \arrow[r,hookrightarrow] & K[1]  \arrow[r,hookleftarrow] & 
       J[1] \arrow[r,hookleftarrow] & 0  . \\  
         p\le -3  &  p =-2 & p = -1 & p = 0 &  p = 1& p = 2  & p\ge 3 \\
     \end{tikzcd}
\end{equation*}
This pair of objects is not  isomorphic in the quotient category  ${\coimage P_1}$.  
But these non-isomorphic objects become isomorphic in ${\image P_n}$,  since 
both go to the same (decomposable) object 
\begin{equation*}
\begin{tikzcd}[row sep=tiny, column sep=small] 
      0  \arrow[r,rightarrow]   &  {\mathbb F} \arrow[r,rightarrow]  & 0
      \arrow[r,leftarrow] &  0  \arrow[r,rightarrow] & 0 \arrow[r,leftarrow] & 
       {\mathbb F}  \arrow[r,leftarrow] & 0  . \\  
         p\le -3  &  p =-2 & p = -1 & p = 0 &  p = 1& p = 2  & p\ge 3 \\
     \end{tikzcd}
\end{equation*}
It follows that the functor  ${\coimage P_n}\to {\image P_n}$ cannot be an 
equivalence of categories; 
an equivalence would not take a non-isomorphic pair to an isomorphic pair.    (Furthermore, an equivalence 
 would not take an indecomposable object to a decomposable object.)  
\end{example}


\subsection{What is the Best  Framework for Persistent Homology?}
\label{subsection5.4}

To conclude,  we make a few general remarks about comparing  categorical frameworks.    
A functor $F: {\mathcal A} \to {\mathcal B}$ transforms objects and morphisms in a category ${\mathcal A}$ 
to objects and morphisms  in another category ${\mathcal B}$.   
The usefulness of a functor for studying objects and morphisms in the category ${\mathcal A}$ 
depends on various criteria for the category ${\mathcal B}$.   
 Such criteria are discussed in many Algebraic Topology textbooks, for example  
\cite{Hatcher}, in the context of the ``plain" homology functors $H_n$.   Here we briefly consider some key  
criteria in the context of the persistent homology functors $P_n$, including the zigzag case of 
 Appendix \ref{subsection5.3}.

A functor $F: {\mathcal A} \to {\mathcal B}$ may be useful if the category ${\mathcal B}$ has additional structure.   
We will call this the {\it structural criterion} for the category ${\mathcal B}$.  
 Krull-Schmidt categories and Abelian categories are relevant examples of categories with additional structure.  
A categorical structure tends to be useful in applications if it is amenable to algorithmic computation.  
The most important applications of persistent homology are based on algorithmic decompositions of objects in various Krull-Schmidt categories.  
As we have shown,   the algorithms for  persistent homology  actually compute a  Krull-Schmidt decomposition of a filtered complex in ${\mathcal F}$.   
In the standard framework (Section \ref{subsection3.1}) the functor ${\mathcal F} \to {\image P_n}$ takes this decomposition 
of a filtered complex in ${\mathcal F}$ 
to a decomposition of a persistence vector space in the category ${\image P_n}$ of persistence vector spaces.  
In our alternate framework (Section \ref{subsection3.2}) the functor ${\mathcal F} \to {\coimage P_n}$ takes this decomposition 
of a filtered complex in ${\mathcal F}$ to a decomposition of 
a filtered complex in the quotient category ${\coimage P_n}$.  
For ``ordinary" persistent homology we have the equivalence of categories ${\coimage P_n} \to {\image P_n}$ (Theorem \ref{alternate}),  
so  the alternate framework and the standard framework perform equally well on the structural criterion.  

But in the general zigzag case the categories ${\coimage P_n}$ and ${\image P_n}$ are {\it not} 
equivalent (Appendix \ref{subsection5.3}).   
Decomposition algorithms in the Krull-Schmidt category ${\image P_n}$ are known, and 
furthermore this category is known to be Abelian.   So the standard 
framework ${\mathcal F} \to {\image P_n}$ performs well on the structural criterion.  
But decomposition algorithms in the more complicated Krull-Schmidt category ${\coimage P_n}$ do not appear to be known at present, and furthermore it is not clear whether this category is Abelian.   So at present 
a putative alternate framework  ${\mathcal F} \to {\coimage P_n}$ for the general zigzag case does rather badly on the 
structural criterion.

An important countervailing consideration is whether the functor $F: {\mathcal A} \to {\mathcal B}$ loses, or forgets, too much 
of the information present in the original category ${\mathcal A}$.    
For example, the functor may be losing too much information if it takes
 non-isomorphic objects in ${\mathcal A}$ to isomorphic objects in ${\mathcal B}$ (as we saw in Example \ref{lexy} for 
a zigzag case of  the functor ${\coimage P_n}\to {\image P_n}$). 
 Typically we work with concrete categories, where 
objects are represented as sets with additional features, such as algebraic or topological features.     
Then we would like  the sets representing objects of ${\mathcal B}$ 
to retain features of the sets representing objects of ${\mathcal A}$.  
We will call this the {\it representational criterion} for the category ${\mathcal B}$.  
Our alternate framework for persistent homology (Section \ref{subsection3.2}) is based on the 
quotient functor  ${\mathcal F} \to {\coimage P_n}$.    Our  alternate framework performs very well on 
the representational criterion, because a filtered complex in ${\mathcal F}$ goes to the very same  
filtered complex in the quotient category ${\coimage P_n}$.   
The standard framework (Section \ref{subsection3.1})  is based on the 
 functor  ${\mathcal F} \to {\image P_n}$.    The standard framework does not perform  well 
 on the representational criterion, because a persistence vector space in ${\image P_n}$ does not 
 retain  the algebraic features of a filtered complex in ${\mathcal F}$.  
 These arguments carry over to the general zigzag case, where a putative alternate 
 framework would also perform better on the representational criterion.

In conclusion, we argue that our alternate framework for persistent homology is better than the standard framework based on the representational criterion and structural criterion described here, in the setting of ``ordinary" persistent homology.


\appendix\normalsize

\section{Bruhat Uniqueness Lemma}
\label{appendixA}

Here we establish the uniqueness of the persistence canonical form $\underline{D}$  appearing  in 
the Matrix Structural Theorem \ref{postnon}, as well as in the ungraded version Theorem \ref{prenon}.  
Our result generalizes the  uniqueness statement for  the usual Bruhat factorization of 
 an invertible matrix \cite{AB,Geck}.   
 
It is convenient to make the following definitions.  
 We call an (upper) triangular matrix  $U$ \emph{unitriangular} if it  is unipotent, meaning that  each diagonal entry is $1$. 
We call a matrix $M$ \emph{quasi-monomial} if each row has at most one nonzero entry and each column has at most one nonzero entry. 
We remark that a unitriangular matrix is always square, but a quasi-monomial matrix need not be square.   
The key to proving uniqueness  is:     

\begin{lemma} 
\label{firstA}
Suppose  $M_1\,U = V \,M_2  $, where  $M_1$ and $M_2$ are quasi-monomial and $U$ and $V$ are unitriangular.
Then $M_2=M_1$.
\end{lemma}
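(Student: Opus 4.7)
The plan is to compare $M_1$ and $M_2$ indirectly, by locating a rank-based invariant of the defining equation that is preserved under unitriangular multiplication on either side and that completely determines a quasi-monomial matrix.  For a matrix $A$, let $\rho_{i,j}(A)$ denote the rank of the submatrix formed by the rows of index at least $i$ and the columns of index at most $j$.  First I would verify that $\rho_{i,j}$ is invariant under left multiplication by a unitriangular matrix and under right multiplication by a unitriangular matrix.  Indeed, if $V$ is unitriangular then row $i$ of $VM$ is an invertible unit-triangular combination of rows $i, i+1, \dots$ of $M$, so the row span of the trailing rows is unchanged; restricting to columns $\le j$ gives $\rho_{i,j}(VM) = \rho_{i,j}(M)$.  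A mirror argument on columns yields $\rho_{i,j}(MU) = \rho_{i,j}(M)$.  Applied to the hypothesis $M_1 U = V M_2$, this gives $\rho_{i,j}(M_1) = \rho_{i,j}(M_2)$ for every $i, j$.

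Next I would observe that a quasi-monomial matrix is rigidly determined, as to its support, by its $\rho$-function.  Because its nonzero entries lie in distinct rows and distinct columns, the nonzero entries of any submatrix are automatically linearly independent, and so $\rho_{i,j}(M)$ simply counts the nonzero positions $(i',j')$ of $M$ with $i' \ge i$ and $j' \le j$.  A two-dimensional finite difference then recovers the indicator of a nonzero entry at $(i,j)$:
\begin{equation*}
\mathbf{1}[\, (M)_{ij} \neq 0 \,] \;=\; \rho_{i,j}(M) - \rho_{i+1,j}(M) - \rho_{i,j-1}(M) + \rho_{i+1,j-1}(M).
\end{equation*}
Combined with the invariance above, this forces the supports of $M_1$ and $M_2$ to coincide as partial matchings.

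Finally I would pin down the actual entries.  Fix any position $(i,j)$ in this common support.  Quasi-monomiality of $M_1$ confines the only nonzero entry of its row $i$ to column $j$, and since $U$ has $1$'s on the diagonal, $(M_1 U)_{ij} = (M_1)_{ij} \cdot U_{jj} = (M_1)_{ij}$.  Symmetrically, the only nonzero entry of $M_2$ in its column $j$ sits at row $i$, and since $V$ has $1$'s on the diagonal, $(V M_2)_{ij} = V_{ii} \cdot (M_2)_{ij} = (M_2)_{ij}$.  The hypothesis $M_1 U = V M_2$ therefore forces $(M_1)_{ij} = (M_2)_{ij}$, and $M_1 = M_2$ follows.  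The only real obstacle is pinpointing the right invariant at the outset; the function $\rho_{i,j}$ is engineered precisely so that the unitriangular factors act trivially on it, after which both the support comparison and the entrywise comparison are essentially immediate.
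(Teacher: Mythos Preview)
Your proof is correct and takes a genuinely different route from the paper's argument.  The paper proceeds by a direct pivot-chasing contradiction: it defines row-pivots (leftmost nonzero in a row) and column-pivots (bottommost nonzero in a column), observes that unitriangular multiplication preserves these, and then argues that if some column-pivot of the common matrix $S = M_1U = VM_2$ failed to be a row-pivot, one could find another such failure strictly to its left, contradicting minimality.  Your approach instead packages the whole combinatorics into the corner-rank function $\rho_{i,j}$, which is the classical invariant underlying the Bruhat decomposition of $\mathrm{GL}_n$.  This has two advantages: the invariance under unitriangular multiplication is a one-line row/column-span argument rather than a case analysis, and the inclusion--exclusion identity recovers the support of a quasi-monomial matrix in one stroke, with no need for the paper's two symmetric halves.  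The paper's argument, by contrast, is entirely elementary and self-contained, requiring no appeal to rank or linear independence---only the definitions of pivot and quasi-monomial---which may be preferable in an expository setting aimed at readers unfamiliar with Bruhat-type rank conditions.  Both arguments isolate the same phenomenon (unitriangular multiplication preserves ``lower-left corner'' information), but yours names it explicitly while the paper's extracts it implicitly through the extremal pivot.
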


\noindent In the following proof, the term  \emph{row-pivot}  denotes a matrix entry that 
is the leftmost nonzero entry in its row, and  \emph{column-pivot} denotes a matrix 
entry that is the bottommost nonzero entry in its column.

\begin{proof}
The first half of the proof consists of showing that every nonzero entry of $M_2$ is also an entry of $M_1$.      
A nonzero entry of the quasi-monomial matrix $M_2$ is a column-pivot. 
Similarly a nonzero entry of the quasi-monomial matrix $M_1$ is a row-pivot. 
It now suffices to show that a column-pivot of $M_2$
is a row-pivot of $M_1$.
Since $V$ is unitriangular, $VM_2$ has the same column-pivots as $M_2$.  
Similarly  since $U$ is unitriangular,  $M_1U$ has the same row-pivots as $M_1$. 
It now suffices to prove that a column-pivot of $S = VM_2$ is a  row-pivot of $S =M_1U$.  
Suppose to the contrary that some column-pivot of $S$  is not a row-pivot of $S$.   
Let $x$ be the leftmost such column-pivot.     Since $x$ is not a row-pivot, there exists a row-pivot $y$ to the left of $x$ in the same row. 
If $y$ were a column-pivot of $S = VM_2$,  then it would be a column-pivot of $M_2$.
But the quasi-monomial matrix $M_2$ cannot have two nonzero entries $y$ and $x$ in the same row.
So $y$ is not a column-pivot of $S$, and there exists a column-pivot $z$ below $y$ in the same column. 
If $z$ were a row-pivot of $S = UM_1$, then it would be a row-pivot of $M_1$. 
But the quasi-monomial matrix $M_1$ cannot have two nonzero entries $z$ and $y$ in the same column. 
So $z$ is a column-pivot of $S$ that  is not a row-pivot of $S$, and $z$ is to the left of (and below) $x$.
This is a contradiction, because $x$ is the leftmost such column-pivot.   

The second half of the proof consists of  showing that every nonzero entry of $M_1$ is also an entry of $M_2$.  
This is analogous to the first half, and we omit the details.   
The two matrices then have the same nonzero entries, so they must also have the same zero entries.
Since all the entries of the two matrices are the same, we have proved  $M_2=M_1$. 
\qed \end{proof} 

Recall that a matrix $M$ is \emph{Boolean} if every non-zero entry is $1$.    
An almost-Jordan differential matrix ${\underline D}$ is  Boolean and quasi-monomial.    

\begin{proposition}
\label{cobb}
Suppose $P_1A=BP_2$ where $P_1$ and $P_2$ are Boolean quasi-monomial and $A$ and $B$ are invertible triangular. Then $P_2=P_1$.
\end{proposition}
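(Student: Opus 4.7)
The plan is to reduce Proposition \ref{cobb} to Lemma \ref{firstA} by peeling off the diagonal parts of $A$ and $B$. Since $A$ is invertible and upper-triangular, its diagonal entries are all nonzero, so we may factor $A = D_A U$ where $D_A$ is the diagonal matrix of diagonal entries of $A$ and $U = D_A^{-1} A$ is unitriangular. Symmetrically, factor $B = V D_B$ with $D_B$ the diagonal matrix of diagonal entries of $B$ and $V = B D_B^{-1}$ unitriangular.

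Substituting these factorizations into the hypothesis $P_1 A = B P_2$ yields
\begin{equation*}
(P_1 D_A)\, U = V\, (D_B P_2).
\end{equation*}
Set $M_1 = P_1 D_A$ and $M_2 = D_B P_2$. Multiplying a Boolean quasi-monomial matrix on the right by an invertible diagonal matrix scales each column by a nonzero scalar, preserving the zero pattern and hence the quasi-monomial property; likewise multiplication on the left by an invertible diagonal scales rows. Thus $M_1$ and $M_2$ are quasi-monomial, $U$ and $V$ are unitriangular, and the rewritten equation $M_1 U = V M_2$ satisfies the hypotheses of Lemma \ref{firstA}.

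Applying Lemma \ref{firstA} gives $M_1 = M_2$, that is, $P_1 D_A = D_B P_2$. Reading this entry-wise, $(P_1)_{ij}\,(D_A)_{jj} = (D_B)_{ii}\,(P_2)_{ij}$; since the diagonal entries $(D_A)_{jj}$ and $(D_B)_{ii}$ are all nonzero, $(P_1)_{ij}$ vanishes if and only if $(P_2)_{ij}$ vanishes. So $P_1$ and $P_2$ have the same zero/nonzero pattern, and because both matrices are Boolean this forces $P_2 = P_1$ entry by entry.

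Essentially no obstacle is expected here, since the combinatorial content has already been carried out in Lemma \ref{firstA}; the work of this proposition is purely bookkeeping. The one point worth handling carefully is confirming that the two diagonal-unitriangular factorizations are placed on the correct sides so that, after regrouping, the \emph{triangular} factors remain unitriangular and the \emph{diagonal} factors absorb into $P_1$ and $P_2$ respectively, leaving an equation of exactly the shape required by Lemma \ref{firstA}.
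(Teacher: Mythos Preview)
Your proof is correct and follows essentially the same approach as the paper: factor $A$ and $B$ as diagonal times unitriangular (on the appropriate sides), absorb the diagonals into $P_1$ and $P_2$ to obtain quasi-monomial matrices, apply Lemma \ref{firstA}, and then use the Boolean hypothesis to strip the diagonals back off. Your entry-wise justification of the final step is a bit more explicit than the paper's, but the argument is the same.
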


\begin{proof}  
Factor $A = T_1U$ as the product of an invertible diagonal  matrix $T_1$ and a unitriangular matrix $U$.     
Factor $B = VT_2$ as the product of a unitriangular matrix  $V$ and an invertible diagonal matrix $T_2$.  
Then $(P_1 T_1)U = V(T_2 P_2)$,  with $(P_1 T_1)$ and $(T_2 P_2)$  quasi-monomial.     
 Lemma \ref{firstA} then gives  the $P_1 T_1 =T_2 P_2$.   
Since the quasi-monomial matrices 
$P_1$ and $P_2$ are Boolean, the conclusion follows.     
\qed \end{proof}

\noindent We remark that any permutation matrix $P$ is   Boolean and quasi-monomial, so     
Proposition \ref{cobb}  generalizes the standard uniqueness result for Bruhat factorization of an 
invertible matrix \cite{AB,Geck}. 

The uniqueness of the persistence canonical form $\underline D$ appearing in 
Theorem \ref{prenon} and in the Matrix Structural Theorem \ref{postnon} now follows easily: 

\begin{corollary}
\label{eunuch}
Suppose $D$ is a differential matrix and $B_1$ and $B_2$ are invertible triangular matrices.  
If both differential matrices ${\underline D}_1 = B_1^{-1} D B_1$ and ${\underline D_2} = B_2^{-1} D B_2$ are almost-Jordan, then    
  ${\underline D}_2 ={\underline D}_1$.  
\end{corollary}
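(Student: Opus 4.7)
The plan is to reduce Corollary~\ref{eunuch} to Proposition~\ref{cobb} by a direct conjugation. Starting from the two hypotheses
\[
D \;=\; B_1\,\underline D_1\, B_1^{-1} \;=\; B_2\,\underline D_2\, B_2^{-1},
\]
I would multiply on the left by $B_2^{-1}$ and on the right by $B_1$ to obtain
\[
C\,\underline D_1 \;=\; \underline D_2\, C, \qquad C := B_2^{-1} B_1,
\]
where $C$ is invertible triangular because the invertible triangular matrices form a group under multiplication.

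Next, I would observe that each almost-Jordan differential matrix $\underline D_i$ is automatically Boolean and quasi-monomial. By the definition in Section~\ref{subsection1.2}, $\underline D_i$ is conjugate via some permutation matrix to a Jordan matrix, that is, to a block-diagonal matrix assembled from copies of the $1\times 1$ block $J$ and the $2\times 2$ block $K$. Such a block-diagonal matrix has at most one nonzero entry in any row or column, and that entry, if present, is equal to $1$; both properties are preserved under conjugation by a permutation, which only relabels rows and columns.

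With these two ingredients in hand, the identity $C\,\underline D_1 = \underline D_2\, C$ has exactly the form $P_1 A = B\,P_2$ required by Proposition~\ref{cobb}, taking $P_1 := \underline D_2$, $P_2 := \underline D_1$, and $A := B := C$. Applying the proposition yields $P_2 = P_1$, i.e. $\underline D_1 = \underline D_2$, completing the proof. There is no real obstacle here: all of the substantive combinatorial content has been absorbed into Proposition~\ref{cobb} (and its underlying pivot-chasing Lemma~\ref{firstA}), and the corollary is essentially a repackaging of the two triangular conjugations of $D$ into the shape to which that proposition applies.
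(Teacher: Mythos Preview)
Your proof is correct and essentially identical to the paper's: the paper writes $\underline D_1(B_1^{-1}B_2)=(B_1^{-1}B_2)\underline D_2$ and invokes Proposition~\ref{cobb}, while you form $C=B_2^{-1}B_1$ instead and apply the proposition with the roles of $P_1,P_2$ reversed. Your added justification that almost-Jordan differential matrices are Boolean quasi-monomial is a helpful clarification the paper states without elaboration.
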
 

\begin{proof}
 ${\underline D_1} (B_1^{-1} B_2) = (B_1^{-1} B_2) {\underline D_2}$,  and 
 the result follows from Proposition \ref{cobb}. 
\qed \end{proof}

\section{Constructively Proving  the Matrix Structural Theorem}
\label{appendixB}

\subsection{Linear Algebra of Reduction} 
\label{apendixB.1}
In this section we discuss  {column-reduction} of a matrix $M:{\mathbb F}^m \to {\mathbb F}^n$, including 
its application to describing 
the kernel and image of the matrix.   
Column-reduction of a differential matrix $D$ is a standard tool in the computation of persistent homology, 
where it is usually just called {\it reduction} \cite{ELZ,ZC1,ZC2,CSEM}.  
We prefer the more precise terminology in order to maintain the distinction with   row-reduction, since 
both are used for Bruhat factorization \cite{AB,Geck,Lusztig}.

As in Appendix A, the term  \emph{column-pivot} denotes a matrix 
entry that is the bottommost nonzero entry in its column. 
A matrix $R$ is said to be \emph{column-reduced}  if each 
row has at most one column-pivot.  

\begin{definition} 
\label{redu}
A column-reduction of a matrix $M$ is an invertible triangular matrix $V$ such that 
$R = MV$ is column-reduced.  
\end{definition} 

\noindent  A column-reduction $V$ exists for any matrix $M$, but is not unique in general.  
Column-reduction algorithms used for persistent homology \cite{ELZ,ZC1,ZC2,DMV} 
usually prioritize computational efficiency.   
For our computational examples, we will use a column-reduction algorithm that is popular for Bruhat factorization \cite{AB,Geck}. 
This algorithm is easy to implement,  but is not very efficient computationally.    
The algorithm starts at the leftmost column of $M$ and proceeds rightward 
by successive columns as follows: 
\begin{itemize}[leftmargin=*]
\item	If the current column is zero, do nothing.
\item	If the current column is nonzero, add an appropriate multiple of the current column to each column  
to the right in order to zero the entries to the right of the column-pivot (in the same row).  
\end{itemize}
\noindent Stop if the current column is the rightmost column, otherwise proceed to the column immediately to the right and repeat.  
By design, the resulting matrix $R$ has the property that any column-pivot has only zeros to the right of it (in the same row). 
So a row of $R$ cannot contain more than one column-pivot, implying that $R$ is column-reduced.  
The invertible triangular column-reduction matrix $V$ is constructed by performing the same column operations on 
the identity matrix $I$, where $I$ has  same number of columns as $M$.   

We  briefly discuss some linear-algebraic properties of column-reduction. 
A column-reduction easily 
yields a basis for the kernel of a matrix as well as a basis for the image. 
By contrast, Gaussian elimination easily yields a basis for the image a matrix, but 
requires  additional  back-substitution to produce a basis for the kernel.    
Column-reduction algorithms are  therefore a  convenient alternative to Gaussian elimination for matrix computations in general, and 
this fact seems to be underappreciated.   
We  use a variant of the usual adapted basis for a filtered vector space, disregarding the ordering of basis elements.   
We'll say that a basis of a finite-dimensional vector space $X$ is {\it almost-adapted} to a subspace $Y \subseteq X$ if 
$Y$ is spanned by the set of basis elements that are contained in $Y$.   Proposition \ref{redu} yields:  

\begin{corollary} 
\label{redpro}
 Let $V:{\mathbb F}^m \to {\mathbb F}^m$ be a column-reduction of a 
 matrix  $M:{\mathbb F}^m \to {\mathbb F}^n$.  
Then: 
\begin{enumerate}[leftmargin=*]
\item The nonzero columns of the column-reduced matrix $R = MV$ are a basis of $\image M$.  
\item The columns of the invertible triangular matrix $V$ are a basis of ${\mathbb F}^m$, and 
this basis  is almost-adapted to $\ker M$.  
\end{enumerate}
\end{corollary}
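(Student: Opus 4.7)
The plan is to derive both statements directly from the defining property of column-reduction, namely that each row of $R = MV$ contains at most one column-pivot, together with the invertibility of $V$.

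For part (1), first observe that because $V$ is invertible, $\image M = \image R$, and the nonzero columns of $R$ clearly span $\image R$. The key step is to verify their linear independence. Let $c_{j_1}, \ldots, c_{j_k}$ denote the nonzero columns of $R$, and let $r_i$ be the row index of the column-pivot of $c_{j_i}$. The column-reduced hypothesis forces the $r_i$ to be distinct (otherwise some row would contain two column-pivots), so I reorder the columns so that $r_1 > r_2 > \cdots > r_k$. In any linear relation $\sum_i \alpha_i c_{j_i} = 0$, examine the entry in row $r_1$. Since the column-pivot of $c_{j_i}$ is by definition the bottommost nonzero entry of $c_{j_i}$, each $c_{j_i}$ with $i \geq 2$ has zero entry at every row strictly below $r_i$, and in particular at row $r_1 > r_i$. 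So only the $c_{j_1}$ term contributes at row $r_1$, forcing $\alpha_1 = 0$; the remaining coefficients vanish by induction on $k$.

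For part (2), let $v_1, \ldots, v_m$ denote the columns of $V$. These form a basis of ${\mathbb F}^m$ because $V$ is invertible. The $j$-th column of $R = MV$ is precisely $Mv_j$, so $v_j \in \kernel M$ if and only if the $j$-th column of $R$ vanishes. To show the basis is almost-adapted to $\kernel M$, take any $w \in \kernel M$, expand $w = \sum_j \beta_j v_j$ uniquely, and apply $M$ to obtain $0 = \sum_j \beta_j (MV)_j$. By the linear independence of the nonzero columns of $R$ established in part (1), each $\beta_j$ with $(MV)_j \ne 0$ must vanish. Hence $w$ is a linear combination of those $v_j$ for which $(MV)_j = 0$, i.e., of those $v_j$ lying in $\kernel M$, as required.

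The mildly delicate step is the linear independence argument in part (1): one must use the ``bottommost nonzero entry'' property not only to identify the pivots but also to conclude that the \emph{other} nonzero columns have zero entries below their own pivots, which is what lets a single column survive at row $r_1$. Once this is set up, part (2) follows mechanically from part (1) and the identity $Mv_j = (MV)_j$.
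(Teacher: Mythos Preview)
Your proof is correct and follows the same approach as the paper's, which is extremely terse: it simply asserts that the nonzero columns of $R$ are linearly independent ``because $R$ is column-reduced'' and that the basis is almost-adapted ``because the nonzero columns of $R = MV$ are linearly independent.'' You have supplied exactly the details the paper leaves implicit, namely the pivot-row argument for independence in part~(1) and the expansion argument for part~(2).
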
 

\begin{proof} \hfil
\begin{enumerate}[leftmargin=*]
\item The nonzero columns of $R$ span $\image M$. 
The  nonzero columns  of $R$ are 
 linearly independent because  $R$ is column-reduced.  
\item  The columns of $V$ are  a basis of ${\mathbb F}^m$ because $V$ is invertible.  
This basis is almost-adapted to $\ker M$ because the nonzero columns of $R = MV$ are linearly independent.   
\qed
\end{enumerate}
 \end{proof}

\begin{example} We  compute in detail a column-reduction of the matrix $M: {\mathbb Q}^4 \to {\mathbb Q}^3$, 
which is presented below with a column augmentation by the identity matrix $I$.

\begin{equation*}
{M \over I} = 
\begin{tikzpicture}[baseline={([yshift=0.8ex]current bounding box.center)}]

[decoration=brace]
\tikzset{
    node style ge/.style={circle,minimum size=.75cm},
}
\pgfdeclarelayer{background}
\pgfdeclarelayer{foreground}
\pgfsetlayers{background,main,foreground}
\matrix (A) [matrix of math nodes,
             nodes = {node style ge},
             left delimiter  = {[}, 
             right delimiter = {]}, 
              inner sep=-2pt,
		row sep=-.3cm,
		column sep=-.25cm
             ]
{
 1 & -2 & 0 & -8 \\
 2 & -4 & 6 & 2 \\
 1 & -2 & 2 & -2 \\
 1 & 0 & 0 & 0 \\
 0 & 1 & 0 & 0 \\
 0 & 0 & 1 & 0 \\
 0 & 0 & 0 & 1 \\
};

\node (leftmid) at ($(A-3-1.south west)!0.5!(A-4-1.north west)-(.15,0)$) {};
\node (rightmid) at ($(A-3-4.south east)!0.5!(A-4-4.north east)+(.15,0)$) {};

\draw (leftmid) -- (rightmid);

\end{tikzpicture}
\mapsto
\begin{tikzpicture}[baseline={([yshift=0.8ex]current bounding box.center)}]

[decoration=brace]
\tikzset{
    node style ge/.style={circle,minimum size=.75cm},
}
\pgfdeclarelayer{background}
\pgfdeclarelayer{foreground}
\pgfsetlayers{background,main,foreground}
\matrix (A) [matrix of math nodes,
             nodes = {node style ge},
             left delimiter  = {[}, 
             right delimiter = {]}, 
	 inner sep=-2pt,
		row sep=-.3cm,
		column sep=-.25cm
             ]
{
 1 & 0 & -2 & -6 \\
 2 & 0 & 2 & 6 \\
{\mathbf 1} & 0 & 0 & 0 \\
 1 & 2 & -2 & 2 \\
 0 & 1 & 0 & 0 \\
 0 & 0 & 1 & 0 \\
 0 & 0 & 0 & 1 \\
};

\node (leftmid) at ($(A-3-1.south west)!0.5!(A-4-1.north west)-(.15,0)$) {};
\node (rightmid) at ($(A-3-4.south east)!0.5!(A-4-4.north east)+(.15,0)$) {};

\draw (leftmid) -- (rightmid);

\end{tikzpicture}
\mapsto
\begin{tikzpicture}[baseline={([yshift=0.8ex]current bounding box.center)}]

[decoration=brace]
\tikzset{
    node style ge/.style={circle,minimum size=.75cm},
}
\pgfdeclarelayer{background}
\pgfdeclarelayer{foreground}
\pgfsetlayers{background,main,foreground}
\matrix (A) [matrix of math nodes,
             nodes = {node style ge},
             left delimiter  = {[}, 
             right delimiter = {]}, 
            inner sep=-2pt,
		row sep=-.3cm,
		column sep=-.25cm
             ]
{
  1 & 0 & -2 & 0 \\
 2 & 0 & { \mathbf 2} & 0 \\
{\mathbf 1} & 0 & 0 & 0 \\
 1 & 2 & -2 & 8 \\
 0 & 1 & 0 & 0 \\
 0 & 0 & 1 & -3 \\
 0 & 0 & 0 & 1 \\
};

\node (leftmid) at ($(A-3-1.south west)!0.5!(A-4-1.north west)-(.15,0)$) {};
\node (rightmid) at ($(A-3-4.south east)!0.5!(A-4-4.north east)+(.15,0)$) {};

\draw (leftmid) -- (rightmid);

\end{tikzpicture}
= {R \over V}
\end{equation*} 

\noindent  The result of the computation is a factorization $R = MV$, where 
$R$ is column-reduced and $V$ is invertible triangular (and unipotent).  We describe each 
step of the computation:    
\begin{enumerate}[leftmargin=*]
\item 
The first column of $M$ is nonzero, so it has a column-pivot. 
At the next processing step, boldface the column-pivot for clarity, and 
add an appropriate multiple of the first column to each column  
to the right in order to zero the entries to the right of the column-pivot (in the same row).   
\item At this point the second column is  zero, so requires no processing step.
\item At this point the third column is  nonzero, so it has a column-pivot.  
At the next processing step, 
 boldface the column pivot, and 
add an appropriate multiple of the third column to each column  
to the right in order to zero the entries to the right of the column-pivot (in the same row). 
\item At this point the fourth column is  zero, so requires no processing step.
\end{enumerate}

\noindent  Columns 1 and 3 of $R$ are the nonzero columns, so they are a basis of $\image M \subseteq {\mathbb Q}^3$.
The four columns of $V$ are a basis of ${\mathbb Q}^4$ that is amost-adapted to $\ker M$.  
 Columns 2 and 4 of $V$ correspond to the zero columns of $R$, so they  are a basis of $\ker M\subseteq {\mathbb Q}^4$.    
\end{example}

\subsection{Matrix Structural Theorem via Reduction}

\label{appendixB.2}
The standard algorithm of persistent homology  \cite{ELZ,ZC1,ZC2} starts with a 
differential matrix $D$ and 
constructs  a matrix $B$ satisfying the conditions of:  

 \setcounter{repsection}{1} 
\setcounter{repeat}{1}
\begin{reptheorem}
 (Ungraded Matrix  Structural Theorem) 
 Any differential matrix $D$ factors as $D = B {\underline D} B^{-1}$ where  
${\underline D}$ is an almost-Jordan differential matrix  and $B$ is a triangular matrix.   
\end{reptheorem}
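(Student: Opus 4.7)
The plan is to prove the theorem constructively, following the standard persistent-homology reduction algorithm outlined in Appendix B.1. I would first apply column-reduction to $D$, obtaining an invertible triangular $V$ with $R = DV$ column-reduced, and for each nonzero column $j$ of $R$ let $\pi(j)$ denote the row of its column-pivot. Call a column index $j$ \emph{negative} if $R_{*,j} \neq 0$ and a row index $i$ a \emph{creator} if $i = \pi(j)$ for some negative $j$. The matrix $B$ will be built column-by-column according to this partition of indices: set $B_{*,j} = V_{*,j}$ at each negative column $j$; set $B_{*,i} = R_{*,j}$ at each creator row $i = \pi(j)$; and set $B_{*,k} = V_{*,k}$ at each remaining position $k$.

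The main obstacle, and where I expect the real work, is verifying that this prescription is well-defined: the sets of negative columns and creator rows must be disjoint, or else $B$ is overdetermined. This is the standard persistent-homology assertion that destroyers and creators are paired on disjoint basis elements, and in the ungraded setting it must be extracted directly from $D^2 = 0$. I would argue as follows. Suppose $\pi(j) = i$ and, for contradiction, $R_{*,i} \neq 0$. Since $DR = D^2 V = 0$, every column of $R$ lies in $\ker D$. Expanding $D R_{*,j} = 0$ using that $R_{*,j}$ has pivot $R_{i,j}$ at row $i$ and vanishes below, I can solve $D e_i$ as a linear combination of $D e_k$ with $k < i$. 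Substituting into the expansion $R_{*,i} = \sum_{k \le i} V_{k,i}\, D e_k$ (valid since $V$ is upper triangular), and then expressing each $D e_k$ as a linear combination of $R_{*,l}$ with $l \le k$ via $D = RV^{-1}$ (valid since $V^{-1}$ is upper triangular), I obtain $R_{*,i}$ as a linear combination of $R_{*,l}$ with $l < i$. Because the nonzero columns of $R$ are linearly independent (distinct pivot rows), this forces $R_{*,i} = 0$, contradicting the assumption.

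Once this lemma is in hand, the rest is essentially bookkeeping. Each column of $B$ is supported in rows $\le$ its column index, since $V$ is upper triangular and $R_{*,j}$ has its pivot at row $i = \pi(j)$ with no nonzero entries below, so $B$ is upper triangular. The diagonal of $B$ consists of entries of the form $V_{j,j}$ or $R_{\pi(j),j}$, each nonzero, so $B$ is invertible. Finally, $\underline{D} = B^{-1}DB$ is computed column-by-column: for negative $j$, $DB_{*,j} = DV_{*,j} = R_{*,j} = B_{*,\pi(j)}$, so the $j$-th column of $\underline{D}$ is $e_{\pi(j)}$; for creator rows $i = \pi(j)$, $DB_{*,i} = DR_{*,j} = D^2 V_{*,j} = 0$; and for remaining $k$, $DB_{*,k} = DV_{*,k} = R_{*,k} = 0$. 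Thus $\underline{D}$ has a $1$ at each position $(\pi(j), j)$ and zeros elsewhere, making it a Boolean quasi-monomial differential matrix; a permutation pairing each $j$ with $\pi(j)$ into adjacent positions exhibits $\underline{D}$ as almost-Jordan.
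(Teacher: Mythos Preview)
Your proof is correct and follows essentially the same route as the paper's Appendix B.2: column-reduce $D$ to get $R = DV$, then build $B = \hat V$ by overwriting the columns of $V$ at creator positions with the corresponding nonzero columns of $R$, and verify that $\underline D = B^{-1}DB$ is the pivot matrix of $R$.

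The one substantive difference is how you handle the disjointness of creators and destroyers. You prove it by an explicit linear-algebra computation: assuming $R_{*,i} \ne 0$ with $i = \pi(j)$, you express $De_i$ in terms of $De_k$ for $k < i$ via $DR_{*,j}=0$, then rewrite $R_{*,i}$ as a combination of earlier columns of $R$, forcing it to vanish. The paper instead organizes its $\hat V$ so that it is well-defined by fiat (creator columns get $R$-columns, all others keep $V$-columns), and then recovers the disjointness indirectly by a rank-counting argument in Lemma~\ref{redifm}: since $D\hat V$ is obtained from $R$ by zeroing the creator columns and yet must still have $\operatorname{rank} D$ nonzero columns, none of the zeroed columns could have been nonzero to begin with. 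Your argument is more hands-on and self-contained; the paper's is shorter and exploits the fact that both $V$ and $\hat V$ are column-reductions of the same $D$. Either way the conclusion is the same.
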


\noindent    
The matrix formulation of the  {\it standard algorithm} constructs a matrix $B = \hat V$ from a 
column-reduction $V$ of a differential matrix $D$, as discussed  in \cite{DMV,BCY} for 
${\mathbb F} = {\mathbb Z}/2 {\mathbb Z}$. 
Since $R = DV$ is column-reduced,  there exists at most one nonzero column of $R$ that has its column-pivot in row $k$.     
Here $1 \le k \le m$ where $m$ is the number of rows of the square matrix $D$.   
$\hat V$ is constucted one column at a time using the following rule:   

\begin{itemize}[leftmargin=*]
\item If there exists a nonzero column of $R$ that has its column-pivot in row $k$, then column $k$ of $\hat V$ is 
 equal to this column of $R$.   
\item   If there does not exist a nonzero column of $R$ that has its column-pivot in row $k$, then column 
$k$ of $\hat V$ is  equal to column $k$ of $V$.  
\end{itemize} 

\noindent  The matrix $\hat V$ is invertible triangular, because each column is nonzero and has its column-pivot on the diagonal.  

Unlike its progenitor $V$, the matrix $\hat V$ contains all of the nonzero columns of $R$.  
We introduce the ``pivot matrix" of $R$ to encode the combinatorial data needed 
to recover  $R$.    
For any column-reduced matrix $M$, the {\it pivot matrix} of $M$ is constructed by  
replacing every column-pivot of $M$ with $1$, and every other nonzero entry of $M$ with $0$.  
It follows that the pivot matrix is is Boolean and quasi-monomial (Appendix \ref{appendixA}).

\begin{lemma}
\label{redif2} Let  $\underline D$ be the pivot matrix of  $R$. 
Then  $\hat V \underline D = R$. 
\end{lemma}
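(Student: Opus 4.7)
The plan is to verify the matrix equation $\hat V \underline D = R$ column-by-column; no appeal to the differential property $D^2 = 0$ or to any deeper structure of the factorization $R = DV$ is needed, only the definitions of $\underline D$ and $\hat V$ together with the column-reduced shape of $R$.

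First I would unpack what column $j$ of $\underline D$ looks like. By the definition of the pivot matrix of $R$, it is the zero vector when column $j$ of $R$ is zero, and it is the standard basis vector $e_k$ when column $j$ of $R$ is nonzero with its column-pivot located in row $k$. Consequently column $j$ of the product $\hat V \underline D$ is either the zero vector or column $k$ of $\hat V$.

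Next I would match these two cases against column $j$ of $R$. The zero case is immediate. In the nonzero case, $j$ is a column of $R$ whose column-pivot lies in row $k$; the column-reduced hypothesis on $R$ guarantees that $j$ is the \emph{unique} such column, so the first clause in the defining rule for $\hat V$ triggers and assigns column $k$ of $\hat V$ to be a verbatim copy of this very column $j$ of $R$. Assembling both cases gives column $j$ of $\hat V \underline D$ equals column $j$ of $R$ for every $j$, which is the desired identity.

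The main subtlety — and really the only one — is confirming that the defining rule for $\hat V$ is unambiguous, i.e.\ that the first clause cannot offer two competing assignments for the same column index $k$. This is exactly where the column-reduced property of $R$ earns its keep: since each row of $R$ contains at most one column-pivot, at most one column of $R$ has its pivot in row $k$, and the rule is consistent. Once this observation is recorded, the remainder of the argument is pure bookkeeping and the lemma follows. I would also remark in passing that the ``otherwise'' clause in the construction of $\hat V$ plays no role in establishing the identity $\hat V \underline D = R$; its purpose is only to make $\hat V$ invertible and triangular by filling the columns of $\hat V$ not pinned down by a column-pivot of $R$.
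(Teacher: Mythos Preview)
Your proof is correct and follows essentially the same column-by-column verification as the paper's own proof, with the same key observation that column $k$ of $\hat V$ was defined to be the unique column of $R$ with pivot in row $k$. Your additional remarks about well-definedness and the role of the ``otherwise'' clause are accurate elaborations that the paper's terse proof leaves implicit.
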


\begin{proof}  
Supposing column $k$ of $R$ is nonzero,  
let $j$ be the row number of the unique nonzero entry in column $k$ of $\underline D$.  
Then by construction, column $j$ of $\hat V$ is equal to column $k$ of $R$.  
\qed \end{proof}

But like its progenitor $V$, the matrix $\hat V$ is a column-reduction of the differential $D$:

\begin{lemma}  
\label{redifm}
$D \hat V = R$.   
\end{lemma}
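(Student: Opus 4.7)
The plan is to verify the equality $D\hat{V} = R$ one column at a time, exploiting the two-case definition of $\hat{V}$. Writing $v_k$ for column $k$ of $V$ and $R_k$ for column $k$ of $R$, so that $R_k = Dv_k$, I first dispose of the easy case: if no column of $R$ has its column-pivot in row $k$, then by construction column $k$ of $\hat{V}$ equals $v_k$, and so column $k$ of $D\hat{V}$ equals $Dv_k = R_k$, as required.

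The substantive case is when some column $\ell$ of $R$ has its column-pivot in row $k$, so that column $k$ of $\hat{V}$ is the vector $R_\ell$. Then column $k$ of $D\hat{V}$ equals $D R_\ell = D^2 v_\ell = 0$, using $D^2 = 0$. To finish, I therefore need to show that column $k$ of $R$ is itself zero in this situation. This subsidiary claim --- that whenever $k$ is the pivot row of some column of $R$, the $k$-th column of $R$ vanishes --- is the main obstacle, and is the only nontrivial content of the lemma.

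To establish the claim I would argue as follows. From $DR = D^2 V = 0$, every column of $R$ lies in $\ker D$. Applying this to $R_\ell$ and expanding $0 = DR_\ell = \sum_{i\le k} R_{i,\ell}\, De_i$ --- where $R_{i,\ell} = 0$ for $i>k$ by definition of column-pivot, and $R_{k,\ell}\ne 0$ --- solves for $De_k$ as a linear combination of $De_1,\ldots,De_{k-1}$. Because $V$ is triangular and invertible, the columns $v_1,\ldots,v_{k-1}$ span the same subspace as $e_1,\ldots,e_{k-1}$, so applying $D$ yields $\langle De_1,\ldots,De_{k-1}\rangle = \langle R_1,\ldots,R_{k-1}\rangle$. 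Now expanding $R_k = Dv_k = \sum_{i\le k} V_{i,k}\, De_i$ and substituting the expression for $De_k$ places $R_k$ in the span of $R_1,\ldots,R_{k-1}$. Finally, the nonzero columns of $R$ are linearly independent --- their column-pivots occupy distinct rows, and such vectors are automatically independent by a downward induction on the largest pivot row --- so no nonzero $R_k$ can lie in the span of the preceding columns. Hence $R_k = 0$ in the substantive case, and the column-by-column verification is complete.
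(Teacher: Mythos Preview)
Your proof is correct, and it takes a genuinely different route from the paper's. You argue column by column and, in the substantive case, use $D^2=0$ to show that $De_k$ lies in the span of $De_1,\dots,De_{k-1}$; triangularity of $V$ then places $R_k$ in the span of $R_1,\dots,R_{k-1}$, and linear independence of the nonzero columns of $R$ forces $R_k=0$. The paper instead argues globally: since every column of $R$ lies in $\ker D$, the matrix $\hat R:=D\hat V$ is obtained from $R$ by replacing certain columns with zero columns; this keeps $\hat R$ column-reduced, and since $\hat V$ is invertible, the nonzero columns of $\hat R$ form a basis of $\image D$ just as those of $R$ do. A rank count then shows no nonzero column was actually replaced, so $\hat R=R$. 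Your argument is more hands-on and isolates exactly why the specific column $R_k$ vanishes, while the paper's counting argument is shorter and avoids tracking individual pivot relations; both rely on the same underlying fact (Corollary~\ref{redpro}) that the nonzero columns of a column-reduced matrix are independent.
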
  

\begin{proof}  The triangular matrix $V$ is a column-reduction of $D$, as per Definition \ref{redu}. 
We now show that the triangular matrix $\hat V$ is also a column-reduction of $D$.  
Recall that $\hat V$ is  constructed as a modification of $V$, 
by replacing a  (possibly empty) subset of the columns of $V$ with columns of $R$.  
Every column of $R$ is in $\ker D$, because $D R = D^2 V = 0$.  
So $\hat R := D \hat V$ is constructed as a modification of $R = D V$, 
by replacing a (possibly empty) subset  of the nonzero columns of $R$ by zero columns.   
This particular modification preserves the column-reduced property, so 
$\hat R = D \hat V$ is column-reduced.  It follows that  $\hat V$ is a column-reduction of $D$, 
as per Definition \ref{redu}.   

Since $\hat V$ and $V$ are  column-reductions of the same matrix $D$, we see from 
Corollary \ref{redpro}  that  $\hat R$ and  $R$ have the same number (namely $\rank D$) of nonzero columns.   
It follows that in the construction of $\hat R$ as a modification of $R$, {\it none} of the nonzero columns of $R$ can 
be replaced 
by zero columns.  
This establishes the equality $\hat R = R$, and the conclusion 
$D \hat V =  R$ follows. 
\qed \end{proof}

We can now complete the constructive proof of Theorem \ref{prenon}:

\begin{proof}  Letting $B = \hat V$, Lemmas \ref{redif2} and \ref{redifm}   give
$B {\underline D} = R = D B$.   
The pivot matrix ${\underline D} =B^{-1} D B$ is a differential matrix, since it is 
conjugate to the differential matrix $D$.    
It only remains to check that the differential matrix ${\underline D}$ is  almost-Jordan.  
This requires constructing a permutation matrix $P$ such that  $P^{-1} {\underline D} P$ is Jordan. 
Since the differential matrix ${\underline D}$ is furthermore  Boolean and quasi-monomial, 
$P$ can be constructed by the procedure previously explained  
in Example \ref{nuex}.       
\qed \end{proof} 

An immediate  corollary of the proof is the important and generally known fact that the almost-Jordan differential $\underline D$ can be easily 
constructed as the pivot matrix of the column-reduced matrix $R = DV$.  
Furthermore, while $R$ may depend on the choice of column-reduction $V$, Corollary \ref{eunuch} guarantees 
that the pivot matrix $\underline D$ is an invariant of $D$  (which we call the {\it persistence canonical form} of $D$ in 
Section \ref{subsection1.2}) 
independent of the  choice of column-reduction $V$.  
The matrix $\underline D$ contains all of the data for the multiplicities of the summands in a decomposition,  
and this is independent of the choice of decomposition because of the Krull-Schmidt property.  
The data required to compute a particular decomposition is 
conveniently encoded by in the matrix $\hat V$, 
which is a  column-reduction  with additional special properties.    
These points are illustrated in the example at the 
end of the section.  
In the language of the standard framework, one says that the ``barcodes" are contained in $\underline D$,  and 
the ``creators and destroyers" of persistent homology are contained in $\hat V$.   

We also  note that the invertible triangular  matrix $B = \hat V$ produced by the standard algorithm is not in general 
normalized (see the discussion in Section \ref{subsection1.2} following Theorem \ref{prenon}).  
But it is easy 
to construct a diagonal matrix $T$  such that the invertible diagonal matrix  $B = \hat V T$ is normalized. 
This  will also be illustrated in the example at the end of the section.

The graded case is an easy modification:  

 \setcounter{repsection}{1} 
\setcounter{repeat}{3}
\begin{reptheorem}
(Matrix  Structural Theorem) 
 Any block-superdiagonal differential matrix $D$ factors as $D = B {\underline D} B^{-1}$ where  
${\underline D}$ is a block-superdiagonal almost-Jordan differential matrix  and $B$ is a block-diagonal triangular matrix. 
\end{reptheorem}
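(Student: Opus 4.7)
The plan is to rerun the constructive proof of the ungraded Matrix Structural Theorem~\ref{prenon} on the block-superdiagonal differential $D$ and verify that each step automatically respects the grading. Concretely, I would first apply the column-reduction algorithm of Appendix~\ref{apendixB.1} to produce an invertible triangular $V$ with $R = DV$ column-reduced, and then form $\hat V$ by the standard rule: column $k$ of $\hat V$ equals the (unique, if it exists) nonzero column of $R$ whose column-pivot lies in row $k$, and otherwise equals column $k$ of $V$. Setting $B = \hat V$ and taking $\underline D$ to be the pivot matrix of $R$, Lemmas~\ref{redif2} and~\ref{redifm} yield $DB = B\underline D$ with $\underline D$ a Boolean quasi-monomial differential matrix, hence almost-Jordan. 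This reproduces the ungraded conclusion.

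The additional content needed for the graded case is to verify that $B = \hat V$ is block-diagonal and that $\underline D$ is block-superdiagonal. The key observation is that, because $D$ is block-superdiagonal, a column-pivot sitting in row $i$ of column $k$ forces the basis element indexed by $k$ to have degree exactly one more than the basis element indexed by $i$. When the algorithm then adds a multiple of column $k$ to some later column $j$ in order to annihilate a nonzero entry of column $j$ in row $i$, column $j$ must already have a nonzero entry in row $i$, which again forces $j$ to lie in the same degree block as $k$. Consequently every elementary column operation is confined to a single degree block, so the column-reduction $V$ comes out block-diagonal, and $R = DV$ inherits the block-superdiagonal structure of $D$, as does its pivot matrix $\underline D$.

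The passage from $V$ to $\hat V$ preserves block-diagonality for the same reason: any replacement column is a column of $R = DV$ whose sole pivot row is some row $k$, arising as $D$ applied to a pure-degree-$n$ column of $V$ (say), and therefore has support confined to the degree-$(n{-}1)$ block, which is exactly the degree block of row $k$ in which the column is being installed. Finally, if one wants the normalized form, scaling those columns of $\hat V$ that came from $V$ (i.e.\ the columns corresponding to zero columns of $\underline D$) by an appropriate diagonal factor turns $B$ into a block-diagonal triangular matrix with unit diagonal on those columns, preserving the factorization.

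I expect the only genuine obstacle to be the bookkeeping in the $\hat V$-construction step, since that is the one place where columns of $V$ are mixed with columns of $R$ and block-diagonality could plausibly fail; the degree-matching argument in the previous paragraph resolves it. Beyond this, the argument is simply the ungraded one re-indexed by degree, and the block-superdiagonal form of $\underline D$ is then automatic from the block-superdiagonal form of $R$.
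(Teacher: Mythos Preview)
Your proposal is correct and follows essentially the same route as the paper's proof in Appendix~B.2: rerun the ungraded construction and observe that column-reduction of a block-superdiagonal $D$ yields a block-diagonal $V$, and that the standard algorithm then produces a block-diagonal $\hat V$. You supply the degree-matching justification for both steps (why elementary column operations stay within a degree block, and why each replacement column of $R$ lands in the correct diagonal block of $\hat V$), which the paper asserts but does not spell out; otherwise the arguments coincide.
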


\begin{proof}  Let $D$ be a block-superdiagonal differential matrix $D$.  
Then the invertible triangular column-reduction marix $V$ produced by a reduction algorithm, 
such as \cite{ELZ,ZC1,ZC2} or our Appendix B.1, is block-diagonal.  
If $V$ is block-diagonal, then so is the  invertible triangular matrix $B = \hat V$ constructed by 
the standard algorithm from $V$ and $R = D V$.      
\qed \end{proof}

The following example of a standard algorithm computation illustrates both block-structure and normalization.  

\begin{example} We  work with  block-superdiagonal differential $D: {\mathbb Q}^7 \to {\mathbb Q}^7$ of Example \ref{exfilt}, 
which is presented below with a column augmentation by the identity matrix $I$.   The identity matrix is block-diagonal with 
respect to the grading structure inherited from $D$.  We first compute a column-reduction of $D$:   
%


\begin{equation*}
{D \over I} = 
\begin{tikzpicture}[baseline={([yshift=-1.4ex]current bounding box.center)}]

[decoration=brace]
\tikzset{
    node style ge/.style={circle,minimum size=.75cm},
}
\pgfdeclarelayer{background}
\pgfdeclarelayer{foreground}
\pgfsetlayers{background,main,foreground}
\matrix (A) [matrix of math nodes,
             nodes = {node style ge},
             left delimiter  = {[}, 
             right delimiter = {]}, 
             inner sep=-2pt,
		row sep=-.3cm,
		column sep=-.25cm
             ]
{
0 & 0 &  0 & {-1} & 0 &{-1}  & 0 \\
0 & 0 & 0 & 1 & -1 & 0 & 0 \\
0 & 0 &  0 &  0 & 1 &   1 &  0 \\
0 & 0 & 0 & 0 & 0 & 0 &1 \\
0 & 0 & 0 & 0 & 0 & 0 & 1 \\
0 & 0 & 0 & 0 & 0 & 0 & {-1} \\
0 & 0 & 0 & 0 & 0 & 0 & 0 \\
1 & 0 & 0 & 0 & 0 & 0 & 0 \\
0 & 1 & 0 & 0 & 0 & 0 & 0 \\
0 & 0 & 1 & 0 & 0 & 0 &  0 \\
0 & 0 & 0 & 1 & 0 & 0 & 0 \\
0 & 0 & 0 & 0 & 1 & 0 & 0 \\
0 & 0 & 0 & 0 & 0 & 1 & 0 \\
0 & 0 & 0 & 0 & 0 & 0 & 1 \\
};

\node (top1) at ($(A-1-3.north)!0.5!(A-1-4.north)$) {};
\node (top2) at ($(A-1-6.north)!0.5!(A-1-7.north)$) {};

\node (left1) at ($(A-3-1.south west)!0.5!(A-4-1.north west)-(.15,0)$) {};
\node (left2) at ($(A-6-1.south west)!0.5!(A-7-1.north west)-(.15,0)$) {};

\node (leftmid) at ($(A-7-1.south west)!0.5!(A-8-1.north west)-(.15,0)$) {};

\node (right1) at ($(A-3-7.south east)!0.5!(A-4-7.north east)+(.15,0)$) {};
\node (right2) at ($(A-6-7.south east)!0.5!(A-7-7.north east)+(.15,0)$) {};

\node (rightmid) at ($(A-7-7.south east)!0.5!(A-8-7.north east)+(.15,0)$) {};

\node (a_1) at ($(A-3-3.south east)!0.5!(A-3-4.south west)$) {};
\node (a_2) at ($(A-4-3.north east)!0.5!(A-4-4.north west)$) {};
\node (b_1) at ($(A-3-6.south east)!0.5!(A-3-7.south west)$) {};
\node (b_2) at ($(A-4-6.north east)!0.5!(A-4-7.north west)$) {};

\node (c_1) at ($(A-10-3.south east)!0.5!(A-10-4.south west)$) {};
\node (c_2) at ($(A-11-3.north east)!0.5!(A-11-4.north west)$) {};
\node (d_1) at ($(A-13-6.south east)!0.5!(A-13-7.south west)$) {};
\node (d_2) at ($(A-14-6.north east)!0.5!(A-14-7.north west)$) {};

\node (mid33) at ($(a_1)!0.5!(a_2)$) {};
\node (mid36) at ($(b_1)!0.5!(b_2)$) {};

\node (mid10_3) at ($(c_1)!0.5!(c_2)$) {};
\node (mid13_6) at ($(d_1)!0.5!(d_2)$) {};

\node (corner_br) at ($(A-14-7.south)!0.5!(A-14-7.east)$) {};

\draw (leftmid) -- (rightmid);

\begin{pgfonlayer}{background}


\fill[black!20!white] (mid33) rectangle (top2.south);
\fill[black!20!white] (mid36) rectangle (right2.west);

\fill[black!20!white] (leftmid.east) rectangle (mid10_3);
\fill[black!20!white] (mid13_6) rectangle (mid10_3);
\fill[black!20!white] (mid13_6) rectangle (corner_br.east);

\end{pgfonlayer}

\end{tikzpicture}
\mapsto 
\cdots \mapsto
 \begin{tikzpicture}[baseline={([yshift=-1.4ex]current bounding box.center)}]

[decoration=brace]
\tikzset{
    node style ge/.style={circle,minimum size=.75cm},
}
\pgfdeclarelayer{background}
\pgfdeclarelayer{foreground}
\pgfsetlayers{background,main,foreground}
\matrix (A) [matrix of math nodes,
             nodes = {node style ge},
             left delimiter  = {[}, 
             right delimiter = {]}, 
              inner sep=-2pt,
		row sep=-.3cm,
		column sep=-.25cm
             ]
{
0 & 0 &  0 & {-1} & {-1} & 0  & 0 \\
0 & 0 & 0 & 1 &0 & 0 & 0 \\
0 & 0 &  0 &  0 & 1 &  0 &  0 \\
0 & 0 & 0 & 0 & 0 & 0 &1 \\
0 & 0 & 0 & 0 & 0 & 0 & 1 \\
0 & 0 & 0 & 0 & 0 & 0 & {-1} \\
0 & 0 & 0 & 0 & 0 & 0 & 0 \\
1 & 0 & 0 & 0 & 0 & 0 & 0 \\
0 & 1 & 0 & 0 & 0 & 0 & 0 \\
0 & 0 & 1 & 0 & 0 & 0 &  0 \\
0 & 0 & 0 & 1 & 1 & -1 & 0 \\
0 & 0 & 0 & 0 & 1 & -1 & 0 \\
0 & 0 & 0 & 0 & 0 & 1 & 0 \\
0 & 0 & 0 & 0 & 0 & 0 & 1 \\
};

\node (top1) at ($(A-1-3.north)!0.5!(A-1-4.north)$) {};
\node (top2) at ($(A-1-6.north)!0.5!(A-1-7.north)$) {};

\node (left1) at ($(A-3-1.south west)!0.5!(A-4-1.north west)-(.15,0)$) {};
\node (left2) at ($(A-6-1.south west)!0.5!(A-7-1.north west)-(.15,0)$) {};

\node (leftmid) at ($(A-7-1.south west)!0.5!(A-8-1.north west)-(.15,0)$) {};

\node (right1) at ($(A-3-7.south east)!0.5!(A-4-7.north east)+(.15,0)$) {};
\node (right2) at ($(A-6-7.south east)!0.5!(A-7-7.north east)+(.15,0)$) {};

\node (rightmid) at ($(A-7-7.south east)!0.5!(A-8-7.north east)+(.15,0)$) {};

\node (a_1) at ($(A-3-3.south east)!0.5!(A-3-4.south west)$) {};
\node (a_2) at ($(A-4-3.north east)!0.5!(A-4-4.north west)$) {};
\node (b_1) at ($(A-3-6.south east)!0.5!(A-3-7.south west)$) {};
\node (b_2) at ($(A-4-6.north east)!0.5!(A-4-7.north west)$) {};

\node (c_1) at ($(A-10-3.south east)!0.5!(A-10-4.south west)$) {};
\node (c_2) at ($(A-11-3.north east)!0.5!(A-11-4.north west)$) {};
\node (d_1) at ($(A-13-6.south east)!0.5!(A-13-7.south west)$) {};
\node (d_2) at ($(A-14-6.north east)!0.5!(A-14-7.north west)$) {};

\node (mid33) at ($(a_1)!0.5!(a_2)$) {};
\node (mid36) at ($(b_1)!0.5!(b_2)$) {};

\node (mid10_3) at ($(c_1)!0.5!(c_2)$) {};
\node (mid13_6) at ($(d_1)!0.5!(d_2)$) {};

\node (corner_br) at ($(A-14-7.south)!0.5!(A-14-7.east)$) {};

\draw (leftmid) -- (rightmid);

\begin{pgfonlayer}{background}


\fill[black!20!white] (mid33) rectangle (top2.south);
\fill[black!20!white] (mid36) rectangle (right2.west);

\fill[black!20!white] (leftmid.east) rectangle (mid10_3);
\fill[black!20!white] (mid13_6) rectangle (mid10_3);
\fill[black!20!white] (mid13_6) rectangle (corner_br.east);

\end{pgfonlayer}

\end{tikzpicture}
= {R \over V}
\end{equation*}


\noindent The result of the computation is a factorization $R = DV$, where 
$R$ is column-reduced and $V$ is invertible triangular (and unipotent). 
 The intervening steps are omitted for brevity.  
 The block-superdiagonal almost-Jordan differential $\underline D$ is now easily computed as the pivot matrix of $R$, 
 by setting every column-pivot to $1$ and every other nonzero entry to $0$:
 \begin{equation*}
 {\underline D} =
 \begin{tikzpicture}[baseline={([yshift=-1.4ex]current bounding box.center)}]

[decoration=brace]

\tikzset{
    node style ge/.style={circle,minimum size=.75cm},
}
\pgfdeclarelayer{background}
\pgfdeclarelayer{foreground}
\pgfsetlayers{background,main,foreground}
\matrix (A) [matrix of math nodes,
             nodes = {node style ge},
             left delimiter  = {[}, 
             right delimiter = {]}, 
              inner sep=-2pt,
		row sep=-.3cm,
		column sep=-.25cm
             ]
{
  0 & 0 &  0 &  {0} & 0 & {0}  & 0 \\
  0 & 0 & 0 & 1 & 0 & 0 & 0 \\
  0 & 0 &  0 &   0 & 1 &   0 &  0 \\
  0 & 0 & 0 & 0 & 0 & 0 & 0 \\
 0 & 0 & 0 & 0 & 0 & 0 & 0 \\
 0 & 0 & 0 & 0 & 0 & 0 & {1} \\
 0 & 0 & 0 & 0 & 0 & 0 & 0 \\
};


\node (top1) at ($(A-1-3.north)!0.5!(A-1-4.north)$) {};
\node (top2) at ($(A-1-6.north)!0.5!(A-1-7.north)$) {};

\node (bot1) at ($(A-7-3.south)!0.5!(A-7-4.south)$) {};
\node (bot2) at ($(A-7-6.south)!0.5!(A-7-7.south)$) {};

\node (left1) at ($(A-3-1.south west)!0.5!(A-4-1.north west)-(.15,0)$) {};
\node (left2) at ($(A-6-1.south west)!0.5!(A-7-1.north west)-(.15,0)$) {};

\node (right1) at ($(A-3-7.south east)!0.5!(A-4-7.north east)+(.15,0)$) {};
\node (right2) at ($(A-6-7.south east)!0.5!(A-7-7.north east)+(.15,0)$) {};

\node (a_1) at ($(A-3-3.south east)!0.5!(A-3-4.south west)$) {};
\node (a_2) at ($(A-4-3.north east)!0.5!(A-4-4.north west)$) {};
\node (b_1) at ($(A-3-6.south east)!0.5!(A-3-7.south west)$) {};
\node (b_2) at ($(A-4-6.north east)!0.5!(A-4-7.north west)$) {};

\node (mid11) at ($(a_1)!0.5!(a_2)$) {};
\node (mid12) at ($(b_1)!0.5!(b_2)$) {};

\begin{pgfonlayer}{background}


\fill[black!20!white] (mid11) rectangle (top2.south);
\fill[black!20!white] (mid12) rectangle (right2.west);

\end{pgfonlayer}

\end{tikzpicture}
.
\end{equation*}
The barcode invariants can be computed from the matrix  $\underline D$ and the filtration levels of the basis elements.  

Proceeding to compute a particular decomposition as in Example \ref{nuex}, we use the standard algorithm to construct  $\hat V$ as a modification of $V$.   
Each nonzero column of $R$ replaces the column of 
 $V$ that has its column-pivot in the same row.  Then $\hat V$ inherits the block-diagonal 
 structure of $V$:  
 %


\begin{equation*}
V=
\begin{tikzpicture}[baseline={([yshift=-.5ex]current bounding box.center)}]

[decoration=brace]
\tikzset{
    node style ge/.style={circle,minimum size=.75cm},
}
\pgfdeclarelayer{background}
\pgfdeclarelayer{foreground}
\pgfsetlayers{background,main,foreground}
\matrix (A) [matrix of math nodes,
             nodes = {node style ge},
             left delimiter  = {[}, 
             right delimiter = {]}, 
             inner sep=-2pt,
		row sep=-.3cm,
		column sep=-.25cm
             ]
{
1 & 0 & 0 & 0 & 0 & 0 & 0 \\
0 & 1 & 0 & 0 & 0 & 0 & 0 \\
0 & 0 & 1 & 0 & 0 & 0 &  0 \\
0 & 0 & 0 & 1 & 1 & -1 & 0 \\
0 & 0 & 0 & 0 & 1 & -1 & 0 \\
0 & 0 & 0 & 0 & 0 & 1 & 0 \\
0 & 0 & 0 & 0 & 0 & 0 & 1 \\
};

\node (top1) at ($(A-1-3.north)!0.5!(A-1-4.north)$) {};
\node (top2) at ($(A-1-6.north)!0.5!(A-1-7.north)$) {};

\node (bot1) at ($(A-7-3.south)!0.5!(A-7-4.south)$) {};
\node (bot2) at ($(A-7-6.south)!0.5!(A-7-7.south)$) {};

\node (left1) at ($(A-3-1.south west)!0.5!(A-4-1.north west)-(.15,0)$) {};
\node (left2) at ($(A-6-1.south west)!0.5!(A-7-1.north west)-(.15,0)$) {};

\node (right1) at ($(A-3-7.south east)!0.5!(A-4-7.north east)+(.15,0)$) {};
\node (right2) at ($(A-6-7.south east)!0.5!(A-7-7.north east)+(.15,0)$) {};

\node (corner_tl) at (A-1-1.north west) {};
\node (corner_br) at (A-7-7.south east) {};

\node (a_1) at ($(A-3-3.south east)!0.5!(A-3-4.south west)$) {};
\node (a_2) at ($(A-4-3.north east)!0.5!(A-4-4.north west)$) {};
\node (b_1) at ($(A-6-6.south east)!0.5!(A-6-7.south west)$) {};
\node (b_2) at ($(A-7-6.north east)!0.5!(A-7-7.north west)$) {};

\node (mid11) at ($(a_1)!0.5!(a_2)$) {};
\node (mid22) at ($(b_1)!0.5!(b_2)$) {};

\begin{pgfonlayer}{background}


\fill[black!20!white] (mid11) rectangle (mid22);
\fill[black!20!white] (mid11) rectangle (corner_tl);
\fill[black!20!white] (mid22) rectangle (corner_br);

\end{pgfonlayer}

\end{tikzpicture}
\mapsto \cdots \mapsto
 \begin{tikzpicture}[baseline={([yshift=-.5ex]current bounding box.center)}]

[decoration=brace]
\tikzset{
    node style ge/.style={circle,minimum size=.75cm},
}
\pgfdeclarelayer{background}
\pgfdeclarelayer{foreground}
\pgfsetlayers{background,main,foreground}
\matrix (A) [matrix of math nodes,
             nodes = {node style ge},
             left delimiter  = {[}, 
             right delimiter = {]}, 
              inner sep=-2pt,
		row sep=-.3cm,
		column sep=-.25cm
             ]
{
 1 & -1 &{ -1} & 0 & 0 & 0 & 0 \\
  0 & 1& 0 & 0 &0 & 0 & 0 \\
0 & 0 &1  & 0 & 0 & 0 & 0 \\
  0 & 0 & 0 & 1 & 1 & 1 & 0 \\
 0 & 0 & 0 & 0 & 1 & 1 & 0 \\
 0 & 0 & 0 & 0 & 0 &-1  & 0 \\
 0 & 0 & 0 & 0 & 0 & 0 &  1 \\
};

\node (top1) at ($(A-1-3.north)!0.5!(A-1-4.north)$) {};
\node (top2) at ($(A-1-6.north)!0.5!(A-1-7.north)$) {};

\node (bot1) at ($(A-7-3.south)!0.5!(A-7-4.south)$) {};
\node (bot2) at ($(A-7-6.south)!0.5!(A-7-7.south)$) {};

\node (left1) at ($(A-3-1.south west)!0.5!(A-4-1.north west)-(.15,0)$) {};
\node (left2) at ($(A-6-1.south west)!0.5!(A-7-1.north west)-(.15,0)$) {};

\node (right1) at ($(A-3-7.south east)!0.5!(A-4-7.north east)+(.15,0)$) {};
\node (right2) at ($(A-6-7.south east)!0.5!(A-7-7.north east)+(.15,0)$) {};

\node (corner_tl) at (A-1-1.north west) {};
\node (corner_br) at (A-7-7.south east) {};

\node (a_1) at ($(A-3-3.south east)!0.5!(A-3-4.south west)$) {};
\node (a_2) at ($(A-4-3.north east)!0.5!(A-4-4.north west)$) {};
\node (b_1) at ($(A-6-6.south east)!0.5!(A-6-7.south west)$) {};
\node (b_2) at ($(A-7-6.north east)!0.5!(A-7-7.north west)$) {};

\node (mid11) at ($(a_1)!0.5!(a_2)$) {};
\node (mid22) at ($(b_1)!0.5!(b_2)$) {};

\begin{pgfonlayer}{background}


\fill[black!20!white] (mid11) rectangle (mid22);
\fill[black!20!white] (mid11) rectangle (corner_tl);
\fill[black!20!white] (mid22) rectangle (corner_br);

\end{pgfonlayer}

\end{tikzpicture}
= {\hat V}
\end{equation*}

\noindent  We list the columns of $\hat V$ that are equal to columns of $R$;   this data is 
also encoded by the nonzero entries of the pivot matrix  $\underline D$:   
\begin{itemize}[leftmargin=*]
\item	Column 2 of $\hat V$ is equal to column 4 of $R$; row 2 column 4 of $\underline D$ has entry $1$.  
\item Column 3 of $\hat V$ is equal to column 5 of $R$; row 3 column 5 of $\underline D$ has entry $1$.  
\item Column 6 of $\hat V$ is equal to column 7 of $R$; row 6 column 7 of $\underline D$ has entry $1$.    
\end{itemize}
\noindent Each of the remaining columns of $\hat V$ is equal to the corresponding column of $V$.  
%
One may now check by matrix multiplication that  ${\hat V^{-1}} D \, \hat V={\underline D}$, 
where ${\underline D}$ is the pivot matrix of $R$ as above. 

The invertible triangular matrix $\hat V$ is not normalized:  
column 6 of $\hat V$ corresponds to a zero column of ${\underline D}$, but its 
diagonal entry is not equal to  
$1$.  We can normalize by scalar multiplication of the appropriate columns.  
Let $T$ be the diagonal matrix with  
$1$ in the first five diagonal entries and $-1$ in the last two.  
Then the invertible triangular matrix $B = {\hat V}T$ is normalized, and 
this is the matrix that  appears in Example \ref{exfilt}.  
Note that $B^{-1} D B = {\underline D} =  {\hat V}^{-1} D\,  {\hat V}$ by   Corollary \ref{eunuch}. 
\end{example}




\end{document}